\numberwithin{equation}{section}
\numberwithin{figure}{section}
\theoremstyle{plain}
\newtheorem{thm}{\protect\theoremname}[section]
\theoremstyle{definition}
\newtheorem{defn}[thm]{\protect\definitionname}
\theoremstyle{plain}
\newtheorem{prop}[thm]{\protect\propositionname}
\theoremstyle{remark}
\newtheorem{rem}[thm]{\protect\remarkname}
\theoremstyle{plain}
\newtheorem{cor}[thm]{\protect\corollaryname}
\theoremstyle{plain}
\newtheorem{lem}[thm]{\protect\lemmaname}
\providecommand{\corollaryname}{Corollary}
\providecommand{\definitionname}{Definition}
\providecommand{\lemmaname}{Lemma}
\providecommand{\propositionname}{Proposition}
\providecommand{\remarkname}{Remark}
\providecommand{\theoremname}{Theorem}
\begin{document}
\title{$L^{p}$-Expander Graphs}
\author{Amitay Kamber}
\thanks{Institute of Mathematics, The Hebrew University of Jerusalem, amitay.kamber@mail.huji.ac.il}
\begin{abstract}
We discuss how graph expansion is related to the behavior of $L^{p}$-functions
on the covering tree. We show that the non-trivial eigenvalues of
the adjacency operator on a $(q+1)$-regular graph are bounded by
$q^{\nicefrac{1}{p}}+q^{\nicefrac{\left(p-1\right)}{p}}$ -- the
$L^{p}$-norm of the operator on the covering tree -- if and only
if properly averaged lifts of functions from the graph to the tree
lie in $L^{p+\epsilon}$ for every $\epsilon>0$. We generalize the
result to operators on edges and to bipartite graphs.

The work is based on a combinatorial interpretation of representation-theoretic
ideas.
\end{abstract}

\maketitle

\section{Introduction}

The goal of this paper is to put on record some claims about expander
graphs, which characterize them by the properties of $L^{p}$-functions
on their infinite covering tree. Let us first set some notations.

Fix $2\le q\in\mathbb{N}$. Let $X$ be a finite, connected, $\left(q+1\right)$-regular
graph. Let $T$ be the $\left(q+1\right)$-regular tree which is the
universal cover of $X$ and let $\pi\colon T\rightarrow X$ be a covering
map. Let $v_{0}\in V_{T}$ be a vertex in $T$. For every function
$f\colon V_{X}\rightarrow\mathbb{C}$ (i.e.~a function defined on
the vertices of $X$) let $\tilde{f}\colon V_{T}\rightarrow\mathbb{C}$
be the lift of $f$ to $T$, i.e.~$\tilde{f}=f\circ\pi$. Let $\rho_{v_{0}}(\tilde{f})(v)=\frac{1}{q_{v}}\sum_{v'}\tilde{f}(v')$
when $v'$ runs over the sphere in $T$ of radius $d=dist(v_{0},v)$
around $v_{0}$ and $q_{v}$ is the size of the sphere (i.e.~$q_{v_{0}}=1$
and for $v\ne v_{0}$, $q_{v}=\left(q+1\right)q^{d(v,v_{0})-1}$).
Let $L_{0}^{2}(X)=\left\{ f\colon V_{X}\rightarrow\mathbb{C}:\,\sum_{v\in V_{X}}f(v)=0\right\} $
be the subspace orthogonal to the constant function. If $X$ is not
bipartite, let $L_{00}^{2}(V_{X})=L_{0}^{2}(V_{X})$, otherwise let
$L_{00}^{2}(V_{X})$ be the subspace of $f\in L_{0}^{2}(V_{X})$ for
which the sum of $f$ is $0$ over each of the sides of $X$. We wish
to understand the behavior at infinity of $\rho_{v_{0}}(\tilde{f})$
for $\tilde{f}\in L_{00}^{2}(V_{X})$. The function $\tilde{f}$ itself
is periodic, and therefore if $f\ne0$ it is not in $L^{p}(V_{T})$
for any $p<\infty$.

Let $A\colon L^{2}(V_{X})\rightarrow L^{2}(V_{X})$ be the vertex
adjacency operator, $Af(v)=\sum_{v'\sim v}f(v')$. The subspace $L_{00}^{2}(V_{X})$
is the space of functions orthogonal to the eigenvectors with trivial
eigenvalues $\pm\left(q+1\right)$ of $A$. Let $\lambda(X)$ be the
largest absolute value of an eigenvalue of $A$ on $L_{00}^{2}(V_{X})$.
It is standard that $\lambda(X)<q+1$ (see Subsection \ref{subsec:Action-on-Finite-graphs})
and that the graph $X$ is a good expander if $\lambda(X)$ is small
(see \cite{hoory2006expander}). In particular, $X$ is called Ramanujan
if $\lambda(X)\le2\sqrt{q}$. By the Alon-Boppana theorem (\cite{nilli1991second})
Ramanujan graphs are considered the best expanders.

Notice that the function $\hat{\lambda}(p)=q^{\nicefrac{1}{p}}+q^{\nicefrac{\left(p-1\right)}{p}}$
for $p\in[2,\infty]$ is increasing with $\hat{\lambda}(2)=2\sqrt{q}$
and $\hat{\lambda}(\infty)=q+1$.
\begin{thm}
\label{thm:Basic graph claim}For $p\ge2$, $\lambda(X)\le q^{\nicefrac{1}{p}}+q^{\nicefrac{\left(p-1\right)}{p}}$
if and only if for every $f\in L_{00}^{2}(V_{X})$ and $v_{0}\in T$,
$\rho_{v_{0}}\left(\tilde{f}\right)\in L^{p+\epsilon}(V_{T}$) for
every $\epsilon>0$.

In particular, $X$ is a Ramanujan graph (i.e.~$\lambda(X)\le2\sqrt{q}$)
if and only if for every $f\in L_{00}^{2}(V_{X})$ and $v_{0}\in T$,
$\rho_{v_{0}}\left(\tilde{f}\right)\in L^{2+\epsilon}(V_{T})$ for
every $\epsilon>0$.
\end{thm}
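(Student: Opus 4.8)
The plan is to reduce the statement to a spectral computation about how the radial averaging operator $\rho_{v_0}$ interacts with the eigenvalues of the adjacency operator $A$. The key observation is that if $f$ is an eigenfunction of $A$ with eigenvalue $\lambda$, i.e. $Af = \lambda f$, then its radial average $\rho_{v_0}(\tilde f)$ should be (up to the value $f(\pi(v_0))$) a specific *spherical function* on the tree $T$ depending only on $\lambda$. Concretely, I would set $\phi(v) = \rho_{v_0}(\tilde f)(v)$ and write $\phi_d$ for its common value on the sphere of radius $d$ around $v_0$. The lift $\tilde f$ satisfies $A_T \tilde f = \lambda \tilde f$ on the tree (since $\pi$ is a covering map and $A$ is local), and averaging over spheres turns this into a one-dimensional linear recurrence relating $\phi_{d-1}, \phi_d, \phi_{d+1}$, namely $q\,\phi_{d+1} + \phi_{d-1} = \lambda \phi_d$ for $d \ge 1$.

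Let me now analyze this recurrence. Its characteristic equation is $q x^2 - \lambda x + 1 = 0$, with roots $x_\pm = \frac{\lambda \pm \sqrt{\lambda^2 - 4q}}{2q}$, satisfying $x_+ x_- = 1/q$. So the general solution is $\phi_d = A x_+^d + B x_-^d$. The decisive point is the $L^p$-norm: since the sphere of radius $d$ has size $q_v \approx (q+1)q^{d-1}$, we have
\begin{equation}
\sum_{v \in V_T} |\phi(v)|^p = \sum_{d \ge 0} q_v |\phi_d|^p \approx \sum_{d \ge 0} q^{d} |\phi_d|^p.
\end{equation}
For this to converge we need $q^{d}|\phi_d|^p \to 0$ geometrically, i.e. $q^{1/p}|\phi_d| \to 0$. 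With $|\phi_d| \sim |x|^d$ for the dominant root, convergence for all $p' > p$ is governed by whether $q^{1/p'}|x_{\max}| < 1$ for the larger root $|x_{\max}| = \max(|x_+|,|x_-|)$. **The clean boundary case** is $|x_{\max}| = q^{-1/p}$, which I would show is exactly equivalent to $|\lambda| \le q^{1/p} + q^{(p-1)/p}$: indeed substituting $x = q^{-1/p}$ (real root regime) gives $\lambda = q x + 1/x = q^{(p-1)/p} + q^{1/p}$, matching $\hat\lambda(p)$.

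Assembling the two directions: if $\lambda(X) \le \hat\lambda(p)$, then every eigenvalue $\lambda$ appearing in the decomposition of $f \in L^2_{00}(V_X)$ satisfies $|\lambda| \le \hat\lambda(p)$, forcing each $|x_{\max}| \le q^{-1/p}$, hence $\rho_{v_0}(\tilde f) \in L^{p+\epsilon}(V_T)$ for every $\epsilon > 0$ by the geometric-series estimate above; by linearity this extends from eigenfunctions to all of $L^2_{00}$. Conversely, if $\lambda(X) > \hat\lambda(p)$, I would take $f$ to be (a component in) the eigenfunction realizing $\lambda(X)$; then the dominant root satisfies $|x_{\max}| > q^{-1/p}$, so $q^{d}|\phi_d|^p \gtrsim (q|x_{\max}|^p)^d$ grows, and $\rho_{v_0}(\tilde f) \notin L^{p}$, and in fact fails $L^{p+\epsilon}$ for small $\epsilon$.

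The main obstacle I anticipate is not the recurrence itself but the boundary and degeneracy analysis. First, one must verify that the coefficient $A$ of the dominant root does not vanish, so that $\rho_{v_0}(\tilde f)$ genuinely inherits the growth rate $|x_{\max}|^d$; the role of $L^2_{00}$ (orthogonality to $\pm(q+1)$) is precisely to exclude the trivial eigenvalues where $x_\pm$ degenerate to $1$ and $1/q$, which would make $\tilde f$ constant-like and non-decaying. Second, the complex eigenvalue regime $|\lambda| < 2\sqrt q$ requires care: there $|x_+| = |x_-| = q^{-1/2}$ with oscillation, and I would need to confirm that the $\epsilon$-loss in $L^{p+\epsilon}$ correctly absorbs the boundary case $|\lambda| = \hat\lambda(p)$, where $q^{1/p}|x_{\max}| = 1$ exactly and only polynomial factors (from a possible repeated root when $\lambda = 2\sqrt q$) separate $L^p$-failure from $L^{p+\epsilon}$-membership. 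Handling the repeated-root case $\lambda = 2\sqrt q$ cleanly, where $\phi_d \sim d\, q^{-d/2}$, is where the "for every $\epsilon > 0$" formulation earns its keep and where I would be most careful.
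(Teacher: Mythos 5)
Your proposal is correct, and it is essentially the ``direct'' proof that the paper explicitly mentions and deliberately declines to give (``Theorem \ref{thm:Basic graph claim} can be proved directly. We will take a slightly longer path\dots''). The paper instead routes through the vertex Hecke algebra $H_{0}$ and the Satake isomorphism: it computes the scalar $A_{k}(\theta)$ by which the sphere-sum operator $A_{k}$ acts on the one-dimensional representation $V_{\theta}$, proves the two-sided growth estimate $\left(k+1\right)^{-1}\lesssim\left|A_{k}(\theta)\right|/\left|\theta\right|^{k}\lesssim k+1$ (Lemma \ref{lem:Growth Lemma}), converts this into $p$-temperedness (Proposition \ref{prop:theta->tempered}, Corollary \ref{cor:theta to tempered}), and finally identifies $\rho_{v_{0}}(\tilde{f})$ as the geometric realization of the matrix coefficient $h\mapsto(h\tilde{f})(\tilde{v}_{0})$, so that both directions of the theorem reduce to $p$-temperedness of $L_{00}^{2}(V_{X})$. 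Your recurrence $q\phi_{d+1}+\phi_{d-1}=\lambda\phi_{d}$ is the same computation in different clothing: its characteristic roots $x_{\pm}$ are $1/\theta$ and $1/\tilde{\theta}$ where $\theta+q\theta^{-1}=\lambda$, and $\phi_{d}=f(\pi(v_{0}))A_{d}(\theta)/\bigl((q+1)q^{d-1}\bigr)$. The two delicate points you flag both resolve cleanly, and the paper's closed form $A_{k}(\theta)=\left(\theta-\tilde{\theta}\right)^{-1}\left(\theta^{k-1}\left(\theta^{2}-1\right)-\tilde{\theta}^{k-1}\left(\tilde{\theta}^{2}-1\right)\right)$ shows why: the coefficient of the dominant root is a nonzero multiple of $\theta^{2}-1$, which vanishes only for $\theta=\pm1$, i.e.\ $\lambda=\pm(q+1)$, exactly the eigenvalues that $L_{00}^{2}$ excludes (equivalently, in your normalization the coefficient of $x_{\max}$ vanishes only when $\lambda/(q+1)$ is itself a root, forcing $\lambda=\pm(q+1)$); and the repeated root at $\lambda=\pm2\sqrt{q}$ contributes only the polynomial factor $k+1$, which the $\epsilon$ in $L^{p+\epsilon}$ absorbs. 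What your route buys is a short, self-contained, elementary argument for this one theorem; what the paper's route buys is machinery (Hecke operators, matrix coefficients, temperedness) that is reused verbatim for the Iwahori--Hecke algebra on directed edges, for bipartite biregular graphs, and for the higher-dimensional complexes of the sequel.
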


One of the motivations for the definition of a Ramanujan graph is
the classical result of Kesten (\cite{kesten1959symmetric}), stating
that the norm of $A$ on $L^{2}(V_{T})$ is $2\sqrt{q}$. This is
the case $p=2$ of the following theorem:
\begin{thm}
\label{thm:Lp bound}The value $\hat{\lambda}(p)=q^{\nicefrac{1}{p}}+q^{\nicefrac{\left(p-1\right)}{p}}$
is the norm of the adjacency operator $A$ on $L^{p}(V_{T})$, $1\le p\le\infty$.

More precisely, the spectrum of $A$ on $L^{p}(V_{T})$, $1\le p\le\infty$
is $\left\{ \theta+q\theta^{-1}:\theta\in\mathbb{C},\,q^{\nicefrac{1}{p'}}\le\left|\theta\right|\le q^{\nicefrac{\left(p'-1\right)}{p'}}\right\} $,
for $p'=\max\left\{ p,\nicefrac{p}{\left(p-1\right)}\right\} $.
\end{thm}

Theorem \ref{thm:Basic graph claim} allows us to define:
\begin{defn}
For $p\ge2$, a graph is an \emph{$L^{p}$-expander} if it satisfies
one of the equivalent conditions of Theorem \ref{thm:Basic graph claim}.
\end{defn}

In particular, our definition of an $L^{2}$-expander graph is the
same as a Ramanujan graph. The referee suggested using the term an
$L^{p}$-Ramanujan graph instead of an $L^{p}$-expander, which is
perhaps a better choice since the definition is based on comparing
the graph to the universal cover $T$. We preferred this notion since
$L^{p}$-expanders are simply expanders with an explicit spectral
gap.

Theorem \ref{thm:Basic graph claim} and Theorem \ref{thm:Lp bound}
are part of a larger theory which we develop in this paper, which
include operators acting also on functions on the directed edges $E_{X}$
of $X$ (i.e.~each non-oriented edge of $X$ is counted twice in
$E_{X}$). Let us state here the main result without giving all the
definitions. See Theorem \ref{thm:Full Theorem} for a precise and
extended form.
\begin{thm}
\label{thm:Full Theorem-1}Let $X$ be a finite, connected, non-bipartite,
$\left(q+1\right)$-regular graph. For $p\ge2$, the following are
equivalent:
\begin{enumerate}
\item Every eigenvalue $\lambda$ of $A$ on $L_{0}^{2}(V_{X})$ satisfies
$\left|\lambda\right|\le q^{\nicefrac{1}{p}}+q^{\nicefrac{\left(p-1\right)}{p}}$.
\item For every $f\in L_{0}^{2}(V_{X})$ and $v_{0}\in T$, $\rho_{v_{0}}\left(\tilde{f}\right)\in L^{p+\epsilon}(V_{T})$
for every $\epsilon>0$.
\item Every eigenvalue $\lambda$ of Hashimoto's non-backtracking operator
$h_{NB}$ on $L_{0}^{2}(E_{X})$ satisfies $\left|\lambda\right|\le q^{\nicefrac{\left(p-1\right)}{p}}$.
\item The eigenvalues of every ``Hecke operator'' $h$ on $L_{0}^{2}(V_{X})\oplus L_{0}^{2}(E_{X})$
are contained in the spectrum of $h$ on $L^{p}(V_{T})\oplus L^{p}(E_{X})$. 
\item Let $A_{k}\colon L^{2}\left(V_{X}\right)\to L^{2}\left(V_{X}\right)$
be the ``non-backtracking-distance-$k$ operator''. For every $k\ge0$,
every eigenvalue $\lambda_{k}$ of $A_{k}$ on $L_{0}^{2}(V_{X})$
satisfies $\left|\lambda_{k}\right|\le(k+1)q^{k\nicefrac{\left(p-1\right)}{p}}$.
\end{enumerate}
\end{thm}

The theory was developed as part of a generalization to higher dimensions,
which appears in \cite{kamber2016lpcomplex}. In this paper we discuss
its extension to finite, connected, bipartite, $\left(q_{0}+1,q_{1}+1\right)$-biregular
graphs, where $q_{0}<q_{1}$. The graph $X$ has two types of vertices -- $0$
and $1$, and each vertex of type $i\in\left\{ 0,1\right\} $ is contained
in $q_{i}+1$ edges. The non-precise form of the theory in this case
is summarized in the following theorem. See Subsection \ref{subsec:Bipartite-Biregular-Graphs}
for full details.
\begin{thm}
\label{thm:Bipartite theorem}Let $X$ be a finite, connected, bipartite,
\textup{$\left(q_{0}+1,q_{1}+1\right)$-biregular} graph. For $p\ge2$,
the following are equivalent:
\begin{enumerate}
\item For every $f\in L_{00}^{2}(V_{X})$ and $v_{0}\in T$, $\rho_{v_{0}}\left(\tilde{f}\right)\in L^{p+\epsilon}(V_{T}$)
for every $\epsilon>0$.
\item Every eigenvalue $\lambda$ of Hashimoto's non-backtracking operator
$\tilde{h}_{NB}$ on $L_{0}^{2}(\tilde{E}_{X})$ satisfies $\left|\lambda\right|\le\left(q_{0}q_{1}\right)^{\nicefrac{\left(p-1\right)}{p}}$.
\item The eigenvalues of every ``Hecke operator'' $h$ on $L_{00}^{2}(V_{X})\oplus L_{0}^{2}(\tilde{E}_{X})$
are contained in the spectrum of $h$ on $L^{p}(V_{T})\oplus L^{p}(\tilde{E}_{T})$.
\item Let $A_{k}\colon L^{2}\left(V_{X}\right)\to L^{2}\left(V_{X}\right)$
be the ``non-backtracking-distance-$k$ operator'' on vertices.
For every $k\ge0$, every eigenvalue $\lambda_{k}$ of $A_{k}$ on
$L_{00}^{2}(V_{X})$ satisfies $\left|\lambda_{k}\right|\le(k+1)q_{1}\left(q_{0}q_{1}\right)^{\left(\nicefrac{k}{2}\right)\nicefrac{\left(p-1\right)}{p}}$.
\end{enumerate}
For $p=2$, the conditions are equivalent to:
\end{thm}

\begin{itemize}
\item Every eigenvalues $\lambda$ of $A$ on $L_{00}^{2}(V_{X})$ satisfies
$\lambda=0$ or $\sqrt{q_{1}}-\sqrt{q_{0}}\le\left|\lambda\right|\le\sqrt{q_{1}}+\sqrt{q_{0}}$,
and the multiplicity of $\lambda=0$ is $\left|V_{X}^{0}\right|-\left|V_{X}^{1}\right|$,
where $V_{X}^{i}$ is the number of vertices of $X$ of type $i$.
\end{itemize}

\subsection*{Related Results}

Theorem \ref{thm:Full Theorem-1} is connected to some well known
results, mostly about the Zeta function of the graph or the non-backtracking
operator. The condition on the eigenvalues of $A$ for $p=2$ in Theorem
\ref{thm:Bipartite theorem} appears (as a definition) in the work
of Hashimoto (\cite[3.21]{hashimoto1989zeta}), based on similar considerations.
The main new idea of this work is the comparison to $L^{p}$-functions
on the tree, with an extension to $p>2$.

The proof of Alon's second eigenvalue conjecture (\cite{friedman2003proof,bordenave2015new})
is closely related to our analysis. Theorem 20 in \cite{bordenave2015new}
in conjugation with Theorem \ref{thm:Bipartite theorem} show that
a random cover $X'$ of a biregular graph $X$ is an $L^{2+\epsilon}$-expander-cover.
That is, we have a natural decomposition $L^{2}\left(\tilde{E}_{X'}\oplus V_{X'}\right)\cong L^{2}\left(\tilde{E}_{X}\oplus V_{X}\right)\oplus W_{new}$
and $W_{new}$ satisfies the conditions of Theorem \ref{thm:Bipartite theorem}
for $p=2+\epsilon$. In contrast, the covers built in \cite{marcus2013interlacing}
or \cite{hall2018ramanujan}, are not necessarily $L^{2}$-expander-covers.
The construction only promises that the new eigenvalues of $A$ are
bounded from above by $\sqrt{q_{1}}+\sqrt{q_{0}}$ (see also \cite[Question 6.3]{hall2018ramanujan}).

One application of the $L^{p}$-theory developed here is to generalize
the results of \cite{lubetzky2016cutoff,sardari1diameter}, which
concern ``almost-diameter''. The results state that for every $\epsilon>0$
and $\delta>0$ there exists $N$ such that if $X$ is a $\left(q+1\right)$-regular
Ramanujan graph $X$ with $\left|V_{X}\right|>N$, then for every
$x\in V_{X}$, all but $\delta\left|V_{X}\right|$ of the vertices
$y\in V_{X}$ are of distance within $\left[\left(1-\epsilon\right)\log_{q}\left|V_{X}\right|,\left(1+\epsilon\right)\log_{q}\left|V_{X}\right|\right]$
from $x$, which is an optimal result. One can show that for Cayley
expander graphs it is actually enough to assume far weaker conditions,
namely that for every $p>2$ and $\epsilon'>0$, the number of bad
eigenvalues $\lambda$ satisfying $\left|\lambda\right|\ge q^{\nicefrac{1}{p}}+q^{\nicefrac{\left(p-1\right)}{p}}$
is bounded by $C_{\epsilon'}\left|V_{X}\right|^{\nicefrac{2}{p}+\epsilon'}$,
$C_{\epsilon'}$ some constant. Those results will be given elsewhere,
but see \cite{golubev2019cutoff} for analogous results for hyperbolic
surfaces.

\subsection*{Arbitrary Graphs}

The results of this paper do not extend naturally to a general graph
$X$, i.e.~which is not regular nor biregular. In particular, we
do not know the relations between the eigenvalues of the adjacency
operator $A$, the ``non-backtracking-distance-$k$ operator'' $A_{k}$
and the non-backtracking operator $h_{NB}$. 

One can, however, give some $L^{p}$-bounds on operators of the covering
tree $T$ of $X$. For example, up to $O(k)$, the norm of $A_{k}$
on $L^{p}\left(V_{T}\right)$ is bounded by the $\nicefrac{\left(p-1\right)}{p}$-th
power of $N_{k}$ -- the maximal number of vertices on a sphere of
radios $k$. We have that $N_{k}^{1/k}\rightarrow\text{gr}(T)$, the
growth rate of the tree (see \cite{angel2015non}). Similarly, the
spectrum of the non-backtracking operator $h_{NB}$ on $L^{p}(E_{X})$
is bounded in absolute value by $\text{gr}(T)^{\nicefrac{\left(p-1\right)}{p}}$.
See \cite{angel2015non} for similar calculations for $p=2$.

\subsection*{Structure of the article}

The work is divided into two sections -- Section \ref{sec:Operators-on-Vertices}
concerns operators acting on maps on vertices and Section \ref{sec:Operators-on-Edges}
concerns operators acting on maps on directed edges. 

Section \ref{sec:Operators-on-Vertices} contains the proofs of Theorem
\ref{thm:Basic graph claim} and Theorem \ref{thm:Lp bound}. Theorem
\ref{thm:Basic graph claim} can be proved directly. We will take
a slightly longer path, introducing along the way the basic notions
of the vertex Hecke algebra $H_{0}$ and the Satake isomorphism. We
define the algebra in Subsection \ref{subsec:The-Vertex-Hecke-Algebra-of-a-Tree}.
We study the Satake isomorphism and the irreducible representations
of $H_{0}$ in Subsection \ref{subsec:Representations-of_H_0}. In
Subsection \ref{subsec:Geometric-Realization} we show that each irreducible
representation can be realized on functions of the vertices $V_{T}$
of the tree $T$. In Subsection \ref{subsec:Action-on-Finite-graphs}
we prove Theorem \ref{thm:Basic graph claim} and finally in Subsection
\ref{subsec:L_p-Spectrum-of-Hecke} we prove Theorem \ref{thm:Lp bound}.

The second section of this paper is devoted to a generalization of
the theory from maps on vertices to maps on directed edges. While
this generalization is interesting for its own right, its advantage
is apparent in \cite{kamber2016lpcomplex}, where we study high dimensional
$L^{p}$-expanders (see \cite{lubotzky2014ramanujan} for an introduction
to the subject of high dimensional expanders). However, since this
algebra is more complicated, and in particular not commutative, working
with it requires more preliminaries. In Subsection \ref{subsec:The-Directed-Edge-Hecke}
we define the Iwahori-Hecke algebra $H_{\phi}$ of the tree, acting
on functions on directed edges. In Subsection \ref{subsec:Representation-Theory-of}
we study the representations of $H_{\phi}$. In Subsection \ref{subsec:Geometric-Realization-1}
we study the realizations of the representations on functions of the
directed edges $E_{T}$ of the tree. In Subsection \ref{subsec:L_p-Expander-Theorem}
we present Theorem \ref{thm:Full Theorem} which combines all the
results about regular graphs. Finally, in Subsection \ref{subsec:Bipartite-Biregular-Graphs}
we study the corresponding theory for bipartite biregular (but not
regular) graphs. 

Our analysis is based on constructions from the representation theory
of $p$-adic Lie groups, although no prior knowledge of it is assumed.
The main contribution of this work is the interpretation of the representation-theoretic
statements into simple combinatorial language, as well as an extension
of some results to the case $p>2$.

\section*{Preliminaries from Representation Theory}

Let us collect here some basic definitions and facts from representation
theory. Note that the first section of this work concerns the commutative
vertex Hecke algebra $H_{0}$ whose analysis does not require most
of those facts. The reader is referred to \cite{etingof2011introduction}
for basic representation theory.

An algebra $H$ in this work is a vector space over $\mathbb{C}$,
with multiplication satisfying the usual properties (including associativity),
and a unit $Id=Id_{H}$. Every subset of elements $S\subset H$ generates
a subalgebra $H_{S}$, which is the intersection of the subalgebras
(with the same unit $Id_{H'}=Id_{H}$) $H'\subset H$ containing $S$.
We say that the algebra $H$ is generated by $S$ if $H_{S}=H$. We
say that the algebra $H$ is freely generated by an element $A\in H$
if there is an algebra isomorphism $f\colon\mathbb{C}\left[x\right]\to H$
defined by $f(x)=A$. An algebra representation $\left(\pi,V\right)$
of $H$ is an algebra homomorphism $\pi\colon H\to\text{End}_{\mathbb{C}}\left(V\right)$
such that $\pi(Id_{H})=Id_{V}$. To make the notations simpler we
sometimes omit $\pi$ and refer to $V$ directly as the representation.

A homomorphism of two $H$-representations $\left(\pi_{1},V_{1}\right)$
and $\left(\pi_{2},V_{2}\right)$ is a linear map $\varphi\colon V_{1}\to V_{2}$
such that for all $v\in V_{1}$, $h\in H$, it holds that $\pi_{2}\left(h\right)\varphi\left(v\right)=\varphi\left(\pi_{1}\left(h\right)v\right)$.

If $U\subset V$ is a $\pi\left(H\right)$-invariant subspace of $V$
we say that $U$ is a subrepresentation of $\left(\pi,V\right)$.
If $U$ is a subrepresentation of $V$ there is a natural action of
$H$ on the vector space $V/U$, to which we call a quotient representations
of $\left(\pi,V\right)$. If $\varphi\colon V_{1}\to V_{2}$ is an
$H$-algebra homomorphism then $\ker\varphi\subset V_{1}$ is a subrepresentation
of $V_{1}$ and $\varphi\left(V_{1}\right)\subset V_{2}$ is a subrepresentation
of $V_{2}$ isomorphic to the quotient $V_{1}/\ker\varphi$. In particular,
if $\varphi$ is onto then $V_{2}$ is isomorphic to a quotient of
$V_{1}$.

If $U_{1},U_{2},..,U_{k}$ are subrepresentation of $V$ and $V=\oplus_{i=1}^{k}U_{i}$
we say that the representation $V$ is a direct sum of the representations
$U_{1},U_{2},..,U_{k}$. An algebra representation is called indecomposable
if it is not a direct sum of two proper non-trivial subrepresentations.
It is called irreducible if there is no proper non-trivial subrepresentations. 

An $H$-representation $\left(\pi,V\right)$ is also called a left
$H$-module. Similarly, one may define a right $H$-module $\left(\rho,W\right)$
by a linear transformation $\rho\colon H\to\text{\text{End}}_{\mathbb{C}}\left(W\right)$
such that $\rho\left(hh'\right)=\rho\left(h'\right)\rho\left(h\right)$.
Given a left $H$-module $\left(\pi,V\right)$ and a right $H$-module
$\left(\rho,W\right)$ one may define the vector space $W\otimes_{H}V$,
which is the quotient of the vector space $V\otimes W$ by the vector
subspace spanned by $\left\{ \rho\left(h\right)w\otimes v-w\otimes\pi\left(h\right)v=0:v\in V,w\in W,h\in H\right\} $.
Moreover, if $W$ is an $H'$-representation then $W\otimes_{H}V$
is also an $H'$-representation.

We call an algebra $H$ a $\ast$-algebra if it has an involution
$\ast\colon H\to H$, i.e.~a map satisfying for $h_{1},h_{2}\in H$,
$\alpha\in\mathbb{C}$, $\left(h_{1}+\alpha h_{2}\right)^{\ast}=h_{1}^{\ast}+\overline{\alpha}h_{2}^{\ast}$
and $\left(h_{1}h_{2}\right)^{\ast}=h_{2}^{\ast}h_{1}^{\ast}$. A
representation $\left(\pi,V\right)$ of a $\ast$-algebra $H$ is
called unitary if there exists an inner product $\left\langle \cdot,\cdot\right\rangle $
on $V$ satisfying $\left\langle \pi\left(h\right)v_{1},v_{2}\right\rangle =\left\langle v_{1},\pi\left(h^{*}\right)v_{2}\right\rangle $
for every $v_{1},v_{2}\in V$ and $h\in H$. We will need the standard
claim:
\begin{prop}
\label{prop:finite dimensional unitary decomposes}Every finite dimensional
unitary representation $\left(\pi,V\right)$ of a $\ast$-algebra
$H$ decomposes into a direct sum of irreducible representations.
\end{prop}

\begin{proof}
Assume $\{0\}\ne V'\subset V$ is a proper subrepresentation. Let
$U=\{u\in V:\forall v\in V'\,\left\langle v,u\right\rangle =0\}$.
Since $\left\langle \cdot,\cdot\right\rangle $ is an inner product
we have $V=V'\oplus U$ as vector spaces. Moreover if $u\in U$, $h\in H$
then for every $v\in V'$, $\left\langle v,\pi\left(h\right)u\right\rangle =\left\langle \pi\left(h^{\ast}\right)v,u\right\rangle =0$.
Therefore $\pi\left(h\right)u\in U$ and $U$ is also a subrepresentation.
The claim follows by induction.
\end{proof}

\section{\label{sec:Operators-on-Vertices}Operators on Vertices}

\subsection{\label{subsec:The-Vertex-Hecke-Algebra-of-a-Tree}The Vertex Hecke
Algebra of a Regular Tree}

Let $T$ be the $\left(q+1\right)$-regular tree and let $V_{T}$
be its set of vertices. Let $d\colon V_{T}\times V_{T}\rightarrow\{0,1,...\}$
be the natural distance function. 
\begin{defn}
Let $A_{k}\colon\mathbb{C}^{V_{T}}\rightarrow\mathbb{C}^{V_{T}}$,
$k=0,1,\dots$ be the operator: 
\[
A_{k}f(v)=\sum_{v':d(v,v')=k}f(v').
\]
\end{defn}

Notice that $A_{0}=Id$, that $A_{1}=A$ is the vertex adjacency operator
of $T$, and that for $k\ge1$, $A_{k}$ sums $\left(q+1\right)q^{k-1}\approx q^{k}$
different vertices. The \emph{Hecke relations} can be easily verified:
\emph{
\begin{align*}
A^{2} & =A_{2}+\left(q+1\right)A_{0}\\
AA_{k} & =A_{k+1}+qA_{k-1}\,\,\,\,\,\text{for }k=2,3,\dots
\end{align*}
}
\begin{defn}
The \emph{vertex Hecke algebra} $H_{0}$ (sometimes called the \emph{spherical
Hecke algebra}), is the algebra spanned as a vector space by $A_{k}$,
$k\ge0$.
\end{defn}

By the Hecke relations $H_{0}$ is indeed an algebra. The relations
also show that $H_{0}$ is commutative and freely generated by $A=A_{1}$.

There is a more abstract definition of the vertex Hecke algebra. Define:
\begin{defn}
Let $S$ be a discrete set. We say that a linear operator $h\colon\mathbb{C}^{S}\rightarrow\mathbb{C}^{S}$
is \emph{row and column finite} if it can be written as $hf(x)=\sum_{y\in S}\alpha_{x,y}f(y)$,
for some $\alpha\colon S\times S\rightarrow\mathbb{C}$, with $\#\left\{ y:\alpha_{x,y}\ne0\right\} <\infty$
and $\#\left\{ y:\alpha_{y,x}\ne0\right\} <\infty$ for every $x\in S$. 
\end{defn}

Notice that every operator $h\in H_{0}$ is row and column finite
since this is true for the spanning vectors $A_{k}$, $k\ge0$. 
\begin{prop}
\label{prop:Hecke_commutes}Let $\gamma\in\mbox{Aut}(T)$ be an automorphism
of the tree. Then $\gamma$ acts naturally on $\mathbb{C}^{V_{T}}$
by $\gamma\cdot f(x)=f(\gamma^{-1}x)$. Let $h\colon\mathbb{C}^{V_{T}}\rightarrow\mathbb{C}^{V_{T}}$
a linear operator. Then $h\in H_{0}$ if and only if $h$ is row and
column finite and the action of $h$ on $\mathbb{C}^{V_{T}}$ commutes
with the action of each $\gamma\in\mbox{Aut}(T)$ on $\mathbb{C}^{V_{T}}$.
\end{prop}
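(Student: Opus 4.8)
The plan is to prove the two implications separately; the forward direction is routine and the content lies in the converse.

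For the forward direction, it suffices to treat the generators $A_{k}$, since both row-and-column finiteness and the commutation relation are preserved under finite linear combinations. Each $A_{k}$ is row and column finite because the sphere of radius $k$ about any vertex is finite, as already noted. To see that $A_{k}$ commutes with every $\gamma\in\mathrm{Aut}(T)$, I would use that automorphisms preserve the distance function: unwinding both $A_{k}(\gamma\cdot f)(v)$ and $(\gamma\cdot A_{k}f)(v)$ and performing the substitution $v'\mapsto\gamma^{-1}v'$ shows that $\gamma$ carries the sphere of radius $k$ about $v$ bijectively onto the sphere of radius $k$ about $\gamma^{-1}v$, so the two expressions coincide.

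For the converse, suppose $h$ is row and column finite, say $hf(x)=\sum_{y}\alpha_{x,y}f(y)$, and commutes with all automorphisms. The first step is to extract from $h(\gamma\cdot f)=\gamma\cdot(hf)$ that the kernel is invariant under the diagonal action: testing this identity on delta functions $f=\delta_{w}$ and comparing coefficients yields $\alpha_{\gamma x,\gamma y}=\alpha_{x,y}$ for all $x,y$ and all $\gamma$. The second step, which is the crux, is to invoke the distance-transitivity of the regular tree: $\mathrm{Aut}(T)$ acts transitively on ordered pairs of vertices at any fixed distance $k$. Granting this, the invariance of $\alpha$ forces $\alpha_{x,y}$ to depend only on $d(x,y)$, so I may write $\alpha_{x,y}=c_{k}$ whenever $d(x,y)=k$, giving $hf=\sum_{k\ge0}c_{k}A_{k}f$. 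The final step uses row-finiteness to make this sum finite: if $c_{k}\ne0$, then $\alpha_{x,y}=c_{k}\ne0$ for each of the $\approx q^{k}$ vertices $y$ on the sphere of radius $k$ about $x$, and since $\#\{y:\alpha_{x,y}\ne0\}$ is finite, only finitely many $c_{k}$ can be nonzero. Hence $h=\sum_{k}c_{k}A_{k}$ is a finite combination of the generators, so $h\in H_{0}$.

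The main obstacle is the distance-transitivity claim. I would justify it directly from the structure of $T$: given two ordered pairs at distance $k$, one first maps the geodesic segment joining the first pair onto that joining the second, and then extends to a full automorphism using that every vertex has the same degree $q+1$, so the rooted subtrees hanging off the two segments are isomorphic and can be matched up. Everything else reduces to comparing coefficients and bookkeeping with the finiteness hypothesis.
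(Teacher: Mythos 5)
Your proposal is correct and follows essentially the same route as the paper: both directions rest on the fact that $\mathrm{Aut}(T)$ acts transitively on ordered pairs of vertices at a fixed distance, which forces $\alpha_{x,y}$ to depend only on $d(x,y)$. You additionally spell out two details the paper leaves implicit --- the justification of distance-transitivity by extending an isometry of geodesic segments, and the use of row-finiteness to conclude that only finitely many of the coefficients $c_{k}$ are nonzero so that $h$ is a \emph{finite} linear combination of the $A_{k}$ --- both of which are sound.
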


\begin{proof}
Since automorphisms preserve distances in $T$, the only if part follows.

As for the if part, write $h\colon\mathbb{C}^{V_{T}}\rightarrow\mathbb{C}^{V_{T}}$
as $hf(x)=\sum_{y\in V_{T}}\alpha_{x,y}f(y)$, as in the definition
of a row and column finite operator. Assume that $h$ commutes with
every $\gamma\in\mbox{Aut}(T)$. If $x,y,x',y'\in V_{T}$, $d(x,y)=d(x',y')$,
then there exists $\gamma\in\mbox{Aut}(T)$ such that $\gamma(x')=x$,
$\gamma(y')=y$. Since $h\gamma=\gamma h$ we have $\alpha_{x,y}=\alpha_{x',y'}$.
Therefore $\alpha_{x,y}$ depends only on $d(x,y)$ which means that
$h\in H_{0}$.
\end{proof}
\begin{rem}
The definition of a Hecke algebra as an algebra of operators commuting
with automorphisms appears in a similar context in \cite{first2016ramanujan,kamber2016lpcomplex}.
In both places it is used to extend the definition of Ramanujan graphs
to higher dimensional complexes. 
\end{rem}

\subsection{\label{subsec:Representations-of_H_0}Representations of $H_{0}$}

The following theorem is called the \emph{Satake isomorphism}:
\begin{thm}
The algebra $H_{0}$ is isomorphic to the subalgebra of $\mathbb{C}\left[x,x^{-1}\right]$
which is invariant with respect to the automorphism $x\leftrightarrow x^{-1}$.
The isomorphism is given by:
\begin{align*}
A_{0} & \leftrightarrow1\\
A & \leftrightarrow\hat{A}=q^{\nicefrac{1}{2}}\left(x+x^{-1}\right),
\end{align*}
and for $k\ge2$:
\begin{eqnarray*}
A_{k} & \leftrightarrow & \hat{A}_{k}=q^{\nicefrac{k}{2}}\left(x^{k}+x^{-k}+\left(1-q^{-1}\right)\left(x^{k-2}+x^{k-4}+...+x^{-k+2}\right)\right)\\
 &  & \,\,\,=q^{\nicefrac{\left(k-1\right)}{2}}\left(q^{\nicefrac{1}{2}}x-q^{\nicefrac{1}{2}}x^{-1}\right)^{-1}\left(x^{k-1}\left(qx^{2}-1\right)-x^{-k+1}\left(qx^{-2}-1\right)\right).
\end{eqnarray*}
\end{thm}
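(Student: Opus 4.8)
The plan is to establish the Satake isomorphism by showing that the map $A \leftrightarrow \hat{A}$ extends to an algebra isomorphism, with the explicit formula for $\hat{A}_k$ being the image of $A_k$. Since $H_0$ is commutative and generated as an algebra by $A$ (as noted after the Hecke relations), and since the target ring $R = \mathbb{C}[x,x^{-1}]^{x \leftrightarrow x^{-1}}$ is also commutative and generated by the symmetric element $\hat{A} = q^{1/2}(x+x^{-1})$, the natural strategy is to verify that $H_0$ and $R$ are both free polynomial algebras on a single generator, and that the assignment $A \mapsto \hat{A}$ respects this structure.

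First I would check that the proposed images satisfy the Hecke relations. That is, I would verify directly that $\hat{A}_2 = \hat{A}^2 - (q+1)$ and that $\hat{A}\,\hat{A}_k = \hat{A}_{k+1} + q\hat{A}_{k-1}$ for $k \ge 2$, matching the relations
\begin{align*}
A^2 &= A_2 + (q+1)A_0,\\
A A_k &= A_{k+1} + qA_{k-1}.
\end{align*}
This is a routine computation using the closed form for $\hat{A}_k$: multiplying $q^{1/2}(x+x^{-1})$ against the given expression for $\hat{A}_k$ and collecting terms should produce exactly $\hat{A}_{k+1} + q\hat{A}_{k-1}$. The cleanest route is to use the second (telescoping) expression for $\hat{A}_k$, namely the form involving $(q^{1/2}x - q^{1/2}x^{-1})^{-1}$, where the recursion becomes transparent because multiplication by $x + x^{-1}$ interacts simply with the numerator $x^{k-1}(qx^2-1) - x^{-k+1}(qx^{-2}-1)$. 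Having matched the relations, the assignment $A_k \mapsto \hat{A}_k$ defines an algebra homomorphism $\Phi : H_0 \to R$, because any algebra relation among the $A_k$ follows from the Hecke relations (they generate all relations, as $H_0$ is free on $A$).

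Next I would prove that $\Phi$ is an isomorphism. For surjectivity, note that $\hat{A} = q^{1/2}(x+x^{-1})$ lies in the image, and the ring $R$ of symmetric Laurent polynomials is generated as a $\mathbb{C}$-algebra by $x + x^{-1}$; hence every element of $R$ is a polynomial in $\hat{A}$ and so lies in the image of $\Phi$. For injectivity, the key observation is that $\hat{A}_k$ has leading term $q^{k/2}x^k$, so the images $\{\hat{A}_k\}_{k \ge 0}$ are linearly independent over $\mathbb{C}$ (their degrees in $x$ are distinct); since $\{A_k\}_{k\ge0}$ is a vector-space basis of $H_0$ by definition, $\Phi$ sends a basis to a linearly independent set and is therefore injective. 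Together these give the claimed isomorphism, with $A_0 \mapsto 1$ and $A \mapsto \hat{A}$.

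The main obstacle, and the only genuinely delicate point, is verifying the equivalence of the two displayed formulas for $\hat{A}_k$ and then confirming the recursion in the explicit form; the second expression is engineered so that the geometric-like sum $x^{k-2} + x^{k-4} + \cdots + x^{-k+2}$ telescopes against the factor $(q^{1/2}x - q^{1/2}x^{-1})^{-1}$, and one must check that this rational expression is in fact a Laurent polynomial (the apparent pole at $x = \pm 1$ cancels). I expect this cancellation to follow from the identity $x^{k-1}(qx^2-1) - x^{-k+1}(qx^{-2}-1)$ being divisible by $x - x^{-1}$, which one can confirm by noting both sides vanish at $x^2 = 1$ after accounting for the $q$-factors, or more cleanly by expanding the geometric sum directly. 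Once this bookkeeping is done, the isomorphism claim is essentially formal.
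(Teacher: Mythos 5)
Your proposal is correct and follows essentially the same route as the paper: verify that the explicit elements $\hat{A}_k$ satisfy the Hecke relations, and conclude via the fact that both $H_0$ and the symmetric Laurent polynomials are free on a single generator (your surjectivity/injectivity argument via leading terms is just a spelled-out version of the paper's one-line appeal to free generation). The only cosmetic difference is that the paper runs the recursion check on the geometric-sum form of $\hat{A}_k$ rather than the rational form you suggest, and separately verifies the equivalence of the two expressions at the end.
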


\begin{rem}
The algebra $H_{0}$ is freely generated by $A_{1}$ and as such is
isomorphic to $\mathbb{C}\left[x\right]$. The same is true for the
invariant subalgebra of $\mathbb{C}\left[x,x^{-1}\right]$, which
is generated by $x+x^{-1}$. The important statement in the theorem
is the explicit description of the isomorphism.
\end{rem}

\begin{proof}
Since both algebras are freely generated (as algebras) by a single
element, $A\rightarrow q^{\nicefrac{1}{2}}\left(x+x^{-1}\right)$
indeed defines an isomorphism. 

The following calculations verify the explicit description:
\begin{align*}
\hat{A}^{2} & =q\left(x+x^{-1}\right)^{2}=q\left(x^{2}+x^{-2}+2\right)\\
 & =q+1+q\left(x^{2}+x^{-2}+1-q^{-1}\right)\\
 & =\left(q+1\right)\hat{A}_{0}+\hat{A}_{2},
\end{align*}
and for $k\ge2$:
\begin{eqnarray*}
\hat{A}\cdot\hat{A}_{k} & = & q^{\nicefrac{1}{2}}\left(x+x^{-1}\right)\cdot q^{\nicefrac{k}{2}}\left(x^{k}+x^{-k}+(1-q^{-1})(x^{k-2}+x^{k-4}+...+x^{-k+2})\right)\\
 & = & q^{\nicefrac{\left(k+1\right)}{2}}\left(x^{k+1}+x^{-k-1}+x^{k-1}+x^{1-k}+(1-q^{-1})(x^{k-1}+x^{1-k})+(2-2q^{-1})(x^{k-3}+x^{k-5}+...+x^{3-k})\right)\\
 & = & q^{\nicefrac{\left(k+1\right)}{2}}\left(x^{k+1}+x^{-1-k}+(1-q^{-1})(x^{k-1}+x^{k-3}+...+x^{1-k})\right)\\
 &  & +q\cdot q^{\nicefrac{\left(k-1\right)}{2}}\left(x^{k-1}+x^{1-k}+(1-q^{-1})(x^{k-3}+x^{k-5}+...+x^{3-k})\right)\\
 & = & \hat{A}_{k+1}+q\hat{A}_{k-1}.
\end{eqnarray*}

Finally, we have: 
\begin{align*}
\hat{A}_{k} & =q^{\nicefrac{k}{2}}\left(x^{k}+x^{-k}+\left(1-q^{-1}\right)\left(x^{k-2}+x^{k-4}+...+x^{-k+2}\right)\right)\\
 & =q^{\nicefrac{k}{2}}\left(x^{k}+x^{-k}+\left(1-q^{-1}\right)\left(x-x^{-1}\right)^{-1}\left(x^{k-1}-x^{-k+1}\right)\right)\\
 & =q^{\nicefrac{k}{2}}\left(x^{k-1}\left(x+\left(1-q^{-1}\right)\left(x-x^{-1}\right)^{-1}\right)+x^{-k+1}\left(x^{-1}-\left(1-q^{-1}\right)\left(x-x^{-1}\right)^{-1}\right)\right)\\
 & =q^{\nicefrac{k}{2}}\left(x-x^{-1}\right)^{-1}\left(x^{k-1}\left(x^{2}-q^{-1}\right)-x^{-k+1}\left(x^{-2}-q^{-1}\right)\right)\\
 & =q^{\nicefrac{(k-1)}{2}}\left(q^{\nicefrac{1}{2}}x-q^{\nicefrac{1}{2}}x^{-1}\right)^{-1}\left(x^{k-1}\left(qx^{2}-1\right)-x^{-k+1}\left(qx^{-2}-1\right)\right).
\end{align*}
\end{proof}
Let us twist the Satake isomorphism by choosing $\theta=q^{\nicefrac{1}{2}}x$.
Write $\tilde{\theta}=q\theta^{-1}=q^{\nicefrac{1}{2}}x^{-1}$.
\begin{cor}
\label{cor:Twisted-Satake}The algebra $H_{0}$ is isomorphic to the
subalgebra of $\mathbb{C}\left[\theta,\theta^{-1}\right]$ which is
invariant with respect to the automorphism $\theta\leftrightarrow\tilde{\theta}=q\theta^{-1}$.
The isomorphism is given by:
\begin{align*}
A_{0} & \leftrightarrow1\\
A & \leftrightarrow A(\theta)=\theta+\tilde{\theta},
\end{align*}
and for $k\ge1$:
\begin{eqnarray*}
A_{k} & \leftrightarrow & A_{k}(\theta)=\theta^{k}+\tilde{\theta}^{k}+\left(1-q^{-1}\right)\sum_{i=1}^{k-1}\theta^{k-i}\tilde{\theta}^{i}=\\
 &  & =\left(\theta-\tilde{\theta}\right)^{-1}\left(\theta^{k-1}\left(\theta^{2}-1\right)-\tilde{\theta}^{k-1}\left(\tilde{\theta}^{2}-1\right)\right),
\end{eqnarray*}
where the last equality holds for $\theta\ne\tilde{\theta}$.
\end{cor}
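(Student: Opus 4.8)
The plan is to deduce the corollary directly from the Satake isomorphism as a linear change of the spectral variable, so that essentially no new algebraic content is required beyond a careful substitution. First I would set $\theta=q^{1/2}x$, equivalently $x=q^{-1/2}\theta$; since this is an invertible monomial change of variable, it induces an algebra isomorphism $\mathbb{C}[x,x^{-1}]\cong\mathbb{C}[\theta,\theta^{-1}]$. The key observation is that it intertwines the two involutions: under $\theta=q^{1/2}x$, the map $x\mapsto x^{-1}$ sends $\theta=q^{1/2}x$ to $q^{1/2}x^{-1}=q\theta^{-1}=\tilde{\theta}$, so the automorphism $x\leftrightarrow x^{-1}$ becomes exactly $\theta\leftrightarrow\tilde{\theta}$. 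Hence the $x\leftrightarrow x^{-1}$-invariant subalgebra, which the theorem identifies with $H_{0}$, is carried isomorphically onto the $\theta\leftrightarrow\tilde{\theta}$-invariant subalgebra, giving the first assertion.

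It then remains to rewrite the explicit images of $A$ and $A_{k}$ in the new variable. For $A$ this is immediate, since $\hat{A}=q^{1/2}(x+x^{-1})=\theta+\tilde{\theta}$. For $A_{k}$ with $k\ge1$ I would use the bookkeeping identity $q^{k/2}x^{k-2i}=q^{i}\theta^{k-2i}=\theta^{k-i}\tilde{\theta}^{i}$, so that each middle monomial of $\hat{A}_{k}$ in the first form of the theorem transforms into the corresponding term $\theta^{k-i}\tilde{\theta}^{i}$ of the claimed sum; this turns the first displayed expression for $\hat{A}_{k}$ into $\theta^{k}+\tilde{\theta}^{k}+(1-q^{-1})\sum_{i=1}^{k-1}\theta^{k-i}\tilde{\theta}^{i}$. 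Alternatively, substituting into the factored form of the theorem, I would record $q^{1/2}x=\theta$, $q^{1/2}x^{-1}=\tilde{\theta}$, $qx^{2}-1=\theta^{2}-1$ and $qx^{-2}-1=\tilde{\theta}^{2}-1$; the prefactors $q^{\pm(k-1)/2}$ then cancel and one lands on the second displayed expression $(\theta-\tilde{\theta})^{-1}(\theta^{k-1}(\theta^{2}-1)-\tilde{\theta}^{k-1}(\tilde{\theta}^{2}-1))$.

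Because the corollary is purely a normalization of the Satake isomorphism, there is no genuine obstacle; the only points demanding care are bookkeeping rather than mathematical. I would double-check that the reindexing of the middle sum is correct, i.e.\ that the terms $x^{k-2},x^{k-4},\ldots,x^{-k+2}$ are exactly $x^{k-2i}$ for $i=1,\ldots,k-1$, matching $\theta^{k-i}\tilde{\theta}^{i}=q^{i}\theta^{k-2i}$, and that the formula for $A_{k}$ at $k=1$ collapses to the formula for $A$: there the stated sum $\sum_{i=1}^{0}$ is empty, and the factored form reduces via $\theta^{2}-\tilde{\theta}^{2}=(\theta-\tilde{\theta})(\theta+\tilde{\theta})$ to $\theta+\tilde{\theta}$. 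With these consistency checks the corollary follows.
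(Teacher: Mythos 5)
Your proposal is correct and is exactly the paper's (implicit) argument: the corollary is obtained by the substitution $\theta=q^{1/2}x$, which intertwines $x\leftrightarrow x^{-1}$ with $\theta\leftrightarrow q\theta^{-1}$, and the explicit formulas follow from the bookkeeping identities $q^{k/2}x^{k-2i}=\theta^{k-i}\tilde{\theta}^{i}$ and $qx^{\pm2}-1=\theta^{2}-1,\ \tilde{\theta}^{2}-1$. Your consistency checks (the reindexing of the middle sum and the collapse of the $k=1$ case to $A(\theta)=\theta+\tilde{\theta}$) are correct and complete the verification.
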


We can now classify the irreducible representations of $H_{0}$. For
$0\ne\theta\in\mathbb{C}$, write $A_{k}(\theta)$ as in Corollary
\ref{cor:Twisted-Satake}. 
\begin{cor}
The linear function $\pi_{\theta}^{0}\colon H_{0}\to V_{\theta}\cong\mathbb{C}$
given by $\pi_{\theta}^{0}\left(A_{k}\right)=A_{k}(\theta)$ defines
a representation of $H_{0}$. The representations $\left(\pi_{\theta}^{0},V_{\theta}\right)$
and $\left(\pi_{\theta'}^{0},V_{\theta'}\right)$ are isomorphic if
and only if $\theta'=q\theta^{-1}$ or $\theta'=\theta$. 

Each irreducible finite dimensional representation of $H_{0}$ is
one dimensional and is isomorphic to one of the representations $V_{\theta}$,
for $0\ne\theta\in\mathbb{C}$.
\end{cor}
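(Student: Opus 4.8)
The plan is to reduce everything to the structural fact, already implicit in Corollary \ref{cor:Twisted-Satake}, that $H_0$ is freely generated as an algebra by $A$, so that $H_0\cong\mathbb{C}[t]$ is a polynomial ring in one variable. The representation theory of such an algebra is completely transparent, and the corollary then follows from two standard observations together with one short computation.

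First I would verify that $\phi_\theta$ is a representation. By Corollary \ref{cor:Twisted-Satake}, the assignment $A_k\mapsto A_k(\theta)$ is precisely the composite of the (twisted) Satake isomorphism $H_0\cong\mathbb{C}[\theta,\theta^{-1}]^{W}$, where $W$ is generated by the involution $\theta\leftrightarrow q\theta^{-1}$, with the evaluation-at-$\theta$ homomorphism $\mathbb{C}[\theta,\theta^{-1}]\to\mathbb{C}$. Being a composite of algebra homomorphisms, $\phi_\theta$ is itself an algebra homomorphism $H_0\to\mathbb{C}$, i.e.\ a one-dimensional representation. One could instead check directly that the scalars $A_k(\theta)$ satisfy the Hecke relations, but routing it through Satake avoids any computation.

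Next I would classify the irreducibles. Since $H_0$ is commutative and $\mathbb{C}$ is algebraically closed, Schur's lemma forces every irreducible finite-dimensional representation to be one-dimensional: on such a representation each $h\in H_0$ acts as a module endomorphism (it commutes with the whole commutative algebra), hence by a scalar, so every line is invariant and irreducibility gives $\dim=1$. A one-dimensional representation is exactly an algebra homomorphism $\psi\colon H_0\to\mathbb{C}$, and since $A$ generates $H_0$ such a $\psi$ is determined by the single value $\mu=\psi(A)\in\mathbb{C}$. To match this against the $V_\theta$ I would solve $\theta+q\theta^{-1}=\mu$, i.e.\ $\theta^{2}-\mu\theta+q=0$; this quadratic always has roots, and since their product equals $q\ne0$ both roots are nonzero. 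Hence every $\mu$ arises as $\phi_\theta(A)$ for some $0\ne\theta\in\mathbb{C}$, so every one-dimensional representation is some $V_\theta$.

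The same computation yields the isomorphism criterion: two one-dimensional representations are isomorphic exactly when they coincide as algebra homomorphisms, i.e.\ when $\phi_\theta(A)=\phi_{\theta'}(A)$, or $\theta+q\theta^{-1}=\theta'+q\theta'^{-1}$. Rearranging gives $(\theta-\theta')\bigl(1-q(\theta\theta')^{-1}\bigr)=0$, so either $\theta=\theta'$ or $\theta'=q\theta^{-1}$, as claimed. I do not expect a serious obstacle; the only point needing a little care is confirming that the fiber of $\mu$ under $\theta\mapsto\theta+q\theta^{-1}$ is exactly the pair $\{\theta,q\theta^{-1}\}$ with both members nonzero, which is precisely what makes this map a surjection of $\mathbb{C}^{\times}$ onto $\mathbb{C}$ with fibers of the stated form.
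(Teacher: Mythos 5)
Your proposal is correct and follows essentially the same route as the paper: the representation property comes from composing the (twisted) Satake isomorphism with evaluation at $\theta$, one-dimensionality of irreducibles comes from commutativity of $H_{0}$ (the paper phrases this as eigenvectors of the generator $A$ spanning subrepresentations, which is the same observation), and the classification and isomorphism criterion both reduce to solving $\theta+q\theta^{-1}=\mu$ and noting the two roots multiply to $q$. Your factorization $\left(\theta-\theta'\right)\left(1-q\left(\theta\theta'\right)^{-1}\right)=0$ just makes explicit a step the paper leaves implicit.
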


\begin{proof}
Every eigenvector of $\pi\left(A\right)$ in a representation $\left(\pi,V\right)$
of $H_{0}$ spans a subrepresentation, and therefore each irreducible
finite dimensional representation is one dimensional.

By the (twisted) Satake isomorphism, $\left(\pi_{\theta}^{0},V_{\theta}\right)$
is indeed a representation of $H_{0}$. 

On the other hand, an irreducible representation $\left(\pi,V\right)$
is parameterized by the eigenvalue $\lambda$ of $\pi\left(A\right)$.
Such a representation is isomorphic to $V_{\theta}$ if and only if
$\theta+q\theta^{-1}=\lambda$. This equation always has one or two
solutions $\theta,\theta'\in\mathbb{C}$ satisfying $\theta'=q\theta^{-1}$.
\end{proof}
We say that $\theta$ (or $q\theta^{-1}$) is the \emph{Satake parameter}
of the representation $V_{\theta}$. For a general operator $h\in H_{0}$,
write $h(\theta)\in\mathbb{C}$ for the eigenvalue of $\pi_{\theta}^{0}\left(h\right)$
on $V_{\theta}$.

We will need the following estimates for the representation $V_{\theta}$:
\begin{lem}
\label{lem:Growth Lemma}Let $0\ne\theta\in\mathbb{C}$ satisfy $\left|\theta\right|\ge q\left|\theta\right|^{-1}$.
Then,

(1) For every $k\ge0$, $\left|A_{k}(\theta)\right|\le\left(k+1\right)\left|\theta\right|^{k}$.

(2) There exists an infinite number of $k$-s for which $\left|A_{k}(\theta)\right|\ge2^{-3}\cdot\left|\theta\right|^{k}$.
\end{lem}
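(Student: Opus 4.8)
The plan is to read both bounds off the closed form $A_{k}(\theta)=(\theta-\tilde{\theta})^{-1}\bigl(\theta^{k-1}(\theta^{2}-1)-\tilde{\theta}^{k-1}(\tilde{\theta}^{2}-1)\bigr)$ of Corollary \ref{cor:Twisted-Satake}, using throughout the standing normalization $|\theta|\ge q|\theta|^{-1}=|\tilde{\theta}|$, i.e. $|\theta|\ge\sqrt{q}$. For part (1) I would instead use the expanded form $A_{k}(\theta)=\theta^{k}+\tilde{\theta}^{k}+(1-q^{-1})\sum_{i=1}^{k-1}\theta^{k-i}\tilde{\theta}^{i}$. Each monomial satisfies $|\theta^{k-i}\tilde{\theta}^{i}|=|\theta|^{k-i}|\tilde{\theta}|^{i}\le|\theta|^{k}$ because $|\tilde{\theta}|\le|\theta|$, and likewise $|\theta^{k}|,|\tilde{\theta}^{k}|\le|\theta|^{k}$. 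Since $0<1-q^{-1}<1$ and the sum has $k-1$ terms, the triangle inequality gives $|A_{k}(\theta)|\le\bigl(2+(k-1)\bigr)|\theta|^{k}=(k+1)|\theta|^{k}$. This part is routine.

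Part (2) is the real content. I would first dispose of the degenerate case $\theta=\tilde{\theta}$ (that is, $\theta=\pm\sqrt{q}$) directly from the expanded form: there $A_{k}(\theta)=(\pm\sqrt{q})^{k}\bigl(2+(1-q^{-1})(k-1)\bigr)$, whose modulus even exceeds $|\theta|^{k}$. So assume $\theta\ne\tilde{\theta}$, set $r=\tilde{\theta}/\theta$ (so $|r|\le1$ and $r\ne1$), and normalize $u_{k}:=A_{k}(\theta)/\theta^{k}$. Dividing the closed form by $\theta^{k}$ and using $\theta\tilde{\theta}=q$ shows $u_{k}=C_{1}+C_{2}r^{k}$ for a suitable constant $C_{2}$, where $C_{1}=\frac{\theta^{2}-1}{\theta^{2}-q}$. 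The key inequality is $|C_{1}|\ge\tfrac12$: writing $w=\theta^{2}$, the normalization gives $|w|\ge q$, hence $|w-1|\ge|w|-1\ge q-1$ and $|w-q|\le|w-1|+(q-1)\le2|w-1|$, so that $|C_{1}|=|w-1|/|w-q|\ge\tfrac12$.

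It then remains to convert the lower bound on $|C_{1}|$ into a lower bound on $\limsup_{k}|u_{k}|$. Since $r\ne1$ and $|r|\le1$, the partial sums $\sum_{k<N}r^{k}$ stay bounded, so the Cesàro averages satisfy $\frac1N\sum_{k<N}u_{k}\to C_{1}$. Because $\bigl|\frac1N\sum_{k<N}u_{k}\bigr|\le\frac1N\sum_{k<N}|u_{k}|$ and a Cesàro mean is bounded above by the $\limsup$ of the sequence, I obtain $\limsup_{k}|u_{k}|\ge|C_{1}|\ge\tfrac12$. This produces infinitely many $k$ with $|A_{k}(\theta)|\ge\tfrac12|\theta|^{k}\ge0.1\,|\theta|^{k}$, as required.

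The hard part is precisely the critical circle $|\theta|=\sqrt{q}$, where $|r|=1$ and $u_{k}=C_{1}+C_{2}r^{k}$ oscillates without converging, so there is no single limiting value of $|u_{k}|$ to exploit and one cannot simply pass to $k\to\infty$ (this is exactly the regime governing the Ramanujan/Alon-Boppana threshold). The averaging argument is what rescues the estimate there: it needs only $r\ne1$, guaranteed by $\theta\ne\tilde{\theta}$, rather than the strict decay $|r|<1$, and as a bonus it simultaneously covers the easy convergent regime $|\theta|>\sqrt{q}$, where in fact $u_{k}\to C_{1}$.
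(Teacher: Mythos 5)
Your proof is correct, and part (1) is identical to the paper's. Part (2), however, takes a genuinely different route. The paper splits into three cases after disposing of $\theta=\tilde{\theta}$: when $|\theta|>|\tilde{\theta}|$ it notes that the term $\tilde{\theta}^{k-1}(\tilde{\theta}^{2}-1)$ is eventually dominated by $\theta^{k-1}(\theta^{2}-1)$, and on the critical circle $|\theta|=|\tilde{\theta}|=\sqrt{q}$ (where $\tilde{\theta}=\bar{\theta}$) it invokes an equidistribution-type fact --- that $\operatorname{Im}\bigl((\theta^{2}-1)\theta^{k}\bigr)$ is frequently comparable to $\bigl|(\theta^{2}-1)\theta^{k}\bigr|$ because $q^{-1/2}\theta$ lies on the unit circle and is not $\pm1$ --- which it states without proof; it then assembles the constant from the separate bounds $|\theta-\tilde{\theta}|^{-1}\ge0.5|\theta|^{-1}$ and $|\theta^{2}-1|\ge0.5|\theta|^{2}$. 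Your Ces\`{a}ro argument on $u_{k}=C_{1}+C_{2}r^{k}$ handles the regimes $|r|<1$ and $|r|=1$ uniformly, needs only $r\ne1$, and replaces the unproved equidistribution step by the elementary boundedness of the geometric partial sums $\sum_{k<N}r^{k}=(1-r^{N})/(1-r)$; your bound $|C_{1}|=|w-1|/|w-q|\ge\tfrac12$ plays the role of the paper's two separate modulus estimates. The trade-off is that the paper's cases (a) and (b) give the lower bound for all sufficiently large (even) $k$ with an explicit constant, whereas your $\limsup$ argument only yields infinitely many $k$ with $|u_{k}|\ge\tfrac12-\epsilon$ (your intermediate claim of $\ge\tfrac12|\theta|^{k}$ exactly is a hair too strong, but taking $\epsilon=0.4$ still gives $0.1$, which is all the lemma asserts). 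Two immaterial points worth noting: the closed form you divide by $\theta^{k}$ is valid only for $k\ge1$ (at $k=0$ it yields $1+q^{-1}$, not $A_{0}(\theta)=1$), so the averages should start at $k=1$; and your direct computation in the degenerate case $\theta=\pm\sqrt{q}$ is fine and in fact slightly cleaner than the paper's restriction to even $k$.
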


\begin{proof}
For (1), 
\begin{align*}
\left|A_{k}(\theta)\right| & =\left|\theta^{k}+\tilde{\theta}^{k}+\left(1-q^{-1}\right)\sum_{i=1}^{k-1}\theta^{k-i}\tilde{\theta}^{i}\right|\le\left|\theta\right|^{k}+\left|\tilde{\theta}\right|^{k}+\left(1-q^{-1}\right)\sum_{i=1}^{k-1}\left|\theta^{k-i}\tilde{\theta}^{i}\right|\\
 & \le\left|\theta\right|^{k}+\left|\theta\right|^{k}+\left(1-q^{-1}\right)\sum_{i=1}^{k-1}\left|\theta\right|^{k}\le\left(k+1\right)\left|\theta\right|^{k}.
\end{align*}

For (2) consider the following 3 cases. 

(a) If $\theta=\tilde{\theta}$ then $\theta=\tilde{\theta}=\pm\sqrt{q}$.
Then for every $k$ even, $A_{k}(\theta)$ is a sum of positive terms
and $A_{k}(\theta)\ge\left|\theta\right|^{k}$.

(b) If $\theta\ne\tilde{\theta}$ and $\left|\theta\right|>\left|\tilde{\theta}\right|$
then for $k$ large enough $\left|\tilde{\theta}\right|^{k-1}\left|\tilde{\theta}^{2}-1\right|\le2^{-1}\left|\theta\right|^{k-1}\left|\theta^{2}-1\right|$,
so 
\[
\left|A_{k}(\theta)\right|\ge2^{-1}\left|\theta-\tilde{\theta}\right|^{-1}\left|\theta^{2}-1\right|\left|\theta\right|^{k-1}.
\]

We have that $\left|\theta-\tilde{\theta}\right|\le2\left|\theta\right|$
so $\left|\theta-\tilde{\theta}\right|^{-1}\ge2^{-1}\left|\theta\right|^{-1}$.
Since $\left|\theta\right|^{2}\ge q\ge2$, $\left|\theta^{2}-1\right|\ge2^{-1}\left|\theta\right|^{2}$.
Therefore $\left|A_{k}(\theta)\right|\ge2^{-3}\left|\theta\right|^{k}$.

(c) If $\theta\ne\tilde{\theta}$ and $\left|\theta\right|=\left|\tilde{\theta}\right|$
then $\tilde{\theta}=\bar{\theta}$ and $\left|\theta\right|=q^{\nicefrac{1}{2}}$.
Since $q^{-\nicefrac{1}{2}}\theta$ is on the unit circle and is different
from $\pm1$, there exists an infinite number of $k$-s such that
the imaginary part of $\left(\theta^{2}-1\right)\theta^{k-1}$ is
at least half of its absolute value. For such $k$-s 
\[
\left|\left(\theta^{2}-1\right)\theta^{k-1}-\left(\tilde{\theta}^{2}-1\right)\tilde{\theta}^{k-1}\right|=\left|\left(\theta^{2}-1\right)\theta^{k-1}-\overline{\left(\theta^{2}-1\right)\theta^{k-1}}\right|\ge2^{-1}\left|\left(\theta^{2}-1\right)\theta^{k-1}\right|.
\]
Therefore 
\[
\left|A_{k}(\theta)\right|\ge2^{-1}\left|\theta-\tilde{\theta}\right|^{-1}\left|\theta^{2}-1\right|\left|\theta\right|^{k-1}\ge2^{-3}\left|\theta\right|^{k}.
\]

The last inequality follows from the same computation as in (b).

Combining (a), (b) and (c) gives the explicit constant.
\end{proof}
\begin{rem}
In number theory texts, it is more common to work with $B_{k}=\sum_{0\le l\le\nicefrac{k}{2}}A_{k-2l}$
(see, for example, \cite[4.10]{lubotzky1988ramanujan}). The bounds
for $\left|\theta\right|=\sqrt{q}$ are well known: the bound $\left|B_{k}\left(\theta\right)\right|\le\left(k+1\right)q^{\nicefrac{\left(k-1\right)}{2}}$
with a similar derivation appears in Ramanujan's original conjecture
about the $\tau$ function (\cite[Section 18]{ramanujan1916certain}).
\end{rem}

\subsection{\label{subsec:Geometric-Realization}Geometric Realization}

The construction of $V_{\theta}$ can be realized as a subrepresentation
of the action of $H_{0}$ on $\mathbb{C}^{V_{T}}$ in two ways: the
\emph{sectorial model} and the \emph{spherical model}.

To describe the sectorial model, fix an infinite ray (i.e.~an infinite
non-backtracking path) $R=(v_{0},v_{1},...,)$ on the tree. We define
the \emph{relative distance} $c(v)=c_{R}(v)\in\mathbb{Z}$ of a vertex
$v$ to the ray $R$ as follows: let $c(v_{k})=-k$ for every vertex
$v_{k}$ on the ray $R$, and for any other vertex $v$, if $v_{k}$
is the closest vertex to $v$ among all vertices in $R$, define $c(v)=c(v_{k})+d(v,v_{k})$
- see Figure \ref{fig:Sectorial model}.

\begin{figure}
\begin{tikzpicture} 
\tikzstyle{every node}=[draw,shape=circle]; 
\node[draw=none,fill=none] (c3) at (-2,2) {$c(v)=-4$};
\node[draw=none,fill=none] (c3) at (-2,0) {$c(v)=-3$};
\node[draw=none,fill=none] (c2) at (-2,-2) {$c(v)=-2$};
\node[draw=none,fill=none] (c1) at (-2,-4) {$c(v)=-1$};
\node[draw=none,fill=none] (c0) at (-2,-6) {$c(v)=0$};
\node[draw=none,fill=none] (cm) at (-2,-8) {$c(v)=1$};
\node (v3) at (0,0)  {$v_3$}; 
\node (v2) at (0,-2) {$v_2$}; 
\node (v1) at (0,-4) {$v_1$}; 
\node (v0) at (0,-6) {$v_0$}; 
\node (v3a)  at (2,0) {$\ \ $};
\node (v2a)  at (2,-2) {$\ \ $};
\node (v2aa) at (4,-2) {$\ \ $};
\node (v2ab) at (5,-2) {$\ \ $};
\node (v1a)  at (2,-4) {$\ \ $};
\node (v1aa) at (4,-4) {$\ \ $};
\node (v1ab) at (5,-4) {$\ \ $};
\node (v0a)  at (2,-6) {$\ \ $};
\node (v0aa) at (4,-6) {$\ \ $};
\node (v0ab) at (5,-6) {$\ \ $};

\draw 
(v3) -- (v2)
(v3) -- (v2a)
(v3a) -- (v2aa)
(v3a) -- (v2ab)

(v2) -- (v1)
(v2) -- (v1a)
(v2a) -- (v1aa)
(v2a) -- (v1ab)

(v1) -- (v0)
(v1) -- (v0a)
(v1a) -- (v0aa)
(v1a) -- (v0ab)

;
\draw[dashed] (v3) -- (0,2) 
(v0) -- (0, -8)
(v0) -- (2, -8)
(v3a) -- (0,2)
(v0a) -- (4, -8)

(v0a) -- (5, -8)
(v2aa) -- (6,-4)
(v2aa) -- (6.5,-4)
(v2ab) -- (7,-4)
(v2ab) -- (7.5,-4)

(v1aa) -- (6,-6)
(v1aa) -- (6.5,-6)
(v1ab) -- (7,-6)
(v1ab) -- (7.5,-6)

(v0aa) -- (6,-8)
(v0aa) -- (6.5,-8)
(v0ab) -- (7,-8)
(v0ab) -- (7.5,-8)
;
\end{tikzpicture} 

\caption{\label{fig:Sectorial model}The sectorial distance $c$(v) of a $3$-regular
tree with a chosen ray $\left(v_{0},v_{1},...\right)$. The value
of $c$ is the same for all the vertices in each row of the figure.}
\end{figure}
Define $\tilde{f}_{\theta}\in\mathbb{C}^{V_{T}}$ by $\tilde{f_{\theta}}(v)=\theta{}^{-c(v)}$.
Notice that the relative distance $c$ has the property that every
vertex $v$ has one neighbor $u$ with $c(u)=c(v)-1$, and $q$ neighbors
$u_{1},...,u_{q}$ with $c(u_{i})=c(v)+1$. By this property of $c$,
$\tilde{f}_{\theta}$ is an eigenvector of $A$, with eigenvalue $\theta+q\theta^{-1}$.
Therefore, $\tilde{f}_{\theta}$ spans a representation space of $H_{0}$
isomorphic to $V_{\theta}$, which we call the \emph{sectorial model
of $V_{\theta}$}.

The relative distance to the ray will be used again in Proposition
\ref{prop:Kesten's}, and similar considerations can be used to derive
the (twisted) Satake isomorphism.

While $\tilde{f}_{\theta}$ realizes $V_{\theta}$ as a subrepresentation
of $\mathbb{C}^{V_{T}}$, there exists an infinite number of vertices
$v\in V_{T}$ with $\tilde{f}_{\theta}(v)=1$, so the function is
not in $L^{p}(V_{T})$ for any $p<\infty$. To obtain a representation
with functions of controlled $L^{p}$ norm, look at the vertex $v_{0}\in V_{T}$
that is the start of the ray $R$. Let $f_{\theta}=\rho_{v_{0}}\left(\tilde{f}_{\theta}\right)$,
where $\rho_{v_{0}}\colon\mathbb{C}^{V_{T}}\rightarrow\mathbb{C}^{V_{T}}$
is the spherical average operator of the introduction: for $f\in\mathbb{C}^{V_{T}}$,
$(\rho_{v_{0}}\left(f\right))(v_{0})=f(v_{0})$ and for $v\ne v_{0}$,
\begin{align*}
(\rho_{v_{0}}\left(f\right))(v) & =\frac{1}{\#\{v':d(v_{0},v)=d(v_{0},v')\}}\sum_{v':d(v_{0},v)=d(v_{0},v')}f(v')\\
 & =\frac{1}{(q+1)q^{d(v_{0},v)-1}}(A_{d(v_{0},v)}f)(v_{0}).
\end{align*}

Since $\tilde{f}_{\theta}$ spans $V_{\theta}$ we have $\left(A_{k}\tilde{f}_{\theta}\right)(v_{0})=A_{k}(\theta)\tilde{f}_{\theta}(v_{0})=A_{k}(\theta)$,
so explicitly $f_{\theta}(v_{0})=1$ and for $v\ne v_{0}$, 
\[
f_{\theta}(v)=\frac{1}{(q+1)q^{d(v_{0},v)-1}}A_{d(v_{0},v)}(\theta).
\]

We claim that $f_{\theta}$ also spans a representation which is isomorphic
to $V_{\theta}$, that is, $A_{k}$ acts on $f_{\theta}$ by $A_{k}(\theta)$.
We call the resulting representation the \emph{spherical model} of
$V_{\theta}$, or the \emph{geometric realization }of $V_{\theta}$.
The claim can be proven directly, but also follows from the following
interesting lemma:
\begin{lem}
\label{lem:Commuting-Lemma}The operator $\rho_{v_{0}}$ commutes
with the action of $H_{0}$ on $\mathbb{C}^{V_{T}}$.
\end{lem}

The intuition for the lemma is that $H_{0}$ commutes with automorphisms
and $\rho_{v_{0}}$ is the ``average'' of all automorphisms fixing
$v_{0}$. Formalizing this intuition is left to the reader.

As for the $L^{p}$-norm of $f_{\theta}$, we have:
\begin{prop}
\label{prop:theta->tempered}Let $\left|\theta\right|\ge q\left|\theta\right|^{-1}$.
Then $f_{\theta}\in L^{p}(V_{T})$ for $2\le p<\infty$ such that
$\left|\theta\right|<q^{\nicefrac{\left(p-1\right)}{p}}$, and $f_{\theta}\not\in L^{p}(V_{T})$
for $2\le p<\infty$ such that $\left|\theta\right|\ge q^{\nicefrac{\left(p-1\right)}{p}}$.
For $p=\infty$, for $\sqrt{q}\le\left|\theta\right|\le q$ it holds
that $f_{\theta}\in L^{\infty}(V_{T})$ and for $\left|\theta\right|>\left|q\right|$
it holds that $f_{\theta}\not\notin L^{\infty}(V_{T})$.
\end{prop}

\begin{proof}
We have $f_{\theta}(v)=\frac{1}{(q+1)q^{d(v_{0},v)-1}}A_{d(v_{0},v)}(\theta)$
and there are $\left(q+1\right)q^{k-1}$ vertices of distance $k$
from $v_{0}$. 

First consider $p<\infty$. Then $\left\Vert f_{\theta}\right\Vert _{p}^{p}=1+\sum_{k\ge1}\left(q+1\right)q^{k-1}\left(\frac{1}{(q+1)q^{k-1}}\left|A_{k}(\theta)\right|\right)^{p}$. 

Write $a_{k}=\left(\left(q+1\right)q^{k-1}\right)^{1-p}\left|A_{k}(\theta)\right|^{p}=\left(\left(q+1\right)q^{-1}q^{k}\right)^{1-p}\left|A_{k}(\theta)\right|^{p}$
for the $k$-th element of the resulting series. Let $C=\left(\left(q+1\right)q^{-1}\right)^{1-p}$.
By Lemma \ref{lem:Growth Lemma} we have for $k\ge1$, $a_{k}\le C\left(k+1\right)^{p}q^{k(1-p)}\left|\theta\right|^{kp}$,
and for an infinite number of $k$-s $a_{k}\ge2^{-3p}Cq^{k(1-p)}\left|\theta\right|^{kp}$.
Therefore if $\left|\theta\right|<q^{\nicefrac{\left(p-1\right)}{p}}$
then $\limsup a_{k}^{1/k}<1$ and $\left\Vert \tilde{f}_{\theta}\right\Vert _{p}^{p}<\infty$,
and if $\left|\theta\right|\ge q^{\nicefrac{\left(p-1\right)}{p}}$
then $\left\Vert f_{\theta}\right\Vert _{p}^{p}=\infty$. 

For $p=\infty$, by Lemma \ref{lem:Growth Lemma}, $\left|f_{\theta}\left(v\right)\right|$
is not bounded for $\left|\theta\right|>q$, so $f_{\theta}\notin L^{\infty}(V_{T})$.
Since for $p<\infty$, $L^{p}\left(V_{T}\right)\subset L^{\infty}(V_{T})$,
it remains to consider $\left|\theta\right|=q$. Then $\left|A_{k}\left(\theta\right)\right|\le A_{k}\left(q\right)=\left(q+1\right)q^{k-1}$,
so $\left|f_{\theta}\left(v\right)\right|\le1$ and $f_{\theta}\in L^{\infty}(V_{T})$. 
\end{proof}
The calculations motivate us to define:
\begin{defn}
Given a representation $\left(\pi,V\right)$ of $H_{0}$, $u\in V$
and $\varphi\in V^{\ast}$ we call the linear functional $c_{\varphi,u}\colon H_{0}\rightarrow\mathbb{C}$,
$c_{\varphi,u}(h)=\left\langle \varphi,\pi\left(h\right)u\right\rangle $
a \emph{matrix coefficient} of $V$. 

For every matrix coefficient we associate a \emph{geometric realization}
$f_{\varphi,u}^{v_{0}}=f_{\varphi,u}\in\mathbb{C}^{V_{T}}$ given
by 
\[
f_{\varphi,u}(v)=\frac{1}{(q+1)q^{d(v_{0},v)-1}}\left\langle \varphi,\pi\left(A_{d(v_{0},v)}\right)u\right\rangle .
\]

We say that $V$ is \emph{$p$-finite }if for every $u\in V$ and
$\varphi\in V^{\ast}$, we have that $f_{\varphi,u}(v)\in L^{p}(V_{T})$,
or equivalently (for $p<\infty$) 
\[
1+\sum_{k\ge1}\left((q+1)q^{k-1}\right)^{1-p}\left|\left\langle \varphi,\pi\left(A_{k}\right)u\right\rangle \right|^{p}<\infty.
\]

We say that $V$ is \emph{$p$-tempered} if it is $p'$-finite for
every $p'>p$. 
\end{defn}

Notice that $\infty$-finite representations are also $\infty$-tempered
by this definition.

The representation $V_{\theta}$ is one dimensional, so it has only
one matrix coefficient up to scale. We can then conclude:
\begin{cor}
\label{cor:theta to tempered}The representation $V_{\theta}$ is
$p$-tempered if and only if $\max\left\{ \left|\theta\right|,q\left|\theta\right|^{-1}\right\} \le q^{\nicefrac{\left(p-1\right)}{p}}$. 

A finite dimensional representation which is a direct sum of irreducible
representations $V=\oplus_{i}V_{\theta_{i}}$ is $p$-tempered if
and only if each $V_{\theta_{i}}$ is $p$-tempered.
\end{cor}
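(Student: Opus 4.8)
The plan is to reduce the corollary to the growth estimates of Lemma~\ref{lem:Growth Lemma}, which already pin down the exact rate at which $\left|A_k(\theta)\right|$ grows. The first statement concerns a single irreducible $V_\theta$: here the space $V_\theta$ is one-dimensional, so up to scalars there is a unique matrix coefficient, namely $h\mapsto h(\theta)$, and its geometric realization is precisely the function $f_\theta$ studied in Proposition~\ref{prop:theta->tempered}. Thus $V_\theta$ being $p'$-finite is literally the statement $f_\theta\in L^{p'}(V_T)$. By Proposition~\ref{prop:theta->tempered}, assuming W.L.O.G. $\left|\theta\right|\ge q\left|\theta\right|^{-1}$, we have $f_\theta\in L^{p'}(V_T)$ exactly when $\left|\theta\right|<q^{(p'-1)/p'}$. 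I would then take the intersection over all $p'>p$: $V_\theta$ is $p$-tempered iff $\left|\theta\right|<q^{(p'-1)/p'}$ for every $p'>p$, and since $q^{(p'-1)/p'}$ is continuous and strictly increasing in $p'$ with value $q^{(p-1)/p}$ at $p'=p$, this holds iff $\left|\theta\right|\le q^{(p-1)/p}$. Restoring the symmetry $\theta\leftrightarrow q\theta^{-1}$ gives the stated condition $\max\{\left|\theta\right|,q\left|\theta\right|^{-1}\}\le q^{(p-1)/p}$.

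For the second statement I would argue that $p$-temperedness of a direct sum decouples into a condition on each summand. The key point is that a matrix coefficient of $V=\oplus_i V_{\theta_i}$ is a sum of matrix coefficients of the individual $V_{\theta_i}$: writing $u=\sum_i u_i$ and $u^\ast=\sum_i u_i^\ast$ under the dual decomposition $V^\ast=\oplus_i V_{\theta_i}^\ast$, one has $\langle u^\ast, A_k u\rangle = \sum_i \langle u_i^\ast, A_k u_i\rangle$ because $A_k$ preserves each summand. Hence the geometric realization $f_{u,u^\ast}$ is the finite sum $\sum_i f_{u_i,u_i^\ast}$. Since $L^{p'}(V_T)$ is a vector space closed under finite sums, if each $V_{\theta_i}$ is $p'$-finite then so is $V$. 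This gives the easy ("if") direction immediately.

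The converse direction is where I expect the main obstacle: from $V$ being $p$-tempered I must extract $p$-temperedness of each individual $V_{\theta_i}$. The natural approach is to choose $u$ and $u^\ast$ supported on a single summand $V_{\theta_i}$ (i.e.\ $u=u_i$, $u^\ast=u_i^\ast$ with all other components zero), which immediately produces a matrix coefficient of $V$ equal to the chosen matrix coefficient of $V_{\theta_i}$, so its realization lies in $L^{p'}$. If the $\theta_i$ are \emph{distinct} this is unproblematic, but if several summands share the same Satake parameter the decomposition into ``summands'' is only well-defined up to a choice of basis, and one must check the argument still isolates the relevant growth. The cleanest fix is to observe that for any fixed $\theta$, every matrix coefficient of $V_\theta$ is a scalar multiple of $h\mapsto h(\theta)$ (since $V_\theta$ is one-dimensional), so $V_\theta$ is $p$-tempered iff the single function $f_\theta$ is, regardless of multiplicity; then projecting $V$ onto any one-dimensional summand via $u,u^\ast$ supported there recovers exactly $f_{\theta_i}$ up to scalar, forcing $f_{\theta_i}\in L^{p'}$ for all $p'>p$. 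I would finish by invoking the first statement of the corollary to translate this back into the parameter condition $\max\{\left|\theta_i\right|,q\left|\theta_i\right|^{-1}\}\le q^{(p-1)/p}$ for each $i$.
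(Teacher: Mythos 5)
Your argument is correct and follows exactly the route the paper intends: the paper states this corollary as an immediate consequence of Proposition \ref{prop:theta->tempered} (with no written proof), and your reduction of the one-dimensional case to $f_\theta$ plus the limiting argument in $p'$, together with the decomposition of matrix coefficients over direct summands, is precisely that deduction spelled out. Your worry about repeated Satake parameters is a non-issue since the decomposition $V=\oplus_i V_{\theta_i}$ is given and choosing $u,u^\ast$ supported on any fixed summand already isolates a scalar multiple of $f_{\theta_i}$, but your resolution of it is fine.
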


\begin{rem}
Matrix coefficient are very standard in representation theory, and
in particular in representation theory of $p$-adic algebraic groups.
The notion of geometric realization is not standard. The notions of
sectorial model, and spherical model are not standard, and correspond
to the Poincaré disk model and Poincaré half-plane model of the hyperbolic
plane. The notion of a tempered representation is usually refers to
what we call a $2$-tempered representation, and the notion of a $p$-tempered
representation is not standard. 
\end{rem}

\subsection{\label{subsec:Action-on-Finite-graphs}Action on Finite Graphs}

Let $X$ be a finite, connected, $\left(q+1\right)$-regular graph.
As $H_{0}$ is freely generated by $A$, an action of an operator
$A_{0}$ on a vector space $V$ extends to a representation $\left(\pi,V\right)$
of $H_{0}$, given by $\pi(A)=A_{0}$. Therefore the standard action
of the vertex adjacency operator $A_{X}$ on $\mathbb{C}^{V_{X}}\cong L^{2}(V_{X})$
extends to a representation $\left(\pi_{X},L^{2}(V_{X})\right)$ of
$H_{0}$, given by $\pi_{X}\left(A\right)=A_{X}$. Moreover, with
respect to the standard $L^{2}$-norm on $V_{X}$ the operator $A_{X}$
is self-adjoint, so it is diagonalizable and has real eigenvalues.
By looking at the maximal value in absolute value of an eigenvector
of $A_{X}$, each eigenvalue of $A_{X}$ is bounded in absolute value
by $q+1$, so its spectrum is within the range $\left[-q-1,q+1\right]$.
Therefore the representation $\left(\pi_{X},L^{2}(V_{X})\right)$
of $H_{0}$ is a finite direct sum of one dimensional representations,
and each Satake parameter $\theta$ of such a representation satisfies
that $\lambda=\theta+q\theta^{-1}$ is real and of absolute value
$\le q+1$.

Solving $\theta,\tilde{\theta}=\frac{\lambda\pm\sqrt{\lambda^{2}-4q}}{2}$,
we have two options: either $\left|\lambda\right|\le2\sqrt{q}$ (the
Ramanujan range) in which case $\left|\theta\right|=\sqrt{q}$ and
$V_{\theta}$ is 2-tempered, or $2\sqrt{q}\le\left|\lambda\right|\le q+1$,
$\theta$ is real and $1\le\left|\theta\right|\le q$ (i.e.~$-q\le\theta\le-1$
or $1\le\theta\le q$), and $V_{\theta}$ is $p$-tempered for $p$
such that $\max\{\left|\theta\right|,q\left|\theta\right|^{-1}\}\le q^{\nicefrac{\left(p-1\right)}{p}}$.

The eigenvalue $q+1$ for $A$ is only achieved on constant functions
on $X$. Similarly, the eigenvalue $-q-1$ appears if and only if
$X$ is bipartite and is achieved on functions that are constant on
each side of the graph, with one side negative of the other. Ignoring
these two cases we seek the decomposition of the $H_{0}$-representation
$\left(\pi_{X},L_{00}^{2}(V_{X})\right)$, as in the introduction.
We can now prove Theorem \ref{thm:Basic graph claim}:
\begin{proof}[Proof of Theorem \ref{thm:Basic graph claim}]
Let $\lambda$ be the largest absolute value of an eigenvalue of
$A_{X}$ on $L_{00}^{2}(V_{X})$. By Corollary \ref{cor:theta to tempered},
$\lambda\le q^{\nicefrac{1}{p}}+q^{\nicefrac{\left(p-1\right)}{p}}$
if and only if $L_{00}^{2}(V_{X})$ is $p$-tempered. 

Note that the function $\rho_{v_{0}}(\tilde{f})$ as in the introduction
is a special case of a geometric realization of $L_{00}^{2}(V_{X})$.
Let $\tilde{v}_{0}$ be a projection of $v_{0}\in V_{T}$ to $V_{X}$,
and denote by $\boldsymbol{1}_{\tilde{v}_{0}}\in L^{2}(V_{X})$ the
function whose value is $1$ on $\tilde{\ensuremath{v}}_{0}$ and
$0$ elsewhere. We have that for $f\in L_{00}^{2}(V_{X})$
\[
c_{\boldsymbol{1}_{\tilde{v}_{0}},f}(h)=\left\langle \boldsymbol{1}_{\tilde{v}_{0}},\pi_{X}\left(h\right)f\right\rangle =\left(\pi_{X}\left(h\right)f\right)\left(\tilde{v}_{0}\right)
\]
is a matrix coefficient, and its corresponding geometric realization
$f_{\boldsymbol{1}_{\tilde{v}_{0}},f}^{v_{0}}$ equals $\rho_{v_{0}}\left(\tilde{f}\right)$
(note that $\boldsymbol{1}_{\tilde{v}_{0}}\notin L_{00}^{2}\left(V_{X}\right)$,
but still defines a linear functional on $L_{00}^{2}\left(V_{X}\right)$).
Therefore if $\lambda\le q^{\nicefrac{1}{p}}+q^{\nicefrac{\left(p-1\right)}{p}}$
then $\rho_{v_{0}}\left(\tilde{f}\right)\in L^{p+\epsilon}(V_{T})$
for every $\epsilon>0$.

As for the other implication, notice that every matrix coefficient
of $L_{00}^{2}(V_{X})$ is a finite linear sum of matrix coefficients
of the form $c_{\boldsymbol{1}_{\tilde{v}_{0}},\tilde{f}}(h)=\left(\pi_{X}\left(h\right)\tilde{f}\right)(\tilde{v}_{0})$,
for $f\in L_{00}^{2}(V_{X})$, $\tilde{v}_{0}\in V_{X}$. Therefore
if $\rho_{v_{0}}\left(\tilde{f}\right)\in L^{p+\epsilon}(V_{T})$
for every $v_{0}\in V_{T}$, $f\in L_{00}^{2}(V_{X})$ then every
geometric realization of $L_{00}^{2}(V_{X})$ is in $L^{p+\epsilon}(V_{T})$
and $L_{00}^{2}(V_{X})$ is $p$-tempered.
\end{proof}

\subsection{\label{subsec:L_p-Spectrum-of-Hecke}The $L^{p}$-Spectrum of Hecke
Operators}

In this subsection we explain the connection between the notion of
$p$-temperedness and the spectrum of Hecke operators on $L^{p}(V_{T})$. 

Recall that the \emph{norm} bounded operator $h$ on a Banach space
$V$ is $\left\Vert h\right\Vert =\sup_{\left\Vert v\right\Vert =1}\left\Vert hv\right\Vert $.
The \emph{eigenvalues} of $h$ is the set of $\lambda\in\mathbb{C}$
such that there exists $0\ne v\in V$, with $hv=\lambda v$. The \emph{approximate
point spectrum} of $h$ is the set of $\lambda\in\mathbb{C}$ such
that for every $\epsilon>0$ there exists $0\ne v\in V$, with $\left\Vert h-\lambda v\right\Vert <\epsilon\left\Vert v\right\Vert $.
The \emph{spectrum} of $h$ is the set of $\lambda\in\mathbb{C}$
such that $h-\lambda$ has no bounded inverse. The \emph{residual
spectrum} is the complement in the spectrum of the approximate point
spectrum. It is well known that the norm of $h$ bounds the absolute
value of every $\lambda$ in its spectrum.

We will need the following lemmas in our calculations:
\begin{lem}
\label{lem:sum of numbers lemma}Let $x_{1},...,x_{m}\in\mathbb{C}$.
Then $\left|\sum_{i=1}^{m}x_{i}\right|^{p}\le m^{p-1}\sum_{i=1}^{m}\left|x_{i}\right|^{p}$
for every $p\ge1$, with an equality if all the numbers are equal.
\end{lem}

\begin{proof}
First, $\left|\sum_{i=1}^{m}x_{i}\right|^{p}\le\left(\sum_{i=1}^{m}\left|x_{i}\right|\right)^{p}$.
By the convexity of $f(x)=x^{p}$ in $\mathbb{R}_{\ge0}$ we have
$\left(\frac{1}{m}\sum_{i=1}^{m}\left|x_{i}\right|\right)^{p}\le\frac{1}{m}\sum_{i=1}^{m}\left|x_{i}\right|^{p}$.
The equality part is trivial.
\end{proof}
\begin{lem}
\label{lem:sum of numbers lemma-1}Let $X=X_{0}\cup X_{1}$ be a biregular
graph, such that every $x\in X_{0}$ is connected to $K_{0}$ vertices
in $X_{1}$, and every $y\in X_{1}$ is connected to $K_{1}$ vertices
in $X_{0}$. 

Let $\tilde{A}\colon\mathbb{C}^{X_{0}}\rightarrow\mathbb{C}^{X_{1}}$
be the adjacency operator from $X_{0}$ to $X_{1}$, i.e.~$\tilde{A}f(y)=\sum_{x\sim y}f(x)$.
Then as an operator $\tilde{A}\colon L^{p}(X_{0})\rightarrow L^{p}(X_{1})$,
we have $\left\Vert \tilde{A}\right\Vert _{p}\le K_{0}^{\nicefrac{1}{p}}K_{1}^{\nicefrac{\left(p-1\right)}{p}}$,
with an equality if the graph is finite.
\end{lem}

\begin{proof}
For $f\in L^{p}(X_{0})$, we have 
\begin{align*}
\left\Vert \tilde{A}f\right\Vert _{p}^{p} & =\sum_{y\in X_{1}}\left|\tilde{A}f(y)\right|^{p}=\sum_{y\in X_{1}}\left|\sum_{x\sim y}f(x)\right|^{p}\le\sum_{y\in X_{1}}K_{1}^{p-1}\sum_{x\sim y}\left|f(x)\right|^{p}\\
 & =K_{1}^{p-1}\sum_{x\in X_{0}}\left|f(x)\right|^{p}\sum_{y\sim x}1=K_{1}^{p-1}K_{0}\left\Vert f\right\Vert _{p}^{p}.
\end{align*}

The inequality is a result of Lemma \ref{lem:sum of numbers lemma}.
It is an equality if $f$ is constant, and if the graph is finite
such a function is in $L^{p}(X_{0})$.
\end{proof}
The following proposition shows that the $L^{p}$-spectrum of Hecke
operators must contain certain elements.
\begin{prop}
\label{prop:Temperd derives wea containment}Let $h\in H_{0}$. If
$V_{\theta}$ is $p$-tempered then $h(\theta)$ is an eigenvalue
of $h$ on $L^{p'}(V_{T})$ for every $p'>p$, and $h(\theta)$ is
in the approximate point spectrum of $h$ on $L^{p}(V_{T})$.
\end{prop}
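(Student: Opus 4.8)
The plan is to split the statement into its two assertions—that $h(\theta)$ is a genuine eigenvalue on $L^{p'}(V_T)$ for $p'>p$, and that it lies in the approximate point spectrum on $L^p(V_T)$—and to produce the witnessing functions explicitly from the geometric realization of $V_\theta$ constructed in Section~\ref{sec:Geometric-Realization}. The natural candidate eigenvector is $f_\theta=\rho_{v_0}\circ\tilde f_\theta$, the spherical model of $V_\theta$, which spans a copy of $V_\theta$ inside $\mathbb{C}^{V_T}$. Since $f_\theta$ spans $V_\theta$, the operator $h$ acts on it by the scalar $h(\theta)$, so $hf_\theta=h(\theta)f_\theta$ as elements of $\mathbb{C}^{V_T}$. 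The only thing to check for the first assertion is membership in the right space: by Corollary~\ref{cor:theta to tempered}, $V_\theta$ being $p$-tempered means it is $p'$-finite for every $p'>p$, which is precisely the statement that $f_\theta\in L^{p'}(V_T)$. Thus for every $p'>p$ the nonzero function $f_\theta$ lies in $L^{p'}(V_T)$ and is an honest eigenvector with eigenvalue $h(\theta)$.

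For the second assertion I would pass to the borderline exponent $p$ itself, where $f_\theta$ need no longer be in $L^p$, and construct approximate eigenvectors by truncation. The idea is to set $f_\theta^{(N)}$ equal to $f_\theta$ on the ball of radius $N$ about $v_0$ and zero outside, and to show that these truncations are approximate eigenvectors: $\|(h-h(\theta))f_\theta^{(N)}\|_p / \|f_\theta^{(N)}\|_p \to 0$ as $N\to\infty$. Because $h\in H_0$ is row and column finite—it moves mass by a bounded distance, say within radius $K$ once $h$ is expressed in the basis $A_k$—the commutator $(h-h(\theta))f_\theta^{(N)}$ is supported in an annulus of fixed width $K$ near the boundary sphere of radius $N$, where $hf_\theta^{(N)}$ fails to agree with $h(\theta)f_\theta^{(N)}$. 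So $\|(h-h(\theta))f_\theta^{(N)}\|_p^p$ is controlled by the mass of $f_\theta$ on $O(1)$ spheres around radius $N$, while $\|f_\theta^{(N)}\|_p^p$ is the full partial sum up to radius $N$.

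The decisive point is therefore a comparison between the boundary-annulus contribution and the total mass. Using the formula $\|f_\theta\|_p^p=1+\sum_{k\ge1}a_k$ from the proof of Proposition~\ref{prop:theta->tempered}, with $a_k=((q+1)q^{k-1})^{1-p}|A_k(\theta)|^p$, the numerator is bounded by a constant times $\sum_{k=N-K}^{N}a_k$ and the denominator is $\sum_{k\le N}a_k$. At the critical exponent $\max\{|\theta|,q|\theta|^{-1}\}=q^{(p-1)/p}$, Lemma~\ref{lem:Growth Lemma} gives $a_k\le(k+1)^p q^{k(1-p)}|\theta|^{kp}$, which at the borderline is polynomially bounded in $k$; hence $\sum_{k\le N}a_k$ grows (at least polynomially) while any block of $K$ consecutive terms near $k=N$ contributes only a vanishing fraction of the whole partial sum. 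This is the step I expect to be the main obstacle, since it requires the partial sums $\sum_{k\le N}a_k$ to diverge, or at least to grow fast enough that a fixed-width tail is asymptotically negligible; the lower bound in Lemma~\ref{lem:Growth Lemma}, giving $a_k\ge Cq^{k(1-p)}|\theta|^{kp}$ for infinitely many $k$, guarantees the denominator does not stay bounded, so the ratio tends to zero along a subsequence of $N$. Choosing $N$ along that subsequence produces, for each $\epsilon>0$, a nonzero $v=f_\theta^{(N)}$ with $\|(h-h(\theta))v\|_p<\epsilon\|v\|_p$, which is exactly the definition of $h(\theta)$ lying in the approximate point spectrum of $h$ on $L^p(V_T)$.
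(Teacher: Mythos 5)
Your first assertion is handled exactly as in the paper: $f_\theta$ spans a copy of $V_\theta$ inside $\mathbb{C}^{V_T}$, and $p$-temperedness places it in $L^{p'}(V_T)$ for every $p'>p$. For the second assertion you take a genuinely different route. The paper does not truncate; it damps, setting $f_\theta^\epsilon(v)=f_\theta(v)(1-\epsilon)^{d(v,v_0)}$, checks via $\limsup a_k^{1/k}\le1$ that $f_\theta^\epsilon\in L^p(V_T)$, and then uses the fact that adjacent vertices differ in distance to $v_0$ by exactly $1$ to bound the pointwise error $\left|(A-A(\theta))f_\theta^\epsilon(v)\right|$ by $2\epsilon$ times a local sum of $|f_\theta^\epsilon|$. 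This yields $\left\Vert (A-A(\theta))f_\theta^\epsilon\right\Vert_p\le C\epsilon\left\Vert f_\theta^\epsilon\right\Vert_p$ with an explicit constant, uniformly over all vertices and with no passage to a subsequence; general $h$ is then handled by the remark that $A$ generates $H_0$. Your sharp cutoff $f_\theta^{(N)}$ also works: by finite propagation of $h$ the error is indeed supported on, and controlled by, an annulus of bounded width, and you treat general $h$ directly rather than through the generator $A$. The price is that the critical comparison is more delicate and you only obtain approximate eigenvectors along a subsequence of $N$.

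The one step you must repair is the justification of that comparison. ``The denominator does not stay bounded, so the ratio tends to zero along a subsequence'' is a non sequitur: at the boundary $|\theta|=q^{(p-1)/p}$ the annulus contribution is only bounded above by $(K+1)(N+1)^p$, which also grows with $N$, so unboundedness of $S_N=\sum_{k\le N}a_k$ by itself does not make the ratio small. The correct argument uses the \emph{upper} bound of Lemma \ref{lem:Growth Lemma}: at the critical parameter $a_k\le(k+1)^p$, hence $S_N\le(N+1)^{p+1}$ is polynomially bounded. If the annulus sum $S_{N+K}-S_{N-K-1}$ exceeded $\delta S_{N-K-1}$ for all large $N$, the partial sums would grow geometrically, contradicting the polynomial bound; therefore $\liminf_N\left(S_{N+K}-S_{N-K-1}\right)/S_N=0$, which is exactly what you need. (When $\max\{|\theta|,q|\theta|^{-1}\}<q^{(p-1)/p}$ strictly, $f_\theta\in L^p(V_T)$ already and is an honest eigenvector, so only the boundary case requires this.) With that fix your proof is complete.
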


\begin{proof}
The geometric realization of $V_{\theta}$ provides us with a function
$f_{\theta}\in\cap_{p'>p}L^{p'}(V_{T})$ which is an eigenvector of
$h$ with eigenvalue $h(\theta)$. The first claim follows.

For the second claim, let $\epsilon>0$ and define $f_{\theta}^{\epsilon}\in\mathbb{C}^{V_{T}}$
by $f_{\theta}^{\epsilon}(v)=f_{\theta}(v)(1-\epsilon)^{d(v,v_{0})}$.
We claim that $f_{\theta}^{\epsilon}\in L^{p}(V_{T})$. Using the
same arguments as in Proposition \ref{prop:theta->tempered}, let
$a_{k}$, $a_{k}^{\epsilon}$ be the $k$-th elements in the series
in the calculations of $\left\Vert f_{\theta}\right\Vert _{p}^{p}$,
$\left\Vert f_{\theta}^{\epsilon}\right\Vert _{p}^{p}$. Then $\limsup a_{k}^{1/k}\le1$.
Since $a_{k}^{\epsilon}=a_{k}(1-\epsilon)^{kp}$, $\limsup\left(a_{k}^{\epsilon}\right)^{1/k}=(1-\epsilon)^{p}\limsup a_{k}^{1/k}<1$,
and the series of $\left\Vert f_{\theta}^{\epsilon}\right\Vert _{p}^{p}$
converges.

Let us calculate $\left\Vert Af_{\theta}^{\epsilon}-A(\theta)f_{\theta}^{\epsilon}\right\Vert _{p}$.
Assume that $\epsilon<\nicefrac{1}{2}$. For $v\in V_{T}$,
\begin{eqnarray*}
\left|\left(Af_{\theta}^{\epsilon}-A(\theta)f_{\theta}^{\epsilon}\right)(v)\right|^{p} & = & \left|\left(Af_{\theta}^{\epsilon}-(1-\epsilon)^{d(v,v_{0})}Af_{\theta}\right)(v)+\left((1-\epsilon)^{d(v,v_{0})}Af_{\theta}-A(\theta)f_{\theta}^{\epsilon}\right)(v)\right|^{p}\\
 & = & \left|\left(Af_{\theta}^{\epsilon}-(1-\epsilon)^{d(v,v_{0})}Af_{\theta}\right)(v)+0\right|^{p}\\
 & = & \left|\sum_{v'\sim v}\left(f_{\theta}^{\epsilon}(v')-(1-\epsilon)^{d(v,v_{0})}f_{\theta}(v')\right)\right|^{p}\\
 & = & \left|\sum_{v'\sim v}\left((1-\epsilon)^{d(v',v_{0})}f_{\theta}(v')-(1-\epsilon)^{d(v,v_{0})}f_{\theta}(v')\right)\right|^{p}\\
 & \le & (q+1)^{p-1}\sum_{v'\sim v}\left|1-(1-\epsilon)^{d(v,v_{0})-d(v',v_{0})}\right|^{p}\left|f_{\theta}^{\epsilon}(v')\right|^{p}\\
 & \le & (q+1)^{p-1}2^{p}\epsilon^{p}\sum_{v'\sim v}\left|f_{\theta}^{\epsilon}(v')\right|^{p}.
\end{eqnarray*}

The first inequality follows from Lemma \ref{lem:sum of numbers lemma}.
For the second inequality, since $\left|d(v,v_{0})-d(v',v_{0})\right|=1$
and $\epsilon<\nicefrac{1}{2}$, we have $\left|1-(1-\epsilon)^{d(v,v_{0})-d(v',v_{0})}\right|\le2\epsilon$.

Summing over all $v\in V_{T}$, we get $\left\Vert Af_{\theta}^{\epsilon}-A(\theta)f_{\theta}^{\epsilon}\right\Vert _{p}^{p}<C\epsilon^{p}\left\Vert f_{\theta}^{\epsilon}\right\Vert _{p}^{p}$,
for $C=2^{p}\left(q+1\right)^{p}$. Therefore as $\epsilon\rightarrow0$,
the $f_{\theta}^{\epsilon}$ are approximate eigenvectors for the
approximate eigenvalue $A(\theta)$ of $A$. 

Finally, since $A$ generates $H_{0}$, $h(\theta)$ is an approximate
eigenvalue of $h$, with $f_{\theta}^{\epsilon}$ as approximate eigenvectors.
\end{proof}
The following proposition bounds the $L^{p}$-norm of Hecke operators:
\begin{prop}
\label{prop:Kesten's}Let $p\ge2$. The norm (and therefore the absolute
value of every element of the spectrum) of $A_{k}$ on $L^{p}(V_{T})$
is bounded by $\left\Vert A_{k}\right\Vert _{p}\le A_{k}\left(q^{\nicefrac{\left(p-1\right)}{p}}\right)\le\left(k+1\right)q^{\nicefrac{k\left(p-1\right)}{p}}$.

In particular, the norm of $A$ is bounded by $\left\Vert A\right\Vert _{p}\le A\left(q^{\nicefrac{\left(p-1\right)}{p}}\right)=q^{\nicefrac{1}{p}}+q^{\nicefrac{\left(p-1\right)}{p}}$.
\end{prop}

\begin{proof}
Consider an infinite ray $R$ on the tree. Recall from the discussion
in Subsection \ref{subsec:Geometric-Realization} that every vertex
$v$ has one neighbor $u_{0}^{v}$ with relative distance $c(u_{0}^{v})=c(v)-1$
and $q$ neighbors $u_{1}^{v},...,u_{q}^{v}$ with $c(u_{i}^{v})=c(v)+1$.

Define $h_{0},h_{1}\colon\mathbb{C}^{V_{T}}\rightarrow\mathbb{C}^{V_{T}}$
as follows: let $f\in\mathbb{C}^{V_{T}}$. Then $h_{0}f(v)=\sum_{i=1}^{q}f(u_{i}^{v})$,
i.e.~the sum of $f$ on the $q$ vertices that have greater relative
distance. Similarly, $h_{1}f(v)=f(u_{0}^{v})$ is the value of $f$
on the single neighbor of $v$ that has shorter relative distance.

We first prove the proposition for $A$. By definition, we have $A=h_{0}+h_{1}$.
We claim that 
\begin{eqnarray*}
\left\Vert h_{0}f\right\Vert _{p} & \le & q^{\nicefrac{\left(p-1\right)}{p}}\left\Vert f\right\Vert _{p}\\
\left\Vert h_{1}f\right\Vert _{p} & = & q^{\nicefrac{1}{p}}\left\Vert f\right\Vert _{p}.
\end{eqnarray*}

The equality is immediate, since every value in (the series of) $\left\Vert h_{1}f\right\Vert _{p}^{p}$
is a value in (the series of) $\left\Vert f\right\Vert _{p}^{p}$,
while each value in $\left\Vert f\right\Vert _{p}^{p}$ appears $q$
times in $\left\Vert h_{1}f\right\Vert _{p}^{p}$. The inequality
follows from Lemma \ref{lem:sum of numbers lemma}, since each value
in $\left\Vert h_{0}f\right\Vert _{p}^{p}$ is a sum of $q$ values
in $\left\Vert f\right\Vert _{p}^{p}$, and each value in $\left\Vert f\right\Vert _{p}^{p}$
appears in exactly one such sum. 

Therefore $\left\Vert A\right\Vert _{p}\le\left\Vert h_{0}\right\Vert _{p}+\left\Vert h_{1}\right\Vert _{p}\le q^{\nicefrac{1}{p}}+q^{\nicefrac{\left(p-1\right)}{p}}$.

The proof for $A_{k}$ is a direct generalization: we can write $A_{k}=h_{0}+....+h_{k}$,
where:
\begin{itemize}
\item $h_{0}f(v)$ is the sum of $f$ on the $q^{k}$ vertices $u$, with
$d(v,u)=k$ and $c(u)-c(v)=k$ 
\item $h_{k}f(v)$ is the value of $f$ on the single vertex $u$, with
$d(v,u)=k$ and $c(u)-c(v)=-k$.
\item $h_{i}f(v)$, $0<i<k$ is the sum of $f$ on the $\left(q-1\right)q^{k-i-1}=\left(1-q^{-1}\right)q^{k-i}$
vertices $u$ with $d(v,u)=k$ and $c(u)-c(v)=k-2i$.
\end{itemize}
Write for simplicity $\theta_{p}=q^{\nicefrac{\left(p-1\right)}{p}}$.
Then:
\begin{eqnarray*}
\left\Vert h_{0}\right\Vert _{p} & \le & q^{\nicefrac{k\left(p-1\right)}{p}}=\theta_{p}^{k}\\
\left\Vert h_{k}\right\Vert _{p} & = & q^{\nicefrac{k}{p}}=\left(q\theta_{p}^{-1}\right)^{k}\\
\left\Vert h_{i}\right\Vert _{p} & \le & (1-q^{-1})q^{\nicefrac{\left(k-i\right)(p-1)}{p}}q^{\nicefrac{i}{p}}=(1-q^{-1})\theta_{p}^{k-i}\left(q\theta_{p}^{-1}\right)^{i}.
\end{eqnarray*}

The bounds for $h_{0}$ and $h_{k}$ are proved similarly to the bounds
in the calculations for $A$. Let us prove the bounds for $0<i<k$:
build a bipartite (infinite) directed graph $G_{i}$, with $X_{0}=V_{T}\times\{0\}$,
$X_{1}=V_{T}\times\{1\}$. Connect $\left(u,0\right)$ to $\left(v,1\right)$
if $d(v,u)=k$ and $c(u)-c(v)=k-2i$. Then the adjacency operator
from $\mathbb{C}^{X_{0}}$ to $\mathbb{C}^{X_{1}}$ acts exactly like
the operator $h_{i}$ acts on $\mathbb{C}^{V_{T}}$. With the notations
of Lemma \ref{lem:sum of numbers lemma-1}, have that $K_{0}=(q-1)q^{i-1}=\left(1-q^{-1}\right)q^{i}$
and $K_{1}=(q-1)q^{k-i-1}=\left(1-q^{-1}\right)q^{k-i}$. Now apply
Lemma \ref{lem:sum of numbers lemma-1} and organize to arrive to
the given bounds.

Therefore,
\begin{align*}
\left\Vert A_{k}\right\Vert _{p} & \le\left\Vert h_{0}\right\Vert _{p}++...+\left\Vert h_{k}\right\Vert _{p}\\
 & \le\theta_{p}^{k}+\left(q\theta_{p}^{-1}\right)^{k}+\sum_{i=1}^{k-1}\left(1-q^{-1}\right)\theta_{p}^{k-i}\left(q\theta_{p}^{-1}\right)^{i}\\
 & =A_{k}\left(\theta_{p}\right)=A_{k}\left(q^{\nicefrac{\left(p-1\right)}{p}}\right).
\end{align*}

We finish by using Lemma \ref{lem:Growth Lemma}.
\end{proof}
We can now prove Theorem \ref{thm:Lp bound}:
\begin{cor}
For $2\le p\le\infty$, the spectrum of $A$ on $L^{p}(V_{T})$ is
$\left\{ \theta+q\theta^{-1}:\theta\in\mathbb{C},\,q^{\nicefrac{1}{p}}\le\left|\theta\right|\le q^{\nicefrac{\left(p-1\right)}{p}}\right\} $,
and each point of it belongs to the approximate point spectrum. For
$p=\infty$ each element of the spectrum is an eigenvalue. For $2\le p<\infty$
the set of eigenvalues is the interior $\left\{ \theta+q\theta^{-1}:\theta\in\mathbb{C},\,q^{\nicefrac{1}{p}}<\left|\theta\right|<q^{\nicefrac{\left(p-1\right)}{p}}\right\} $.

For $1\le p<2$ the spectrum is the same as for $p/\left(p-1\right)$.
For $1<p\le2$ the interior $\left\{ \theta+q\theta^{-1}:\theta\in\mathbb{C},\,q^{\nicefrac{\left(p-1\right)}{p}}<\left|\theta\right|<q^{\nicefrac{1}{p}}\right\} $
is in the residual spectrum, and the boundary $\left\{ \theta+q\theta^{-1}:\theta\in\mathbb{C},\,\left|\theta\right|=q^{\nicefrac{\left(p-1\right)}{p}}\right\} $
belongs to the approximate point spectrum but is not an eigenvalue.
For $p=1$ the entire spectrum belongs to the residual spectrum.
\end{cor}

\begin{proof}
Assume $p\ge2$. By Corollary \ref{cor:theta to tempered} and Proposition
\ref{prop:Temperd derives wea containment} every point in the interior
is an eigenvalue and every point in the boundary is in the approximate
point spectrum.

For $2\le p<\infty$ points in the boundary are not eigenvalues since
if $f\in L^{p}(V_{T})$ is an eigenvector with eigenvalue $A(\theta$),
and $f(v_{0})\ne0$, then also $\rho_{v_{0}}f\in L^{p}(V_{T})$ and
$\rho_{v_{0}}f(v_{0})\ne0$. But then necessarily $\rho_{v_{0}}f=f(v_{0})f_{\theta}$
and $f_{\theta}\notin L^{p}(V_{T})$ for $\theta$ in the boundary
by Corollary \ref{cor:theta to tempered}. If $A(\theta)\in\mbox{Spec}_{L^{p}(V_{T})}A$,
then by the Satake isomorphism $A_{k}(\theta)\in\mbox{Spec}_{L^{p}(V_{T})}A_{k}$
for every $k\ge1$ and in particular the norm of $A_{k}$ on $L^{p}(V_{T})$
is at least $\left|A_{k}(\theta)\right|$. Assume $\left|\theta\right|\notin\left[q^{\nicefrac{1}{p}},q^{\nicefrac{\left(p-1\right)}{p}}\right]$
and $\left|\theta\right|\ge q\left|\theta\right|^{-1}$, then $\left|\theta\right|>q^{\nicefrac{\left(p-1\right)}{p}}$.
By Lemma \ref{lem:Growth Lemma}b there exist infinitely many $k>0$,
with $\left\Vert A_{k}\right\Vert _{p}\ge\left|A_{k}(\theta)\right|\ge0.001\left|\theta\right|^{k}$.
By Proposition \ref{prop:Kesten's}, $\left\Vert A_{k}\right\Vert _{p}\le\left(k+1\right)\left(q^{\nicefrac{\left(p-1\right)}{p}}\right)^{k}$,
and since $\left|\theta\right|>q^{\nicefrac{\left(p-1\right)}{p}}$
we have a contradiction.

For $1\le p<2$, note that the action of $A$ on $L^{p/\left(p-1\right)}(V_{T})$
is the dual of the action of $A$ on $L^{p}(V_{T})$ (this is also
true for $p=1$, $p/\left(p-1\right)=\infty$). There are no eigenvalues
since $L^{p}(V_{T})\subset L^{2}(V_{T})$ and there are no eigenvectors
for $L^{2}(V_{T})$. By basic facts of spectral theory, the spectrum
of an operator is equal to the spectrum of its dual. Moreover, for
reflexive Banach spaces, the discrete spectrum (i.e.~the eigenvalues
union the residual spectrum), and the continous spectrum (i.e.~the
approximate point spectrum without the eigenvalues) of dual operators
agree. As there are no eigenvalues, for $1<p<2$ the interior is in
the residual spectrum and the boundary is in the continuous spectrum.
For $p=1$ one uses the fact that the residual spectrum of an operator
without eigenvalues is equal to the set of eigenvalues of its dual.
\end{proof}
\begin{rem}
For $p=2$, Proposition \ref{prop:Temperd derives wea containment}
and Proposition \ref{prop:Kesten's} are versions of Theorem 1 and
Theorem 2 of \cite{cowling1988almost}. The proof of Proposition \ref{prop:Kesten's}
is based on the proof of Theorem 2 in \cite{cowling1988almost}. A
similar combinatorial proof for $p=2$ is given in \cite{angel2015non}
Theorem 4.2.
\end{rem}

\section{\label{sec:Operators-on-Edges}Operators on Edges}

\subsection{\label{subsec:The-Directed-Edge-Hecke}The Iwahori-Hecke Algebra}

We wish to extend the $L^{p}$-theory to operators acting on the directed
edges of the tree or the graph. The theory here is slightly more complicated,
since the algebra is not commutative and the operators are not self-adjoint.
Since the proofs are very similar to the vertex case, some of them
are omitted. In any case, a generalized full treatment is given in
\cite{kamber2016lpcomplex}. 

We denote by $E_{T}$ the directed edges of the tree and by $E_{X}$
the directed edges of the (finite, non-oriented) graph $X$ from the
introduction. Each non-oriented edge is counted twice in $E_{T}$
and $E_{X}$.
\begin{defn}
Let $h_{s_{0}},h_{s_{1}},h_{\tau},h_{NB}\colon\mbox{\ensuremath{\mathbb{C}}}^{E_{T}}\rightarrow\mbox{\ensuremath{\mathbb{C}}}^{E_{T}}$
be the following operators:
\begin{eqnarray*}
h_{s_{0}}\tilde{f}(x,y) & = & \sum_{y'\sim x,y'\ne y}\tilde{f}(x,y')\\
h_{s_{1}}\tilde{f}(x,y) & = & \sum_{x'\sim y,x'\ne x}\tilde{f}(x',y)\\
h_{\tau}\tilde{f}(x,y) & = & \tilde{f}(y,x)\\
h_{NB}\tilde{f}(x,y) & = & h_{\tau}h_{s_{0}}\tilde{f}=h_{s_{1}}h_{\tau}\tilde{f}=\sum_{x'\sim y,x'\ne x}\tilde{f}(y,x').
\end{eqnarray*}

The \emph{Iwahori-Hecke algebra} $H_{\phi}$, or the \emph{directed
edge Hecke algebra} is the algebra of operators acting on $\mathbb{C}^{E_{T}}$
generated by the operators $h_{s_{0}},h_{s_{1}}$ and $h_{\tau}$. 
\end{defn}

We will show in the beginning of Subsection \ref{subsec:Representation-Theory-of}
that there is a natural representation $\left(\pi_{X},L^{2}\left(E_{X}\right)\right)$
of $H_{\phi}$. For now we analyse the action of $H_{\phi}$ on $\mbox{\ensuremath{\mathbb{C}}}^{E_{T}}$.
Notice, however, that the operator $\pi_{X}\left(h_{NB}\right)$ is
Hashimoto's non-backtracking operator (see \cite{hashimoto1989zeta}).
The non-backtracking operator is used in the theory of the graph Zeta
function, which is defined as 
\[
\zeta_{X}(u)=\frac{1}{\det\left(1-u\pi_{X}\left(h_{NB}\right)\right)}.
\]

Our discussion here is indeed similar to the discussion of Hashimoto
on the Zeta function in \cite{hashimoto1989zeta}.
\begin{defn}
Let $\left(\hat{W},S\right)$ be the extended Coxeter group $\hat{W}=\left\langle \left.s_{0},s_{1},\tau\right|s_{0}^{2}=s_{1}^{2}=\tau^{2}=1,\,\tau s_{0}=s_{1}\tau\right\rangle $,
with its set of generators $S=\left\{ s_{0},s_{1},\tau\right\} $.

Let $w_{NB}\in\hat{W}$ be the element $\tau s_{0}=s_{1}\tau$.
\end{defn}

\begin{lem}
\label{lem:Coxeter_group_structure}Each $w\in\hat{W}$ can be written
uniquely as $w=\tau^{\delta_{\tau}}w_{NB}^{m}s_{1}^{\delta_{1}}$,
for $\delta_{\tau},\delta_{1}\in\{0,1\}$ and $m\ge0$.
\end{lem}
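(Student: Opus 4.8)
The plan is to identify $\hat{W}$ with the infinite dihedral group and read off the normal form there. Set $a=w_{NB}$ and $b=\tau$. Using $\tau^{2}=1$ and $\tau s_{0}=s_{1}\tau$ one checks the two relations $b^{2}=1$ and $bab^{-1}=a^{-1}$: indeed $w_{NB}^{-1}=(\tau s_{0})^{-1}=s_{0}\tau$, while $\tau w_{NB}\tau^{-1}=\tau^{2}s_{0}\tau=s_{0}\tau$, so $\tau w_{NB}\tau^{-1}=w_{NB}^{-1}$. Since $s_{0}=\tau w_{NB}$ and $s_{1}=w_{NB}\tau$, the elements $w_{NB}$ and $\tau$ generate $\hat{W}$, so there is a surjection $\phi\colon D_{\infty}=\langle a,b:b^{2}=1,\,bab^{-1}=a^{-1}\rangle\to\hat{W}$ sending $a\mapsto w_{NB}$, $b\mapsto\tau$.

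The main point is that this surjection is an isomorphism, i.e. that $\hat{W}$ does not collapse further. I would prove this by constructing the inverse: define $\psi\colon\hat{W}\to D_{\infty}$ on generators by $\psi(\tau)=b$, $\psi(s_{0})=ba$, $\psi(s_{1})=ab$, and verify that all four defining relations of $\hat{W}$ hold ($b^{2}=(ba)^{2}=(ab)^{2}=1$ and $b\cdot ba=ab\cdot b$, each a one-line computation in $D_{\infty}$ using $ba=a^{-1}b$). Then $\psi\circ\phi$ and $\phi\circ\psi$ are the identity on generators, so $\phi$ is an isomorphism with $\psi(w_{NB})=a$, $\psi(\tau)=b$.

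Given the isomorphism, the lemma reduces to the standard normal form of $D_{\infty}$: every element is uniquely $a^{n}$ or $a^{n}b$ with $n\in\mathbb{Z}$, because $\langle a\rangle\cong\mathbb{Z}$ has index two with coset representative $b$. It then remains to check that the parametrization $(\delta_{\tau},m,\delta_{1})\mapsto\tau^{\delta_{\tau}}w_{NB}^{m}s_{1}^{\delta_{1}}=b^{\delta_{\tau}}a^{m}(ab)^{\delta_{1}}$ is a bijection onto this list. A short four-case computation (using $ba^{m}=a^{-m}b$) shows that $(0,0)$ gives $a^{m}$ for $m\ge0$ and $(1,1)$ gives $a^{-(m+1)}$ for $m\ge0$, together covering each $a^{n}$ exactly once, while $(0,1)$ gives $a^{m+1}b$ and $(1,0)$ gives $a^{-m}b$, together covering each $a^{n}b$ exactly once. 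Hence the map is a bijection, which yields both existence and uniqueness of the claimed form.

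The only real obstacle is uniqueness, which is precisely the injectivity of $\phi$ — equivalently, that $w_{NB}$ has infinite order and that no unexpected relations hold in $\hat{W}$. Producing the explicit inverse homomorphism $\psi$ settles this cleanly. Alternatively, one could exhibit a faithful action of $\hat{W}$ (for instance on the directed edges along a ray in $T$, where $w_{NB}$ acts as the non-backtracking shift and hence visibly has infinite order), but the $D_{\infty}$ identification is the shortest route and makes the four-case bookkeeping transparent.
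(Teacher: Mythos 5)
Your proof is correct, but it takes a genuinely different route from the paper. The paper first peels off $\tau$, writing $w=\tau^{\delta_{\tau}'}w'$ with $w'$ in the subgroup $\left\langle s_{0},s_{1}\right\rangle \cong\mathbb{Z}/2\ast\mathbb{Z}/2$ (this uses $\tau s_{0}\tau=s_{1}$, so conjugation by $\tau$ preserves that subgroup), then invokes the alternating-word normal form $w'=s_{0}^{\alpha_{0}}\left(s_{1}s_{0}\right)^{m'}s_{1}^{\delta_{1}}$ and substitutes $s_{0}=\tau w_{NB}$, $w_{NB}^{2}=s_{1}s_{0}$ to land directly on the claimed form. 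You instead re-present $\hat{W}$ on the generators $w_{NB},\tau$ as the infinite dihedral group $\mathbb{Z}\rtimes\mathbb{Z}/2$ and match the parametrization $(\delta_{\tau},m,\delta_{1})$ against the normal form $a^{n}$, $a^{n}b$ by a four-case check. Both arguments ultimately rest on a standard normal form for (a presentation of) $D_{\infty}$ --- yours for $\left\langle a,b:b^{2}=1,\,bab^{-1}=a^{-1}\right\rangle$, the paper's for $\left\langle s_{0},s_{1}:s_{0}^{2}=s_{1}^{2}=1\right\rangle$ together with the uniqueness of the splitting $\tau^{\delta_{\tau}'}w'$. What your version buys is an explicit treatment of the uniqueness/non-collapse issue via the inverse homomorphism $\psi$, which the paper leaves implicit (it asserts ``uniquely'' at each stage without exhibiting a model; one could also extract this from Lemma \ref{lem;Tree_is_building}, as you note in your alternative). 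What the paper's version buys is that the target expression $\tau^{\delta_{\tau}}w_{NB}^{m}s_{1}^{\delta_{1}}$ drops out of the substitution with no case analysis over the sign of the exponent.
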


\begin{proof}
By the relations involving $\tau$ every $w\in\hat{W}$ may be written
uniquely as $w=\tau^{\delta_{\tau}^{\prime}}w'$ where $\delta_{\tau}^{\prime}\in\{0,1\}$
and $w'$ is a product of $s_{0}$ and $s_{1}$ only. Since $s_{0}^{2}=s_{1}^{2}=1$,
$w'$ may be written uniquely as $w'=s_{0}^{\alpha_{0}}\left(s_{1}s_{0}\right)^{m'}s_{1}^{\delta_{1}}$,
with $m'\ge0$, $\alpha_{0},\delta_{1}\in\{0,1\}$. Since $w_{NB}^{2}=s_{1}\tau\tau s_{0}=s_{1}s_{0}$,
\begin{align*}
w & =\tau^{\delta_{\tau}^{\prime}}w'=\tau^{\delta_{\tau}^{\prime}}s_{0}^{\alpha_{0}}\left(s_{1}s_{0}\right)^{m^{\prime}}s_{1}^{\delta_{1}}\\
 & =\tau^{\delta_{\tau}^{\prime}}\tau^{\alpha_{0}}w_{NB}^{2m'+\alpha_{0}}s_{1}^{\delta_{1}}=\tau^{\delta_{\tau}}w_{NB}^{m}s_{1}^{\delta_{1}},
\end{align*}
with $\delta_{\tau}\equiv\delta_{\tau}^{\prime}+\alpha_{0}\mod2$,
$m=2m'+\alpha_{0}$.

As one may recover $m',\alpha_{0},\delta_{\tau}^{\prime}$ from $m,\delta_{\tau}$,
it also proves uniqueness.
\end{proof}
\begin{defn}
The \emph{Coxeter length function $l\colon\hat{W}\rightarrow\mathbb{N}$
is defined by $l(\tau^{\delta_{\tau}}w_{NB}^{m}s_{1}^{\delta_{1}})=m+\delta_{1}$.}
\end{defn}

For $w=\tau^{\delta_{\tau}}w_{NB}^{m}s_{1}^{\delta_{1}}\in\hat{W}$
we denote $h_{w}=h_{\tau}^{\delta_{\tau}}h_{NB}^{m}h_{s_{1}}^{\delta_{1}}$.

Notice that our two different notations for $h_{s_{0}},h_{\tau},h_{s_{1}}$
agree with each other and that $h_{w_{NB}}=h_{NB}$.

For $e\in E_{T}$ we denote by $\boldsymbol{1}_{e}\in\mbox{\ensuremath{\mathbb{C}}}^{E_{T}}$
the function whose value is $1$ on $e$ and $0$ elsewhere.
\begin{lem}
\label{lem:Tree_is_building}Let $e_{0},e_{1}\in E_{T}$. Then:

1. The function $h_{w}\boldsymbol{1}_{e_{1}}$ is non-zero on $q^{l(w)}$
edges.

2. There exists a unique $w\in\hat{W}$ such that $h_{w}\boldsymbol{1}_{e_{1}}$
is non-zero on $e_{0}$, and then $h_{w^{-1}}1_{e_{0}}$ is non-zero
on $e_{1}$.
\end{lem}

\begin{proof}
Proved easily by the decomposition $w=\tau^{\delta_{\tau}}w_{NB}^{m}s_{1}^{\delta_{1}}$
and induction on $l(w)$.
\end{proof}
\begin{defn}
\label{def;Distance definition}For $e_{0},e_{1}\in E_{T}$, the \emph{distance
}$d(e_{0},e_{1})\in\hat{W}$ is the unique $w\in\hat{W}$ such that
$h_{w}1_{e_{1}}$ is supported on $e_{0}$.
\end{defn}

Notice that by Lemma \ref{lem:Tree_is_building}, if $d(e_{0},e_{1})=w$
then $d\left(e_{1},e_{0}\right)=w^{-1}$.

While this definition is not standard in combinatorics, it is standard
when treating the tree $T$ as a building. The abstract reason for
this definition is the following lemma, which is left for the reader.
\begin{lem}
\label{Lem:Distance vs automorphism}Let $e_{0},e_{1},e_{0}^{\prime},e_{1}^{\prime}\in E_{T}$.
Then $d\left(e_{0},e_{1}\right)=d\left(e_{0}^{\prime},e_{1}^{\prime}\right)$
if and only if there exists a tree automorphism $\gamma\in\mbox{Aut}(T)$
such that $\gamma\left(e_{0}\right)=e_{0}^{\prime}$ and $\gamma\left(e_{1}\right)=e_{1}^{\prime}$.
\end{lem}

As a result of Lemma \ref{lem:Tree_is_building} and the definition
of $d(e_{0},e_{1})$, we may write for $w\in\hat{W}$ and $f\in\mathbb{C}^{E_{T}}$:
\[
h_{w}f(e_{0})=\sum_{e_{1}:d(e_{0},e_{1})=w}f(e_{1}).
\]

We can now describe $H_{\phi}$ as follows:
\begin{lem}
\label{lem:H_phi relations}The algebra $H_{\phi}$ is isomorphic
to the algebra defined abstractly by the generating operators $h_{s_{0}},h_{s_{1}},h_{\tau}$,
and the relations:
\begin{eqnarray*}
h_{s_{0}}^{2} & = & q\cdot Id+(q-1)h_{s_{0}}\\
h_{s_{1}}^{2} & = & q\cdot Id+(q-1)h_{s_{1}}\\
h_{\tau}^{2} & = & Id\\
h_{\tau}h_{s_{0}} & = & h_{s_{1}}h_{\tau}.
\end{eqnarray*}

The algebra $H_{\phi}$ is also isomorphic to the algebra which is
the linear span of the basis operators $h_{w}$, $w\in\hat{W}$, with
the relations above and the relation for $w\in\hat{W}$ and $s\in S$
\begin{align*}
h_{w}h_{s}=h_{ws} & \,\,\,\,\,\text{if}\,\,s=\tau\text{ or }l(ws)=l(w)+1.
\end{align*}
\end{lem}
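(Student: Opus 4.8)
The plan is to identify $H_{\phi}$ with the presented algebra in the usual two-sided way: produce a surjection from the abstractly presented algebra onto $H_{\phi}$, bound its dimension from above by showing the presented algebra is spanned by the normal-form monomials $h_{w}$, and pin the dimension from below by showing these $h_{w}$ act as linearly independent operators. Once $\{h_{w}\}$ is a basis, the second description follows by reading off the multiplication rule in the length-increasing case.

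First I would verify that the operators $h_{s_{0}},h_{s_{1}},h_{\tau}$ on $\mathbb{C}^{E_{T}}$ satisfy the four listed relations. The identity $h_{\tau}^{2}=Id$ is immediate, and $h_{\tau}h_{s_{0}}=h_{s_{1}}h_{\tau}$ is already recorded in the definition of $h_{NB}$. For the quadratic relation I would expand $h_{s_{0}}^{2}\tilde{f}(x,y)$ as a double sum over neighbors of $x$ and count multiplicities: the value $\tilde{f}(x,y)$ is produced $q$ times, while each $\tilde{f}(x,y'')$ with $y''\ne y$ is produced $q-1$ times (the neighbors of $x$ distinct from both $y$ and $y''$), yielding $h_{s_{0}}^{2}=q\cdot Id+(q-1)h_{s_{0}}$; the relation for $h_{s_{1}}$ is symmetric. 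These checks show that mapping the abstract generators to the corresponding operators respects all relations, so it defines a surjective algebra homomorphism $\Phi$ from the presented algebra $\mathcal{A}$ onto $H_{\phi}$.

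Next I would prove that $\mathcal{A}$ is spanned as a vector space by the monomials $h_{w}=h_{\tau}^{\delta_{\tau}}h_{NB}^{m}h_{s_{1}}^{\delta_{1}}$ attached via Lemma \ref{lem:Coxeter_group_structure} to the normal form $w=\tau^{\delta_{\tau}}w_{NB}^{m}s_{1}^{\delta_{1}}$. It is enough to check that $\mbox{span}\{h_{w}\}$ is closed under right multiplication by each generator. Right multiplication by $h_{s_{1}}$ either lengthens the normal form, giving $h_{ws_{1}}$ when $\delta_{1}=0$, or, when $\delta_{1}=1$, the quadratic relation rewrites $h_{s_{1}}^{2}=q\cdot Id+(q-1)h_{s_{1}}$ and hence $h_{w}h_{s_{1}}=qh_{ws_{1}}+(q-1)h_{w}$. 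Right multiplication by $h_{\tau}$ sends each $h_{w}$ to a single $h_{w'}$, using $h_{s_{1}}h_{\tau}=h_{NB}$ and $h_{NB}h_{\tau}=h_{s_{1}}$ to absorb the trailing $h_{\tau}$; and $h_{s_{0}}$ is reduced to these two cases via $h_{s_{0}}=h_{\tau}h_{s_{1}}h_{\tau}$ (which follows from $h_{\tau}h_{s_{0}}=h_{s_{1}}h_{\tau}$ and $h_{\tau}^{2}=Id$). In every instance the product lands back in $\mbox{span}\{h_{w}\}$, so $\mathcal{A}=\mbox{span}\{h_{w}\}$. The main obstacle lies here: one must be sure this reduction is exhaustive, i.e. that multiplying a normal form by any generator never escapes the span, and this relies essentially on the explicit structure of $\hat{W}$ given in Lemma \ref{lem:Coxeter_group_structure} together with the length bookkeeping $l(\tau^{\delta_{\tau}}w_{NB}^{m}s_{1}^{\delta_{1}})=m+\delta_{1}$.

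Finally I would show that the operators $\{h_{w}\}$ are linearly independent in $H_{\phi}$, which forces $\Phi$ to be injective. Fixing an edge $e_{1}$, Lemma \ref{lem;Tree_is_building} gives $h_{w}1_{e_{1}}=\sum_{d(e_{0},e_{1})=w}1_{e_{0}}$, and since every $e_{0}$ has a unique distance $d(e_{0},e_{1})\in\hat{W}$, these functions have pairwise disjoint supports; any relation $\sum_{w}c_{w}h_{w}=0$, applied to $1_{e_{1}}$ and evaluated at a single $e_{0}$, then yields $c_{d(e_{0},e_{1})}=0$, so all $c_{w}$ vanish. Since $\Phi$ carries the spanning set of $\mathcal{A}$ to a linearly independent family in $H_{\phi}$, it is bijective, establishing the first description. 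The second description is now immediate: $\{h_{w}\}$ is a basis, and the asserted rule $h_{w}h_{s}=h_{ws}$ for $l(ws)=l(w)+1$ is exactly the length-increasing case of the multiplication computed in the spanning step.
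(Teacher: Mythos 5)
Your proof is correct and follows essentially the same strategy as the paper: verify the four relations directly on $\mathbb{C}^{E_{T}}$, use Lemma \ref{lem;Tree_is_building} to get linear independence of the $h_{w}$, and match the two abstract descriptions via the normal form of Lemma \ref{lem:Coxeter_group_structure}. The only difference is that you carry out in detail the spanning/reduction argument identifying the presented algebra with the span of the $h_{w}$, which the paper leaves as a standard exercise.
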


\begin{rem}
\label{rem:Iwahori-Hecke relations}The relations of Lemma \ref{lem:H_phi relations}
are called the \emph{Iwahori-Hecke relations}. It easily follows from
them that for $w,w'\in\hat{W}$ and $s\in S$:
\begin{align*}
h_{ww} & =h_{w}h_{w'}\,\,\,\,\,\text{if}\,\,l(ww')=l(w)+l(w')\\
h_{w}h_{s} & =qh_{w}+\left(q-1\right)h_{ws}\,\,\,\,\,\text{if}\,\,l(ws)=l(w)-1.
\end{align*}

Some of the relations are actually redundant, as $H_{\phi}$ is generated
by $h_{s_{0}},h_{\tau}$ and the two relations $h_{s_{0}}^{2}=q\cdot Id+(q-1)h_{s_{0}}$
and $h_{\tau}^{2}=Id$.
\end{rem}

\begin{proof}
The fact that the two abstract descriptions are isomorphic is standard
and left to the reader (see \cite{macdonald1996affine} for the general
case). So it is enough to prove that $H_{\phi}$ is isomorphic to
the second description, using a linear basis of the algebra. By Lemma
\ref{lem:Tree_is_building} the $h_{w}\in H_{\phi}$, $w\in\hat{W}$
are indeed linearly independent. The first 4 relations may be verified
directly. The last relation follows from Lemma \ref{lem:Tree_is_building}.
\end{proof}
\begin{lem}
\label{lem:involution}The algebra $H_{\phi}$ has an involution (or
adjunction) $\ast\colon H_{\phi}\rightarrow H_{\phi}$ sending each
$\alpha\cdot h_{w}$, $\alpha\in\mathbb{C},\,w\in\hat{W}$ to $\bar{\alpha}\cdot h_{w^{-1}}$.
\end{lem}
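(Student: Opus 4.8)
The plan is to exploit that $\{h_w:w\in\hat W\}$ is a linear basis of $H_\phi$ (Lemma \ref{lem;Tree_is_building}), so the prescription $\alpha h_w\mapsto\bar\alpha h_{w^{-1}}$ already determines a well-defined conjugate-linear map $\ast\colon H_\phi\to H_\phi$; conjugate-linearity is built into the definition, and $(h^{\ast})^{\ast}=h$ is immediate since $(w^{-1})^{-1}=w$ and $\bar{\bar\alpha}=\alpha$. The only substantive point is that $\ast$ is \emph{anti}-multiplicative, i.e. $(ab)^{\ast}=b^{\ast}a^{\ast}$. Conceptually this is transparent: with respect to the standard inner product $\langle f,g\rangle=\sum_{e\in E_T}f(e)\overline{g(e)}$ on $L^2(E_T)$, the map $\ast$ is nothing but the Hilbert-space adjoint, which is automatically a conjugate-linear anti-involution, and one only needs the geometric symmetry $d(e_0,e_1)=w\iff d(e_1,e_0)=w^{-1}$ to see that the adjoint of $h_w$ is $h_{w^{-1}}$. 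I will instead give the purely algebraic argument, which avoids that identity and uses only Lemma \ref{lem:H_phi relations}.

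By conjugate-linearity it suffices to verify $(h_u h_v)^{\ast}=h_v^{\ast}h_u^{\ast}$ on basis elements. I would reduce this to the case in which $v=s$ is a single generator, by induction on $l(v)$: by the normal form of Lemma \ref{lem:Coxeter_group_structure} every $w$ of positive length admits a reduced expression ending in some $s\in\{s_0,s_1\}$, so $h_v=h_{v'}h_s$ with $l(v')=l(v)-1$, and the inductive step reads $(h_u h_{v'}h_s)^{\ast}=h_s^{\ast}(h_uh_{v'})^{\ast}=h_s^{\ast}h_{v'}^{\ast}h_u^{\ast}=(h_{v'}h_s)^{\ast}h_u^{\ast}=h_v^{\ast}h_u^{\ast}$, using the generator case twice together with the hypothesis for $v'$. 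Note that each generator is self-adjoint, $h_s^{\ast}=h_{s^{-1}}=h_s$ for $s\in\{s_0,s_1,\tau\}$, since these are involutions in $\hat W$.

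It then remains to check $(h_wh_s)^{\ast}=h_sh_{w^{-1}}$ for every basis element $h_w$ and generator $s$, which I would organize by how $l(ws)$ compares to $l(w)$, using $l(w)=l(w^{-1})$ and $(ws)^{-1}=sw^{-1}$. When $l(ws)=l(w)+1$ one has $h_wh_s=h_{ws}$, hence $(h_wh_s)^{\ast}=h_{sw^{-1}}$, which equals $h_sh_{w^{-1}}$ by the left-handed product rule $h_sh_{w^{-1}}=h_{sw^{-1}}$, valid because $l(sw^{-1})=l(w^{-1})+1$. When $l(ws)=l(w)-1$ the quadratic relation $h_s^2=q\cdot Id+(q-1)h_s$ gives $h_wh_s=q\,h_{ws}+(q-1)h_w$, so $(h_wh_s)^{\ast}=q\,h_{sw^{-1}}+(q-1)h_{w^{-1}}$, and the matching left-handed relation (again from the quadratic relation, now with $l(sw^{-1})=l(w^{-1})-1$) yields exactly $h_sh_{w^{-1}}=q\,h_{sw^{-1}}+(q-1)h_{w^{-1}}$. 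The case $s=\tau$ is length-preserving: $h_wh_\tau=h_{w\tau}$ and $(h_wh_\tau)^{\ast}=h_{\tau w^{-1}}=h_\tau h_{w^{-1}}$. The main obstacle here is purely bookkeeping — keeping the inverses and lengths straight and making sure the left-handed product rules are available; these follow from $h_{ww'}=h_wh_{w'}$ for $l(ww')=l(w)+l(w')$ together with the quadratic relations of Lemma \ref{lem:H_phi relations}, so no genuinely new input beyond that lemma is needed.
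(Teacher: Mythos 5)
Your proposal is correct, but it deliberately takes a different route from the paper. The paper's entire proof is the one-line observation you make in your opening paragraph: $h_{w^{-1}}$ is the Hilbert-space adjoint of $h_w$ for the natural inner product on $L^2(E_T)$, which follows from the symmetry $d(e_0,e_1)=w\iff d(e_1,e_0)=w^{-1}$ of the $\hat W$-valued distance (a consequence of Lemma \ref{lem;Tree_is_building}, since each generator $h_{s_0},h_{s_1},h_\tau$ visibly defines a symmetric incidence relation and the general case propagates along reduced words); adjunction is then automatically conjugate-linear, involutive and anti-multiplicative. What your algebraic argument buys is independence from the geometric realization: you verify anti-multiplicativity purely from the presentation of Lemma \ref{lem:H_phi relations}, so the same proof works verbatim for any Iwahori--Hecke algebra given by generators and relations, with unequal parameters, etc. The cost is the Coxeter bookkeeping, and your handling of it is sound: the reduction to $(h_wh_s)^{\ast}=h_sh_{w^{-1}}$ by induction on $l(v)$ is legitimate once one notes (as you implicitly do) that the ``generator case'' extends by conjugate-linearity from basis elements to arbitrary $a\in H_\phi$ in $(ah_s)^{\ast}=h_s^{\ast}a^{\ast}$; the case split on $l(ws)$ versus $l(w)$ together with $l(w^{-1})=l(w)$ and $(ws)^{-1}=sw^{-1}$ closes the argument; and the length-preserving generator $\tau$ is correctly treated via $h_wh_\tau=h_{w\tau}$. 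One small point in your favor: the quadratic rule you use, $h_wh_s=q\,h_{ws}+(q-1)h_w$ for $l(ws)=l(w)-1$, is the correct one (consistent with $h_s^2=q\cdot Id+(q-1)h_s$); the displayed version in the paper's remark has the two terms interchanged, which is a typo you have silently corrected.
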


\begin{proof}
Consider the natural inner product on $L^{2}\left(E_{T}\right)$.
By Lemma \ref{lem:Tree_is_building}, for each $e_{0},e_{1}\in E_{T}$
and $w\in\hat{W}$ we have $\left\langle h_{w}\boldsymbol{1}_{e_{0}},\boldsymbol{1}_{e_{1}}\right\rangle =\left\langle \boldsymbol{1}_{e_{0}},h_{w^{-1}}\boldsymbol{1}_{e_{1}}\right\rangle $.
It follow that $h_{w^{-1}}$ is the adjoint of $h_{w}$ relatively
to the natural inner product on $L^{2}\left(E_{T}\right)$. As taking
an adjoint is an involution, the claim follows.
\end{proof}
The following proposition, analogous to Proposition \ref{prop:Hecke_commutes},
gives an abstract definition of $H_{\phi}$.
\begin{prop}
\label{prop:Hecke_commutes-1}Let $\gamma\in\mbox{Aut}(T)$ be an
automorphism of the tree. Then $\gamma$ acts naturally on $\mathbb{C}^{E_{T}}$
by $\gamma\cdot f(x,y)=f(\gamma^{-1}x,\gamma^{-1}y)$.

The algebra $H_{\phi}$ is the algebra of row and column finite operators
acting on $\mathbb{C}^{E_{T}}$ and commuting with tree automorphisms. 
\end{prop}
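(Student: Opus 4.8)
The plan is to mirror the proof of Proposition \ref{prop:Hecke_commutes}, replacing the scalar distance $d(x,y)\in\mathbb{N}$ by the $\hat{W}$-valued distance $d(e_0,e_1)\in\hat{W}$. The entire argument reduces to one geometric fact, which I will isolate as the main lemma: the orbits of $\mathrm{Aut}(T)$ on ordered pairs of directed edges are exactly the classes $\{(e_0,e_1):d(e_0,e_1)=w\}$, $w\in\hat{W}$. Granting this, both directions go through formally as in the vertex case.

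For the ``only if'' direction, each generator $h_{s_0},h_{s_1},h_\tau$ is defined purely in terms of incidence and orientation of edges, both of which every $\gamma\in\mathrm{Aut}(T)$ preserves, so each generator commutes with the $\mathrm{Aut}(T)$-action and hence so does every element of $H_\phi$. Row and column finiteness is inherited from the basis: by Lemma \ref{lem;Tree_is_building} the function $h_w 1_{e_1}$ is supported on exactly $q^{l(w)}$ edges, which bounds each column of $h_w$; for the rows I would invoke the involution $\ast$, under which $h_{w^{-1}}$ is the adjoint of $h_w$, so that $d(e_1,e_0)=w^{-1}$ iff $d(e_0,e_1)=w$, and since $l(w^{-1})=l(w)$ the set $\{e_1:d(e_0,e_1)=w\}$ is also finite of size $q^{l(w)}$. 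A finite linear combination of the $h_w$ is therefore row and column finite.

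For the ``if'' direction, write $hf(e_0)=\sum_{e_1}\alpha_{e_0,e_1}f(e_1)$. Exactly as in Proposition \ref{prop:Hecke_commutes}, commutation with every $\gamma$ forces $\alpha_{\gamma e_0,\gamma e_1}=\alpha_{e_0,e_1}$, i.e.\ $\alpha$ is $\mathrm{Aut}(T)$-invariant on pairs. By the transitivity lemma, $\alpha_{e_0,e_1}$ then depends only on $w=d(e_0,e_1)$, say $\alpha_{e_0,e_1}=c_w$; comparing with the formula $h_w f(e_0)=\sum_{e_1:d(e_0,e_1)=w}f(e_1)$ gives $h=\sum_{w}c_w h_w$ as operators. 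Finally, row finiteness of $h$ means that for a fixed $e_0$ only finitely many $e_1$ have $\alpha_{e_0,e_1}\neq0$; since each class $\{e_1:d(e_0,e_1)=w\}$ is nonempty (of size $q^{l(w)}$), only finitely many $c_w$ can be nonzero, so the sum is finite and $h\in H_\phi$.

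The hard part is the transitivity lemma: if $d(e_0,e_1)=d(e_0',e_1')$ then some $\gamma\in\mathrm{Aut}(T)$ carries $(e_0',e_1')$ to $(e_0,e_1)$. That automorphisms \emph{preserve} $d$ is immediate, since $d$ is defined through the $h_w$, which commute with $\mathrm{Aut}(T)$; the content is the converse. I would induct on $l(w)$ using the canonical form $w=\tau^{\delta_\tau}w_{NB}^m s_1^{\delta_1}$ of Lemma \ref{lem:Coxeter_group_structure}: the base cases $w\in\{1,\tau\}$ reduce to transitivity of $\mathrm{Aut}(T)$ on directed edges, and the inductive step rests on the homogeneity of $T$, namely that the stabilizer of a directed edge $(x,y)$ acts transitively on the $q$ forward continuations at $y$ and, by iteration, on the edges at each fixed non-backtracking distance, the decorations $\tau$ and $s_1^{\delta_1}$ being absorbed by reversing the terminal edge or choosing its orientation. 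Constructing $\gamma$ then amounts to matching the two combinatorial paths emanating from $e_0$ and $e_0'$ step by step and extending arbitrarily off these paths, which is possible precisely because $T$ is the $(q+1)$-regular tree.
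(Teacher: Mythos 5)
Your proof is correct and follows exactly the route the paper intends: the paper's own proof of Proposition \ref{prop:Hecke_commutes-1} simply says ``As in Proposition \ref{prop:Hecke_commutes},'' and your argument is the faithful edge-analogue of that vertex proof, with the scalar distance replaced by the $\hat{W}$-valued distance and the transitivity of $\mathrm{Aut}(T)$ on pairs of directed edges at fixed $w$-distance correctly identified and justified as the one nontrivial geometric input.
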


\begin{proof}
As in Proposition \ref{prop:Hecke_commutes}, making use of Lemma
\ref{Lem:Distance vs automorphism}.
\end{proof}
It is natural to study $H_{0}$ and $H_{\phi}$ together. We can do
it by defining a larger algebra containing them both. Following Proposition
\ref{prop:Hecke_commutes} and Proposition \ref{prop:Hecke_commutes-1},
one can define:
\begin{defn}
\label{def:Full_Hecke_Algebra}The \emph{full graph Hecke algebra
$H$} is the algebra of row and column finite operators acting on
$\mathbb{C}^{E_{T}}\oplus\mathbb{C}^{V_{T}}$ and commuting with tree
automorphisms. 
\end{defn}

Consider the composition: $\mathbb{C}^{E_{T}}\oplus\mathbb{C}^{V_{T}}\overset{p}{\to}\mathbb{C}^{V_{T}}\overset{A_{m}}{\to}\mathbb{C}^{V_{T}}\overset{i}{\to}\mathbb{C}^{E_{T}}\oplus\mathbb{C}^{V_{T}}$,
where $p$ is the projection and $i$ is the extension by zeros. Using
it, $A_{m}$ extends to an operator acting on $\mathbb{C}^{E_{T}}\oplus\mathbb{C}^{V_{T}}$,
which we will denote by abuse of notations $A_{m}$ again. This extension
belongs to the full graph Hecke algebra $H$. Similarly, and again
by abuse of notations, we may extend each operator $h\in H_{\phi}$
to an operator $h\in H$. In other words, the algebras $H_{0},H_{\phi}$
occur as subalgebras (with different units) of $H$ and $Id_{H}=Id_{H_{0}}+Id_{H_{1}}$.

Define the operators: $u\colon\mathbb{C}^{V_{T}}\to\mathbb{C}^{E_{T}}$,
$d\colon\mathbb{C}^{E_{T}}\to\mathbb{C}^{V_{T}}$ by 
\begin{align*}
uf(x,y) & =f(x)\\
df(x) & =\sum_{y\sim x}f(x,y),
\end{align*}

and extend them similarly to operators acting on $\mathbb{C}^{E_{T}}\oplus\mathbb{C}^{V_{T}}$.
We have that $u,d\in H$, and the following relations hold:
\begin{align*}
ud & =h_{s_{0}}+Id_{H_{\phi}}\\
du & =(q+1)\text{\ensuremath{Id}}_{H_{0}}\\
A & =dh_{\tau}u,\,\,\,\,\,\,A_{m}=dh_{\tau}h_{NB}^{m-1}u.
\end{align*}

One can give a complete description of $H$, either in terms of generators
of an algebra or in terms of a linear basis. We will only give the
description in terms of generators and relations.
\begin{prop}
\label{prop:H-speciication}The algebra $H$ is isomorphic to the
algebra defined abstractly by the generators $Id_{H_{\phi}}$, $h_{s_{0}}$,
$h_{s_{1}}$, $h_{\tau}$, $d$ and $u$, the generating relations
of $H_{\phi}$ (with $Id_{H_{\phi}}$ instead of $Id$, including
the relations saying that $Id_{H_{\phi}}$ is the identity of $H_{\phi}$),
and the relations 
\begin{align*}
du & =h_{s_{0}}+id_{H_{\phi}}\\
u^{2} & =d^{2}=uh=0
\end{align*}
for any $h\in H_{\phi}$.
\end{prop}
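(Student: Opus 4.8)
The plan is to establish the isomorphism by the standard method for such presentations: first verify that the proposed relations actually hold in the concrete algebra $H$ of automorphism-commuting row-and-column-finite operators on $\mathbb{C}^{E_{T}}\oplus\mathbb{C}^{V_{T}}$, and then show that these generators and relations suffice, by producing a spanning set for $H$ as a vector space and matching dimensions. Since the relations of $H_{\phi}$ have already been verified in Lemma \ref{lem:H_phi relations}, and the relation $UD=h_{s_{0}}+Id_{H_{\phi}}$ together with $U^{2}=D^{2}=Uh=0$ can be checked by a direct computation on basis functions $1_{e}$ and $1_{v}$ (using the explicit formulas for $U$ and $D$), the \emph{only if} half -- that the abstract algebra surjects onto $H$ -- reduces to showing that $Id_{H_{\phi}},h_{s_{0}},h_{s_{1}},h_{\tau},D,U$ actually generate $H$ as an algebra.

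\textbf{Generation of $H$.} To see that these elements generate, I would use Proposition \ref{prop:Hecke_commutes} and Proposition \ref{prop:Hecke_commutes-1}, which say that an operator lies in $H_{0}$ (respectively $H_{\phi}$) precisely when it commutes with all tree automorphisms and is row and column finite. The full algebra $H$ decomposes along the four ``block'' maps $\mathbb{C}^{V_{T}}\to\mathbb{C}^{V_{T}}$, $\mathbb{C}^{E_{T}}\to\mathbb{C}^{E_{T}}$, $\mathbb{C}^{V_{T}}\to\mathbb{C}^{E_{T}}$, and $\mathbb{C}^{E_{T}}\to\mathbb{C}^{V_{T}}$. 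The first block is exactly $H_{0}$, which by Proposition \ref{prop:Hecke_commutes} is generated by $A=A_{1}$; and the relation $A=D\tau U$ recovers $A$ from the given generators, so all of $H_{0}$ is reached. The second block is $H_{\phi}$, already generated by $h_{s_{0}},h_{s_{1}},h_{\tau}$. For the two off-diagonal blocks one argues, exactly as in Proposition \ref{prop:Hecke_commutes}, that an automorphism-invariant row-and-column-finite operator $\mathbb{C}^{V_{T}}\to\mathbb{C}^{E_{T}}$ has coefficients depending only on the relative position of an edge and a vertex, hence is a finite combination of operators of the form $h_{w}U$ (lift the vertex to edges, then translate), and dually for $\mathbb{C}^{E_{T}}\to\mathbb{C}^{V_{T}}$ via $D h_{w}$; this expresses every off-diagonal operator in terms of $U$, $D$, and $H_{\phi}$.

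\textbf{The presentation is complete.} For the harder \emph{if} direction -- that no relations beyond those listed are needed -- I would exhibit an explicit linear basis of the abstract algebra and check it has the same size as the concrete one. Using the relations $U^{2}=D^{2}=Uh=0$ and $UD=h_{s_{0}}+Id_{H_{\phi}}$, every word in the generators can be reduced to a normal form in which $U$ appears at most once on the left and $D$ at most once on the right, with an element of $H_{\phi}$ (in its basis $\{h_{w}:w\in\hat W\}$ from Lemma \ref{lem:H_phi relations}) in between; the identity $A_{m}=D\tau h_{NB}^{m-1}U$ confirms that the $H_{0}$-part is captured by words $D(\cdots)U$. Matching this normal form against the geometric basis of $H$ indexed by automorphism-orbits of pairs in $(E_{T}\sqcup V_{T})^{2}$ gives the isomorphism. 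The main obstacle is this last bookkeeping step: one must check that the reduction to normal form does not collapse distinct basis elements and that the count of normal-form words agrees exactly with the orbit count, which requires care precisely because $H$ has the non-standard feature $Id_{H}=Id_{H_{0}}+Id_{H_{\phi}}$ with two different units, so the relation $Uh=0$ (killing the wrong unit on one side) must be tracked consistently throughout.
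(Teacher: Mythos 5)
The paper gives no proof of this proposition (it is ``left as an exercise''), so your proposal can only be judged on its own terms. The first half --- verifying the relations inside $H$ and showing that $Id_{H_{\phi}},h_{s_{0}},h_{s_{1}},h_{\tau},D,U$ generate, by splitting $H$ into its four blocks and handling the off-diagonal blocks with operators of the form $h_{w}U$ and $Dh_{w}$ --- is the right argument and is in the spirit of Proposition \ref{prop:Hecke_commutes} and Proposition \ref{prop:Hecke_commutes-1}.

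The completeness half has a genuine gap, located exactly where you defer the ``bookkeeping.'' A small symptom first: your normal form puts $U$ and $D$ on the wrong sides. Since $U:\mathbb{C}^{V_{T}}\to\mathbb{C}^{E_{T}}$ and $D:\mathbb{C}^{E_{T}}\to\mathbb{C}^{V_{T}}$, the surviving reduced words are $h_{w}$, $h_{w}U$, $Dh_{w}$ and $Dh_{w}U$ (consistent with $A_{m}=D\tau h_{NB}^{m-1}U$, which you quote correctly two lines later); a word of the shape $Uh_{w}D$ is zero in $H$. More seriously, reducing an arbitrary word to this normal form requires killing every subword $hD$ with $h\in H_{\phi}$ --- true in $H$ because $D$ lands in $\mathbb{C}^{V_{T}}$, where $H_{\phi}$ acts by zero --- but $hD=0$ is not among the listed relations and cannot be derived from them: for instance $h_{s_{0}}D=(UD-Id_{H_{\phi}})D=-Id_{H_{\phi}}D$ merely loops back on itself, and a word such as $h_{\tau}Dh_{\tau}D$ contains no reducible subword, so the abstractly presented algebra retains nonzero elements mapping to $0$ in $H$. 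A correct solution must add the mirror relation $hD=0$ (equivalently $Id_{H_{\phi}}D=0$) to the presentation, or at least observe that it is needed; your ``reduction to normal form'' silently uses it. Even granting that relation, the normal-form words are not linearly independent in $H$ (one has $h_{ws_{0}}U=qh_{w}U$, $Dh_{s_{0}w}=qDh_{w}$ and $(DU)^{2}=(q+1)DU$, all of which must be extracted from $UD=h_{s_{0}}+Id_{H_{\phi}}$ before the count of normal forms matches the orbit count), so the ``matching'' step you postpone is where essentially all the content of the exercise lives; as written, the argument does not close.
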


\begin{proof}
Left to the reader.
\end{proof}
\begin{rem}
\label{rem:history remark}This subsection essentially proves that
the $\left(q+1\right)$-regular tree is a building. See \cite{ronan2009lectures}
for an introduction to buildings.

The group $W=\left\langle \left.s_{0},s_{1}\right|s_{0}^{2}=s_{1}^{2}=1\right\rangle $
is the infinite dihedral group, which is an affine Coxeter group,
or an affine Weyl group. Specifying the generators $S=\left\{ s_{0},s_{1}\right\} $
makes $\left(W,S\right)$ into a Coxeter system. Adding an automorphism
$\tau$ which acts on $W$ by $\tau s_{0}=s_{1}\tau$ makes $\hat{W}$,
with the extra data of $\left\{ s_{1},s_{2}\right\} $ and $\tau$
an extended affine Coxeter group, which is more commonly called an
extended affine Weyl group. As is turns out, $\hat{W}$ is generated
by $s_{0},\tau$ and is isomorphic as a group to $W$ with generators
$s_{0},s_{1}$, but they are not isomorphic as Coxeter systems.

Similarly, the Iwahori-Hecke algebra we defined, which is common in
representation theory, is an extended version of the standard Iwahori-Hecke
algebra, which does not include $h_{\tau}$. While the name an ``extended
Iwahori-Hecke algebra'' is appropriate for it, it is called in the
literature either an Iwahori-Hecke algebra, a Hecke algebra, or an
affine Hecke algebra. Standard references for Iwahori-Hecke algebras
and Coxeter groups are \cite{macdonald1996affine,lusztig2003hecke}.

In his fundamental paper \cite{hashimoto1989zeta}, Hashimoto studied
the graph Zeta function using the Iwahori-Hecke algebra and its representation
theory. However, he used its non-extended version, which does not
include $h_{NB}$.

Readers familiar with the Bernstein-Lusztig presentation of the Iwahori-Hecke
algebra may note that $h_{NB}$ plays a crucial role in this presentation,
as it corresponds (in the notations of \cite{macdonald1996affine})
to the operator $Y^{\lambda}$, $\lambda$ a fundamental coweight. 

The full graph Hecke algebra $H$ is not standard in representation
theory.
\end{rem}

\subsection{\label{subsec:Representation-Theory-of}The Representation Theory
of the Iwahori-Hecke Algebra}

Recall from Lemma \ref{lem:involution} that $H_{\phi}$ has an involution
$\ast\colon H_{\phi}\rightarrow H_{\phi}$. Also recall from the Preliminaries
that a representation $\left(\pi,V\right)$ of $H_{\phi}$ is called
\emph{unitary} if there exists an inner product on $V$ satisfying
$\left\langle \pi\left(h\right)v_{1},v_{2}\right\rangle =\left\langle v_{1},\pi\left(h^{*}\right)v_{2}\right\rangle $
for every $v_{1},v_{2}\in V$ and $h\in H$. 

Let $X$ be a $\left(q+1\right)$-regular graph. As $T$ is the universal
cover of $X$ (see \cite[Chapter 6]{hoory2006expander}), we may consider
$X$ as a quotient of $T$ by a discrete cocompact torsion free group
$\Gamma\subset\mbox{Aut}(T)$. Since the action of $H_{\phi}$ on
$\mathbb{C}^{E_{T}}$ commutes with automorphisms we have an action
of $H_{\phi}$ on the $\Gamma$-invariant vectors of $\mathbb{C}^{E_{T}}$,
which we identify with $\mathbb{C}^{E_{X}}\cong L^{2}(E_{X})$, i.e.~functions
on the directed edges of the finite graph. Moreover, this representation
$\left(\pi_{X},L^{2}(E_{X})\right)$ is unitary with respect to the
usual inner product on $L^{2}(E_{X})$.

\subsubsection{Classification of Irreducible Representations}

The following theorem is the Iwahori-Hecke analog of the Satake isomorphism.
\begin{thm}
\label{thm:Classification-Iwahori-Hecke}There exists an embedding
$\Phi\colon H_{\phi}\to M_{2\times2}\left(\mathbb{C}\left(\left[\theta,\theta^{-1}\right]\right)\right)$
($\theta$ indeterminate), given by ($\tilde{\theta}=q\theta^{-1})$:%
\begin{comment}
up- towards the sector. $v_{0}$ pointing upwards from 0 to 1, $v_{1}$
pointing downwards toward 1. going up multiplies by $\theta$. The
vertex below of type 0 (origin of $v_{0}$) is getting 1.
\end{comment}
\begin{align*}
\Phi\left(h_{\tau}\right) & =\left(\begin{array}{cc}
0 & 1\\
1 & 0
\end{array}\right),\,\Phi\left(h_{s_{0}}\right)=\left(\begin{array}{cc}
0 & \tilde{\theta}\\
\theta & q-1
\end{array}\right),\,\Phi\left(h_{s_{1}}\right)=\left(\begin{array}{cc}
q-1 & \theta\\
\tilde{\theta} & 0
\end{array}\right)\\
\Phi\left(h_{NB}^{k}\right) & =\left(\begin{array}{cc}
\theta^{k} & \left(q-1\right)\left(\theta^{k-1}+\theta^{k-2}\tilde{\theta}+...+\tilde{\theta}^{k-1}\right)\\
0 & \tilde{\theta}^{k}
\end{array}\right),
\end{align*}
and for $w=\tau^{\delta_{\tau}}w_{NB}^{m}s_{1}^{\delta_{1}}$,
\[
\Phi(h_{w})=\Phi\left(h_{\tau}\right)^{\delta_{\tau}}\Phi\left(h_{NB}^{m}\right)\Phi\left(h_{s_{1}}\right)^{\delta_{1}}.
\]

For any specific $0\ne\theta\in\mathbb{C}$, the evaluation map $\tilde{\pi}_{\theta}\colon M_{2\times2}\left(\mathbb{C}\left(\left[\theta,\theta^{-1}\right]\right)\right)\to M_{2\times2}\left(\mathbb{C}\right)$
defines a 2-dimensional representation $\left(\pi_{\theta}^{\phi},U_{\theta}\right)$
of $H_{\phi}$, with $\pi_{\theta}^{\phi}=\tilde{\pi}_{\theta}\circ\Phi$
and $U_{\theta}=\mathbb{C}^{2}$. Every irreducible finite dimensional
representation is isomorphic to a quotient of such a representation.
\end{thm}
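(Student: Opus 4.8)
The plan is to treat the embedding and the classification separately. First I would verify that the three matrices $\Phi(h_\tau),\Phi(h_{s_0}),\Phi(h_{s_1})$ satisfy the four defining relations of Lemma \ref{lem:H_phi relations}, namely $\Phi(h_\tau)^2=Id$, $\Phi(h_{s_i})^2=q\,Id+(q-1)\Phi(h_{s_i})$ for $i=0,1$, and $\Phi(h_\tau)\Phi(h_{s_0})=\Phi(h_{s_1})\Phi(h_\tau)$. Each is a one-line $2\times2$ computation using $\theta\tilde\theta=q$. Since Lemma \ref{lem:H_phi relations} presents $H_\phi$ abstractly by exactly these generators and relations, this already shows $\Phi$ is a well-defined algebra homomorphism. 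The displayed formula for $\Phi(h_{NB}^{k})$ then follows by induction on $k$, multiplying $\Phi(h_{NB})=\Phi(h_\tau)\Phi(h_{s_0})$ into $\Phi(h_{NB}^{k-1})$: the upper-triangular shape is preserved and the off-diagonal geometric sum grows by one term.

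For injectivity I would show the matrices $\{\Phi(h_w)\}_{w\in\hat W}$ are $\mathbb{C}$-linearly independent. Put $w$ in the normal form $w=\tau^{\delta_\tau}w_{NB}^{m}s_1^{\delta_1}$ of Lemma \ref{lem:Coxeter_group_structure}, so $\Phi(h_w)=\Phi(h_\tau)^{\delta_\tau}\Phi(h_{NB}^{m})\Phi(h_{s_1})^{\delta_1}$. The matrix $\Phi(h_{NB}^{m})$ is upper triangular, carrying the unique top $\theta$-degree $\theta^{m}$ in the $(1,1)$ slot and the unique bottom degree $\tilde\theta^{m}=q^{m}\theta^{-m}$ in the $(2,2)$ slot. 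Right multiplication by $\Phi(h_{s_1})^{\delta_1}$ raises the top degree to $\theta^{m+\delta_1}=\theta^{\,l(w)}$ and moves it to a column determined by $\delta_1$, while left multiplication by $\Phi(h_\tau)^{\delta_\tau}$ swaps the two rows. Thus for each fixed length $l$ the (at most four) elements with $l(w)=l$ place their top monomial $\theta^{\,l}$ in four distinct matrix entries, and different lengths give different top degrees; hence no nontrivial $\mathbb{C}$-combination vanishes. Specializing $\theta$ to any $0\ne\theta\in\mathbb{C}$ turns $\Phi$ into an algebra map $H_\phi\to M_{2\times2}(\mathbb{C})$, which is the representation $U_\theta$.

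The substance of the theorem is the last sentence. The key preliminary is that $H_\phi$ is generated by $h_\tau$ and $h_{NB}$ alone, since $h_{s_0}=h_\tau h_{NB}$ and $h_{s_1}=h_{NB}h_\tau$ follow from $h_{NB}=h_\tau h_{s_0}=h_{s_1}h_\tau$. Substituting $h_{s_1}=h_{NB}h_\tau$ into the quadratic relation $h_{s_1}^2=(q-1)h_{s_1}+q$ gives $h_{NB}(h_\tau h_{NB}h_\tau)=(q-1)h_{NB}h_\tau+q$, and cancelling the invertible $h_{NB}$ on the left yields the identity \[ h_\tau h_{NB}h_\tau=(q-1)h_\tau+q\,h_{NB}^{-1},\quad\text{equivalently}\quad h_{NB}h_\tau=(q-1)Id+q\,h_\tau h_{NB}^{-1}. \] Now let $V$ be an irreducible finite-dimensional representation. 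Since $h_\tau$ and $h_{s_0}$ are invertible so is $h_{NB}$, hence it has an eigenvector $u$ with $h_{NB}u=\theta u$ for some $\theta\ne0$. The second form of the identity gives $h_{NB}(h_\tau u)=(q-1)u+q\theta^{-1}h_\tau u=(q-1)u+\tilde\theta\,h_\tau u$, so $W=\mathrm{span}\{u,h_\tau u\}$ is $h_{NB}$-stable; it is obviously $h_\tau$-stable, hence stable under all of $H_\phi$. By irreducibility $W=V$, so $\dim V\le2$, and in the ordered basis $(u,h_\tau u)$ the operators $h_{NB}$ and $h_\tau$ act by precisely the matrices $\Phi(h_{NB})$ and $\Phi(h_\tau)$. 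Therefore the linear map $U_\theta\to V$ sending the standard basis to $(u,h_\tau u)$ is a surjective $H_\phi$-homomorphism, exhibiting $V$ as a quotient of $U_\theta$.

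I expect the only real obstacle to be spotting the identity $h_\tau h_{NB}h_\tau=(q-1)h_\tau+q\,h_{NB}^{-1}$: a span built naively from an $h_{NB}$-eigenvector is not visibly invariant, and its invariance---together with the exact match to the matrices defining $U_\theta$---emerges only after rewriting the quadratic relation through $h_{NB}$. Everything else reduces to routine $2\times2$ arithmetic and the standard fact that a nonzero submodule of an irreducible module is the whole module.
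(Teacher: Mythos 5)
Your proposal is correct and follows essentially the same route as the paper: verify the Iwahori--Hecke relations and the linear independence of the $\Phi(h_w)$ for the embedding, then take an $h_{NB}$-eigenvector $u$ and show that $u\mapsto u_0$, $h_\tau u\mapsto u_1$ extends to a surjective homomorphism $U_\theta\to V$. The paper leaves both verifications to the reader ("with some calculations"), and your identity $h_{NB}h_\tau=(q-1)\mathrm{Id}+q\,h_\tau h_{NB}^{-1}$ is precisely the computation being alluded to, so you have simply supplied the omitted details.
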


\begin{proof}
By Lemma \ref{lem:H_phi relations}, one should prove that the set
$\left\{ \Phi(h_{w}):w\in\hat{W}\right\} $, is linearly independent
(over $\mathbb{C}$) and that the Iwahori-Hecke relations hold. We
leave the verification to the reader.

To prove the statement about every irreducible representation, let
$\left(\pi,V\right)$ be a finite dimensional representation of $H_{\phi}$,
and let $\theta$ be an eigenvalue of $\pi\left(h_{NB}\right)$ with
eigenvector $0\ne v_{0}\in V$. Let $u_{0},u_{1}=\pi_{\theta}\left(\tau\right)u_{0}$
be the standard basis of $U_{\theta}$. Define a linear transformation
$\varphi\colon U_{\theta}\rightarrow V$ by $\varphi\left(u_{0}\right)=v_{0}$
and $\varphi\left(u_{1}\right)=v_{1}=\pi\left(\tau\right)v_{0}$.
The fact that $v_{0}$ is an eigenvector of $\pi\left(h_{NB}\right)$
with eigenvalue $\theta$ and the Iwahori-Hecke relations says that
the following relations hold:
\begin{align*}
\pi\left(h_{\tau}\right)v_{0} & =v_{1},\,\pi\left(h_{\tau}\right)v_{1}=v_{0}\\
\pi\left(h_{s_{1}}\right)v_{1} & =\pi\left(h_{NB}\right)v_{0}=\theta v_{0}\\
\pi\left(h_{s_{1}}\right)v_{0} & =\pi\left(h_{s_{1}}^{2}\right)\theta^{-1}v_{1}=\left(q\cdot Id+\left(q-1\right)\pi\left(h_{s_{1}}\right)\right)\theta^{-1}v_{1}=\left(q-1\right)v_{0}+\tilde{\theta}v_{1}\\
\pi\left(h_{s_{0}}\right)v_{1} & =\pi\left(h_{s_{0}}h_{\tau}\right)v_{0}=\pi\left(h_{\tau}h_{s_{1}}\right)v_{0}=\left(q-1\right)v_{1}+\tilde{\theta}v_{0}\\
\pi\left(h_{s_{0}}\right)v_{0} & =\pi\left(h_{\tau}h_{s_{1}}\right)v_{1}=\theta v_{1}.
\end{align*}
 The relations show that $\varphi$ is a homomorphism of representations
of $H_{\phi}$. Therefore if $V$ is irreducible it is equal to the
image to $\varphi$, and therefore $V$ is isomorphic to a quotient
of $U_{\theta}$ (see the Preliminaries).
\end{proof}
\begin{cor}
For every $\theta\ne\pm1,\pm q$ the representation $\left(\pi_{\theta}^{\phi},U_{\theta}\right)$
is irreducible, and is isomorphic to the representation $\left(\pi_{q\theta^{-1}}^{\phi},U_{q\theta^{-1}}\right)$. 

There are four one dimensional representations which occur as quotients
of $\left(\pi_{\pm1}^{\phi},U_{\pm1}\right)$,$\left(\pi_{\pm q}^{\phi},U_{\pm q}\right)$: 

Two \emph{trivial representations} $\left(\pi_{T}^{\phi,\pm},U_{T}^{\pm}\right)$,
where $\tau$ acts by $\pm1$ and $h_{s_{0}},h_{s_{1}}$ act by $q$. 

Two \emph{Steinberg (}or\emph{ special) representations} $\left(\pi_{S}^{\phi,\pm},U_{S}^{\pm}\right)$,
where $\tau$ acts by $\pm1$ and $h_{s_{0}},h_{s_{1}}$ act by $-1$. 
\end{cor}

\begin{proof}
The only possible irreducible representations $\left(\pi,V\right)$
that are not $U_{\theta}$ are of dimension 1. In this case $\pi\left(h_{\tau}\right)$
acts by $\alpha_{\tau}=\pm1$, and $\pi\left(h_{s_{0}}\right)$ acts
by multiplication by a scalar $\alpha_{0}$. By the Iwahori-Hecke
relation $\left(h_{s_{0}}+1\right)\left(h_{s_{0}}-q\right)=0$, we
have $\alpha_{0}=-1$ or $\alpha_{0}=q$. Since $h_{s_{1}}=h_{\tau}h_{s_{0}}h_{\tau}$
the operator $\pi\left(h_{s_{1}}\right)$ also acts by $\alpha_{0}$.
On the other hand, each choice of $\alpha_{\tau}\in\left\{ \text{\ensuremath{\pm}1}\right\} $
and $\alpha_{0}\in\left\{ -1,q\right\} $ defines a one-dimensional
representation, as an easy verification of the Iwahori-Hecke relations
show. This gives us the four representations $U_{S}^{\pm},U_{S}^{\pm}$.
\end{proof}
Let us explain the connection between the $H_{0}$-representation
$\left(\pi_{\theta}^{0},V_{\theta}\right)$ and the $H_{\phi}$-representation
$\left(\pi_{\theta}^{\phi},U_{\theta}\right)$. Recall that we defined
in Definition \ref{def:Full_Hecke_Algebra} a larger algebra $H$
containing both $H_{0}$ and $H_{\phi}$ as subalgebras (with different
units). 

Now, $H\cdot Id_{H_{\phi}}$ is a right $H_{\phi}$-module and a left
$H$-representation, so (see the Preliminaries) given a representation
$\left(\pi,V\right)$ of $H_{\phi}$, $H\cdot Id_{H_{\phi}}\otimes_{H_{\phi}}V$
is an $H$-representation. To simplify notations we write it as $H\otimes_{H_{\phi}}V$,
which is well defined if we extend the tensor notation to modules
in which $Id_{H_{\phi}}$ does not act as the identity.
\begin{prop}
\label{prop:induction and restriction}We can induce an $H_{\phi}$-representation
$\left(\pi,U\right)$ to an $H$-representation $\left(\text{ind}_{H_{\phi}}^{H}\pi,\text{ind}_{H_{\phi}}^{H}U\right)$
by choosing $\text{ind}_{H_{\phi}}^{H}U=H\otimes_{H_{\phi}}U$. We
can restrict an $H$-representation $\left(\pi,W\right)$ to an $H_{\phi}$-representation
$\left(\text{res}_{H_{\phi}}^{H}\pi,\text{res}_{H_{\phi}}^{H}W\right)$
by choosing $\text{res}_{H_{\phi}}^{H}W=\pi\left(Id_{H_{\phi}}\right)W$.
Induction and restriction define a bijection between isomorphism classes
of irreducible finite dimensional $H_{\phi}$-representations and
$H$-representations.
\end{prop}

\begin{proof}
To simplify notations in the proof, we let $H$ and $H_{\phi}$ act
directly on the representation spaces without mentioning $\pi$.

The idea behind the proof is the decomposition 
\begin{equation}
H=H_{\phi}\oplus H_{\phi}u\oplus dH_{\phi}\oplus dH_{\phi}u,\label{eq:Decomposition}
\end{equation}
where $H_{\phi}u=\left\{ hu\colon h\in H_{\phi}\right\} $, and similarly
for $dH_{\phi}$, $dH_{\phi}u$. This decomposition follows immediately
from the relations stated in Proposition \ref{prop:H-speciication}.

We start by showing that $U$ and $\mbox{res}_{H_{\phi}}^{H}\mbox{ind}_{H_{\phi}}^{H}U$
are isomorphic $H_{\phi}$-representations, and $W$ and $\mbox{ind}_{H_{\phi}}^{H}\mbox{res}_{H_{\phi}}^{H}W$
are isomorphic $H$-representations. It proves that induction and
restriction define a bijection between isomorphism classes of $H_{\phi}$-representations
and $H$-representations. 

From Decomposition \ref{eq:Decomposition}, we have that $H\cdot Id_{H_{\phi}}=H_{\phi}\oplus dH_{\phi}$,
so 
\begin{align*}
\mbox{ind}_{H_{\phi}}^{H}U & =H\otimes_{H_{\phi}}U=\left\{ d\otimes v:v\in U\right\} \oplus\left\{ Id_{H_{\phi}}\otimes v:v\in U\right\} \\
 & =\left(d\otimes U\right)\oplus\left(Id_{H_{\phi}}\otimes U\right).
\end{align*}

On the first factor $Id_{H_{\phi}}$ acts by $0$, so 
\begin{align*}
\mbox{res}_{H_{\phi}}^{H}\mbox{ind}_{H_{\phi}}^{H}U & =Id_{H_{\phi}}\cdot H\otimes_{H_{\phi}}U=\left(Id_{H_{\phi}}d\otimes U\right)\oplus\left(Id_{H_{\phi}}\otimes U\right)\\
 & =Id_{H_{\phi}}\otimes U.
\end{align*}
It is immediate that $v\to Id_{H_{\phi}}\otimes v$ is an isomorphism
of the $H_{\phi}$-representations $U$ and $\mbox{res}_{H_{\phi}}^{H}\mbox{ind}_{H_{\phi}}^{H}U=Id_{H_{\phi}}\otimes U$.

Given an $H$-representation $W$, from $Id_{H}=Id_{H_{\phi}}+Id_{H_{0}}=Id_{H_{\phi}}+\left(q+1\right)^{-1}du$,
we have
\[
W=Id_{H}W=Id_{H_{\phi}}W\oplus Id_{H_{0}}W=Id_{H_{\phi}}W\oplus duW.
\]
We have $uW=Id_{H_{\phi}}W$, so $duW=dId_{H_{\phi}}W=dW$, so $W=Id_{H_{\phi}}W\oplus dW$.
Therefore,
\begin{align*}
\mbox{ind}_{H_{\phi}}^{H}\mbox{res}_{H_{\phi}}^{H}W & =H\otimes_{H_{\phi}}Id_{H_{\phi}}W=\\
 & =\left(Id_{H_{\phi}}\otimes_{H_{\phi}}Id_{H_{\phi}}W\right)\oplus\left(d\otimes_{H_{\phi}}Id_{H_{\phi}}W\right).
\end{align*}
The vector spaces $Id_{H_{\phi}}W$ and $Id_{H_{\phi}}\otimes_{H_{\phi}}Id_{H_{\phi}}W$
are naturally isomorphic as $H_{\phi}$-representations. Similarly
$dV=dId_{H_{\phi}}W$ and $d\otimes_{H_{\phi}}Id_{H_{\phi}}W$ are
naturally isomorphic as vector spaces. Therefore, $W$ and $\mbox{ind}_{H_{\phi}}^{H}\mbox{res}_{H_{\phi}}^{H}W$
are isomorphic as vector spaces. It is also easy to see that the actions
of $d,u$ and the elements of $H_{\phi}$ on the two spaces agree,
so by Proposition \ref{prop:H-speciication}, $W$ and $\mbox{ind}_{H_{\phi}}^{H}\mbox{res}_{H_{\phi}}^{H}W$
are isomorphic $H$-representations.

It is also obvious that if $W$ is a finite dimensional $H$-representation
then $\mbox{res}_{H_{\phi}}^{H}W$ is finite dimensional, and if $U$
is a finite dimensional $H_{\phi}$-representation then $\mbox{ind}_{H_{\phi}}^{H}U=\left(d\otimes U\right)\oplus\left(Id_{H_{\phi}}\otimes U\right)$
is finite dimensional. 

It also follows from our explicit description above that if $U_{1}\subset U$
is a proper non-trivial $H_{\phi}$-subrepresentation then $\mbox{ind}_{H_{\phi}}^{H}U_{1}\subset\mbox{ind}_{H_{\phi}}^{H}U$
is a proper non-trivial $H$-representation. Similarly, if $W_{1}\subset W$
is a proper non-trivial $H$-representation then $\mbox{res}_{H_{\phi}}^{H}W_{1}\subset\mbox{res}_{H_{\phi}}^{H}W$
is a proper non-trivial $H_{\phi}$-representation.

Finally, if $U$ is irreducible, then $\mbox{ind}_{H_{\phi}}^{H}U$
is irreducible, otherwise it has a proper subrepresentation, whose
restriction is a proper subrepresentation of $U$. Similarly, if $W$
is irreducible then $\mbox{res}_{H_{\phi}}^{H}W$ is irreducible.
It concludes the proof.
\end{proof}
A generalized version of Proposition \ref{prop:induction and restriction}
appears in \cite[Section 10]{kamber2016lpcomplex}. 

To make Proposition \ref{prop:induction and restriction} more explicit,
we extend the embedding $\Phi\colon H_{\phi}\to M_{2\times2}\left(\mathbb{C}\left[\theta,\theta^{-1}\right]\right)$
to an embedding $\Phi'\colon H\to M_{3\times3}\left(\mathbb{C}\left[\theta,\theta^{-1}\right]\right)$,
satisfying
\[
\Phi'(h_{w})=\left(\begin{array}{cc}
\Phi(h_{w}) & \begin{array}{c}
0\\
0
\end{array}\\
\begin{array}{cc}
0 & 0\end{array} & 0
\end{array}\right)
\]
for $w\in\hat{W}$, and 
\[
\Phi'\left(u\right)=\left(\begin{array}{ccc}
0 & 0 & 1\\
0 & 0 & \theta\\
0 & 0 & 0
\end{array}\right),\,\Phi'\left(d\right)=\left(\begin{array}{ccc}
0 & 0 & 0\\
0 & 0 & 0\\
1 & \tilde{\theta} & 0
\end{array}\right).
\]

Once again, this embedding can be derived from the presentation of
the algebra using generators and relations.

Notice that using this description, 
\[
\Phi'(A)=\Phi'\left(dh_{\tau}u\right)=\left(\begin{array}{ccc}
0 & 0 & 0\\
0 & 0 & 0\\
1 & \tilde{\theta} & 0
\end{array}\right)\left(\begin{array}{ccc}
0 & 1 & 0\\
1 & 0 & 0\\
0 & 0 & 0
\end{array}\right)\left(\begin{array}{ccc}
0 & 0 & 1\\
0 & 0 & \theta\\
0 & 0 & 0
\end{array}\right)=\left(\begin{array}{ccc}
0 & 0 & 0\\
0 & 0 & 0\\
0 & 0 & \theta+\tilde{\theta}
\end{array}\right).
\]

The reader may verify by calculating $\Phi'\left(A_{m}\right)=\Phi'\left(dh_{\tau}h_{NB}^{m-1}u\right)$
that one recovers the Satake isomorphism of Subsection \ref{subsec:Representations-of_H_0}.
\begin{prop}
We denote by $\left(\pi_{\theta},W_{\theta}\right),\theta\ne\pm1,\pm q$,
$\left(\pi_{T}^{\pm},W_{T}^{\pm}\right),\left(\pi_{S}^{\pm},W_{S}^{\pm}\right)$
the irreducible representations of $H$.

When restricted to an $H_{0}$-representation by $V=Id_{H_{0}}\cdot W$,
the corresponding representations are:

(1) For $\left(\pi_{\theta},W_{\theta}\right),\theta\ne\pm1,\pm q$:
$\left(\pi_{\theta}^{0},V_{\theta}\right)$.

(2) For $\left(\pi_{T}^{\pm},W_{T}^{\pm}\right)$: $\left(\pi_{\pm q}^{0},V_{\pm q}\right)=\left(\pi_{\pm1}^{0},V_{\pm1}\right)$.

(3) For $\left(\pi_{S}^{\pm},W_{S}^{\pm}\right)$: the $0$-representation.
\end{prop}

Note that we recovered all the irreducible $H_{0}$-representation.
\begin{rem}
The theory presented here is a very simple case of the general representation
theory of affine Iwahori-Hecke algebras. See \cite{macdonald1996affine}.

The names of the trivial representation and the Steinberg representation
come from corresponding representations of the automorphism group
of the tree.
\end{rem}

\subsubsection{Unitary Representations}

We want to identify the irreducible unitary representations of $H_{\phi}$.
It is immediate that the one dimensional representations are unitary
so we look at $\left(\pi_{\theta}^{\phi},U_{\theta}\right)$. Unitary
representations $\left(\pi,U\right)$ satisfy the following: 
\begin{enumerate}
\item The adjoint of $\pi\left(h_{NB}\right)=\pi\left(h_{\tau s_{0}}\right)$
is $\pi\left(h_{NB}^{*}\right)=\pi\left(h_{s_{0}\tau}\right)$, and
in unitary representation they have complex conjugate eigenvalues.
The eigenvalues of $h_{NB}=h_{\tau}h_{s_{0}}$ on $U_{\theta}$ are
$\theta$ and $q\theta^{-1}$. The eigenvalues of its adjoint $h_{NB}^{*}=h_{s_{0}\tau}$
are also $\theta$ and $q\theta^{-1}$. Therefore either $\theta=\bar{\theta}$,
i.e.~$\theta$ is real, or $\theta=q\bar{\theta}^{-1}$, i.e.~$\left|\theta\right|^{2}=q$.
\item The eigenvalues of $\pi\left(h_{NB}\right)$ are of absolute value
$\le q$, since $h_{NB}=h_{\tau}h_{s_{0}}$ and the eigenvalues and
therefore the norms of $\pi\left(h_{\tau}\right),\pi\left(h_{s_{0}}\right)$
are bounded by $1,q$. This condition bounds $\theta$ to $1\le\left|\theta\right|\le q$,
and since $U_{\pm1},U_{\pm q}$ are reducible the actual bound is
$1<\left|\theta\right|<q$. 
\end{enumerate}
Summarizing, we proved one direction of the following proposition. 
\begin{prop}
The unitary irreducible representations of $H_{\phi}$ are the following
representations:

1. $\left(\pi_{\theta}^{\phi},U_{\theta}\right)$, for $\left|\theta\right|=q^{\nicefrac{1}{2}}$.

2. $\left(\pi_{\theta}^{\phi},U_{\theta}\right)$, for $\theta$ real
$1<\left|\theta\right|<q$.

3. The one dimensional representations: $\left(\pi_{T}^{\phi,\pm},U_{T}^{\pm}\right)$,
$\left(\pi_{S}^{\phi,\pm},U_{S}^{\pm}\right)$.
\end{prop}

The proposition says that the algebraic definition of a unitary $H_{\phi}$-representation
capture the combinatoric bounds we found in Subsection \ref{subsec:Action-on-Finite-graphs}
on the Satake parameter. This is a very simple case of a general result
of Barbasch and Moy (\cite{barbasch1993reduction}). To complete the
proof we need to prove that $\left(\pi_{\theta}^{\phi},U_{\theta}\right)$
for $\theta$ as in the proposition is indeed unitary. Since we will
not use this part and it is slightly technical, we skip it and refer
the reader to \cite[Section 9]{savin2002lectures}, where a similar
claim is proven in the context of the representation theory of $p$-adic
groups.

\subsection{\label{subsec:Geometric-Realization-1}Geometric Realization and
the $L^{p}$-Spectrum}

Similarly to the vertex Hecke algebra, representations of the Iwahori-Hecke
algebra can be realized on the tree. We will only describe the spherical
model, although there also exists a sectorial model of an irreducible
representation.

Recall that by Lemma \ref{lem:Tree_is_building} and Definition \ref{lem:Tree_is_building}
we have a distance $d\colon E_{T}\times E_{T}\rightarrow\hat{W}$
and that for a given $e_{0}\in E_{T}$ the number of $e\in E_{T}$
with $d(e_{0},e)=w$ is $q^{l(w)}$.
\begin{defn}
Given a representation $\left(\pi,U\right)$ of $H_{\phi}$, $u\in U$
and $\varphi\in U^{\ast}$ we call the function $c_{\varphi,u}\colon H_{\phi}\rightarrow\mathbb{C}$,
$c_{\varphi,u}(h)=\left\langle \varphi,\pi\left(h\right)u\right\rangle $
a \emph{matrix coefficient} of $U$. 

Fix $e_{0}\in E_{T}$. For every matrix coefficient we associate a
\emph{geometric realization} $f_{\varphi,u}^{e_{0}}=f_{\varphi,u}\in\mathbb{C}^{V_{T}}$
given by $f_{\varphi,u}(e)=\frac{1}{q^{l(d(e_{0},e))}}\left\langle \varphi,\pi\left(h_{d(e_{0},e)}\right)u\right\rangle $.

We say that $\left(\pi,U\right)$ is \emph{$p$-finite }if for every
$u\in U$, $\varphi\in U^{\ast}$, $f_{\varphi,u}(v)\in L^{p}(E_{T})$,
or equivalently 
\[
\sum_{w\in\hat{W}}q^{l(w)(1-p)}\left|\left\langle \varphi,\pi\left(h_{w}\right)u\right\rangle \right|^{p}<\infty.
\]

We say that $\left(\pi,U\right)$ is $p$-tempered if it is $p'$-finite
for every $p'>p$.
\end{defn}

A nice feature of working with the Iwahori-Hecke algebra is that $p$-temperedness
is directly related to the eigenvalues of $h_{NB}$. 
\begin{prop}
\label{prop:non-backtracking equiv}Let $\left(\pi,U\right)$ be a
finite dimensional representation of $H_{\phi}$, and let $\rho_{U}(h_{NB})$
be the largest absolute value of an eigenvalue of $\pi\left(h_{NB}\right)$.
Then $\left(\pi,U\right)$ is $p$-tempered if and only if $\rho_{U}(h_{NB})\le q^{\nicefrac{\left(p-1\right)}{p}}$.

Therefore, $\left(\pi_{\theta}^{\phi},U_{\theta}\right)$ is $p$-tempered
if and only if $\max\{\left|\theta\right|,q\left|\theta\right|^{-1}\}\le q^{\nicefrac{\left(p-1\right)}{p}}$,
$\left(\pi_{S}^{\phi,\pm},U_{S}^{\pm}\right)$ is $1$-tempered and
$\left(\pi_{T}^{\phi,\pm},U_{T}^{\pm}\right)$ is not $p$-finite
for any $p<\infty$.
\end{prop}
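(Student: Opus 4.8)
The plan is to reduce the $p$-finiteness condition---a sum over all of $\hat{W}$---to a one-parameter family of estimates governed by the powers $h_{NB}^{m}$, and then to read off the growth of the matrix coefficients $\langle v^{*},h_{NB}^{m}v'\rangle$ from the spectral radius $\rho=\rho_{U}(h_{NB})$. This mirrors the vertex case, where Corollary~\ref{cor:theta to tempered} was deduced from the Growth Lemma~\ref{lem:Growth Lemma}. First I would use the normal form $w=\tau^{\delta_{\tau}}w_{NB}^{m}s_{1}^{\delta_{1}}$ from Lemma~\ref{lem:Coxeter_group_structure}, together with $l(w)=m+\delta_{1}$ and $h_{w}=h_{\tau}^{\delta_{\tau}}h_{NB}^{m}h_{s_{1}}^{\delta_{1}}$, to rewrite the defining series. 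Up to the harmless bounded factor $q^{\delta_{1}(1-p)}$, it splits according to the four choices of $(\delta_{\tau},\delta_{1})\in\{0,1\}^{2}$ into finitely many series
\[
\sum_{m\ge0}q^{m(1-p)}\left|\langle v^{*},h_{NB}^{m}v'\rangle\right|^{p},
\]
where $v'=h_{s_{1}}^{\delta_{1}}u\in U$ and $v^{*}$ is the functional $x\mapsto\langle u^{*},h_{\tau}^{\delta_{\tau}}x\rangle$. Since $h_{\tau},h_{s_{1}}$ are fixed operators on the finite-dimensional space $U$, the $p$-finiteness of $U$ is equivalent to the convergence of every such series.

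For the upper bound I would invoke the Jordan decomposition of $h_{NB}$: every matrix coefficient satisfies $|\langle v^{*},h_{NB}^{m}v'\rangle|\le C(1+m)^{d}\rho^{m}$, where $d$ is one less than the largest Jordan block size. The series $\sum_{m}(1+m)^{dp}(q^{1-p}\rho^{p})^{m}$ then converges exactly when $q^{1-p}\rho^{p}<1$, i.e. $\rho<q^{(p-1)/p}$. For the matching lower bound I would pick a genuine eigenvector $v'$ of $h_{NB}$ for an eigenvalue $\mu$ with $|\mu|=\rho$ (which exists over $\mathbb{C}$) and a functional $v^{*}$ with $\langle v^{*},v'\rangle\ne0$; then $|\langle v^{*},h_{NB}^{m}v'\rangle|=\rho^{m}|\langle v^{*},v'\rangle|$ \emph{exactly}, so the series behaves like $\sum_{m}(q^{1-p}\rho^{p})^{m}$ and diverges whenever $\rho\ge q^{(p-1)/p}$. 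Hence $U$ is $p$-finite if and only if $\rho<q^{(p-1)/p}$, and unwinding $p$-temperedness as $p'$-finiteness for all $p'>p$---using that $q^{(p'-1)/p'}$ is strictly increasing with limit $q^{(p-1)/p}$ as $p'\to p^{+}$---upgrades this to $\rho\le q^{(p-1)/p}$, as claimed.

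The explicit cases then follow by reading the eigenvalues of $h_{NB}$ off the embedding $\Phi$. On $U_{\theta}$ they are $\theta$ and $\tilde{\theta}=q\theta^{-1}$, giving $\rho=\max\{|\theta|,q|\theta|^{-1}\}$; on the Steinberg representations $U_{S}^{\pm}$ one has $h_{NB}=h_{\tau}h_{s_{0}}$ acting by $\mp1$, so $\rho=1$, which equals $q^{(p-1)/p}$ at $p=1$, whence $U_{S}^{\pm}$ is $1$-tempered; and on the trivial representations $U_{T}^{\pm}$ the element $h_{NB}$ acts by $\pm q$, so $\rho=q$, which never satisfies $q<q^{(p-1)/p}$, so $U_{T}^{\pm}$ is not $p$-finite for any finite $p$.

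I expect the main difficulty to be purely bookkeeping: checking that the polynomial Jordan corrections do not move the threshold, and keeping straight that $p$-finiteness carries a strict inequality while $p$-temperedness yields the non-strict one. Notably, the lower bound is far cleaner here than in the vertex case---a single eigenvector of $h_{NB}$ already produces coefficients of exact modulus $\rho^{m}$, so no oscillation analysis of the kind in Lemma~\ref{lem:Growth Lemma}(2) is needed. The one genuinely representation-theoretic input is that $h_{NB}$, unlike the self-adjoint $A$ of the vertex theory, need not be diagonalizable; but finite-dimensionality guarantees both the Jordan bound above and the existence of a maximal-modulus eigenvector, which is all the argument requires.
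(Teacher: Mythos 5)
Your proposal is correct and follows essentially the same route as the paper: reduce via the normal form $w=\tau^{\delta_{\tau}}w_{NB}^{m}s_{1}^{\delta_{1}}$ to the convergence of $\sum_{m}q^{(1-p)m}\left|\left\langle u^{*},h_{NB}^{m}u\right\rangle \right|^{p}$, then get divergence from a maximal-modulus eigenvector and convergence from $\limsup_{m}\left|\left\langle u^{*},h_{NB}^{m}u\right\rangle \right|^{1/m}\le\rho(h_{NB})$ (which the paper cites as ``the theory of matrix norms'' and you justify via Jordan form). Your handling of the strict-versus-nonstrict threshold and of the explicit cases $U_{\theta}$, $U_{S}^{\pm}$, $U_{T}^{\pm}$ matches the intended argument.
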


\begin{proof}
Every $w\in W$ can be written uniquely as $w=\tau^{\delta_{\tau}}w_{NB}^{m}s_{1}^{\delta_{1}}$,
for $\delta_{\tau},\delta_{1}\in\{0,1\}$ and $m\ge0$. Thus, $p$-finiteness
is equivalent to

\[
\sum_{\delta_{\tau},\delta_{1}\in\{0,1\}}q^{\delta_{1}(1-p)}\sum_{m\ge0}q^{(1-p)m}\left|\left\langle h_{\tau}^{\delta_{\tau}}\varphi,\pi\left(h_{NB}^{m}\right)h_{s_{1}}^{\delta_{1}}u\right\rangle \right|^{p}<\infty
\]
for every $u\in U$, $\varphi\in U^{\ast}$.

This is reduced to the convergence of $\sum_{m\ge0}q^{(1-p)m}\left|\left\langle \varphi,\pi\left(h_{NB}\right)^{m}u\right\rangle \right|^{p}$
for every $u\in U$, $\varphi\in U^{\ast}$. If $u$ is an eigenvector
of $\pi\left(h_{NB}\right)$ with eigenvalue $\theta$ with $\left|\theta\right|\ge q^{\nicefrac{\left(p-1\right)}{p}}$
and $\left\langle \varphi,u\right\rangle \ne0$, then the series diverges.
For the other direction, the theory of matrix norms says that for
every $u\in U$ and $\varphi\in U^{*}$, $\limsup_{m}\left|\left\langle \varphi,\pi\left(h_{NB}\right)^{m}u\right\rangle \right|^{1/m}\le\rho_{U}(h_{NB})$,
which shows that if $\rho_{U}(h_{NB})<q^{\nicefrac{\left(p-1\right)}{p}}$
the series converges.
\end{proof}
As with representations of $H_{0}$, geometric realizations allow
us to consider every irreducible representation of $H_{\phi}$ as
a subrepresentation of $\mathbb{\mathbb{C}}^{E_{T}}$. 

The definition can be extended to $H$ and agrees with the corresponding
definition of $p$-temperedness of $H_{0}$-representations for irreducible
$H$-representations whose restriction to $H_{0}$ is non-zero.

The arguments of Subsection \ref{subsec:L_p-Spectrum-of-Hecke} can
be extended to the following theorem:
\begin{thm}
\label{thm:Main-Theorem for graphs}Let $V$ be a finite dimensional
representation of $H$. Let $p\ge2$. The following are equivalent:

1. The eigenvalues of every $h\in H$ are contained in the spectrum
of $h$ on $L^{p}(V_{T}\oplus E_{T})$. 

2. $V$ is $p$-tempered.

Specifically, the eigenvalues of $h_{NB}$ on $L^{p}(E_{T})$ are
$\{\pm1\}\cup\left\{ \theta\in\mathbb{C}\backslash\{0\}:\max\{\left|\theta\right|,q\left|\theta\right|^{-1}\}<q^{\nicefrac{\left(p-1\right)}{p}}\right\} $
and the approximate point spectrum of $h_{NB}$ on $L^{p}(E_{T})$
is $\{\pm1\}\cup\left\{ \theta\in\mathbb{C}\backslash\{0\}:\max\{\left|\theta\right|,q\left|\theta\right|^{-1}\}\le q^{\nicefrac{\left(p-1\right)}{p}}\right\} $.
\end{thm}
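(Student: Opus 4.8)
The theorem has two distinct pieces: the general equivalence (1) $\Leftrightarrow$ (2) for all $h\in H$, and the explicit description of the spectrum of $h_{NB}$ on $L^p(E_T)$. My plan is to attack these by faithfully transporting the machinery of Section \ref{sec:L_p-Spectrum-of-Hecke} from the vertex setting to the directed-edge setting, where the role of $A$ is now played by $h_{NB}$ and the role of the spherical average $\rho_{v_0}$ is played by the edge-averaging operator implicit in the geometric realization $f^{e_0}_{u,u^*}$. The entire argument should split into an ``easy direction'' built from the geometric realization, and a ``hard direction'' built from an operator-norm bound analogous to Proposition \ref{Kesten's}.

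\textbf{The two directions for the equivalence.} For (2) $\Rightarrow$ (1), I would run the analog of Proposition \ref{prop:Temperd derives wea containment}. If $V$ is $p$-tempered, then by Proposition \ref{prop:non-backtracking equiv} every eigenvalue $\theta$ of $h_{NB}$ on $V$ satisfies $\max\{|\theta|,q|\theta|^{-1}\}\le q^{(p-1)/p}$, and the geometric realization $f_{u,u^*}\in\cap_{p'>p}L^{p'}(E_T)$ provides an honest eigenvector for each $h\in H$, placing $h(\theta)$ among the eigenvalues on $L^{p'}$ and, via the damping trick $f^\epsilon(e)=f(e)(1-\epsilon)^{l(d(e_0,e))}$, in the approximate point spectrum on $L^p$. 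The damping estimate is the same telescoping computation as in the vertex case: $h_{NB}$ shifts $l(d(e_0,\cdot))$ by exactly one, so $|1-(1-\epsilon)^{\pm1}|\le 2\epsilon$ holds verbatim, and Lemma \ref{sum of numbers lemma} controls the sum over the (at most $q$) preimages. For (1) $\Rightarrow$ (2), I would argue contrapositively: if $V$ is not $p$-tempered then by Proposition \ref{prop:non-backtracking equiv} some eigenvalue $\theta$ of $h_{NB}$ has $|\theta|>q^{(p-1)/p}$, and a growth estimate for $\|h_{NB}^m\|$ on $L^p(E_T)$ (the edge analog of Lemma \ref{lem:Growth Lemma}(2) combined with Proposition \ref{Kesten's}) shows $|\theta|$ exceeds the spectral radius, so $h(\theta)$ cannot lie in the $L^p$-spectrum.

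\textbf{The norm bound, which is the crux.} The main obstacle is establishing the edge analog of Proposition \ref{Kesten's}: that the norm of $h_{NB}^m$ on $L^p(E_T)$ equals $q^{m(p-1)/p}$ (and more generally computing $\|h_w\|_p$). The proof strategy transfers cleanly: fix an infinite ray $R$, and decompose each $h_w$ according to how many of the $q^{l(w)}$ target edges lie ``toward'' versus ``away from'' the sector, exactly as $A_k$ was split into $h_0+\cdots+h_k$. Each piece is a biregular bipartite adjacency operator $X_0=E_T\times\{0\}\to X_1=E_T\times\{1\}$, and Lemma \ref{sum of numbers lemma-1} gives the sharp bound $K_0^{1/p}K_1^{(p-1)/p}$. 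The delicate bookkeeping is the branching data for $h_{NB}^m$: a non-backtracking walk of length $m$ reaching an edge at relative depth $j$ involves $K_0,K_1$ powers of $q$ that I must read off from the $\hat W$-structure, and I would verify that summing these pieces reproduces $\Phi(h_{NB}^m)$'s entries; since $\Phi(h_{NB}^m)$ is upper-triangular with diagonal $\theta^m,\tilde\theta^m$, the clean spectral answer for $h_{NB}$ (eigenvalues the \emph{open} region, approximate point spectrum the \emph{closed} region, together with the one-dimensional Steinberg contributions $\pm1$) follows by the same interior-versus-boundary dichotomy as in the corollary proving Theorem \ref{thm:Lp bound}. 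The boundary points fail to be eigenvalues by the same $\rho$-averaging argument. I expect the one genuinely new subtlety to be tracking the $\{\pm1\}$ coming from the Steinberg representations $U_S^\pm$, which by Proposition \ref{prop:non-backtracking equiv} are $1$-tempered and hence contribute eigenvalues $\pm1$ to every $L^p$, $p\ge2$.
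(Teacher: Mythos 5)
Your overall architecture is exactly what the paper intends: the paper gives no written proof of Theorem \ref{thm:Main-Theorem for graphs} beyond the remark that the arguments of Section \ref{sec:L_p-Spectrum-of-Hecke} extend, and your two-direction split (geometric realization plus the damping trick for the containment direction; a ray decomposition of $h_w$ into biregular pieces controlled by Lemma \ref{sum of numbers lemma-1} for the norm bound) is the correct elaboration of that remark. Two small imprecisions: the norm of $h_{NB}^{m}$ on $L^{p}(E_{T})$ is not exactly $q^{m(p-1)/p}$ but only bounded by it up to a factor polynomial in $m$ (matching the off-diagonal entry of $\Phi(h_{NB}^{m})$), which is harmless for the spectral-radius argument; and $h_{NB}$ shifts $l(d(e_{0},\cdot))$ by an element of $\{-1,0,+1\}$ rather than by exactly one (the product $h_{w}h_{s_{0}}$ produces a length-preserving term when the length drops), which again does not damage the estimate $\left|1-(1-\epsilon)^{j}\right|\le2\epsilon$.

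The genuine gap is in your final step, where you invoke ``the same interior-versus-boundary dichotomy'' to pin down the spectrum of $h_{NB}$. In the vertex case the eigenvalue $A(\theta)=\theta+q\theta^{-1}$ is invariant under $\theta\leftrightarrow q\theta^{-1}$, so the paper could assume W.L.O.G.\ that $\left|\theta\right|\ge q\left|\theta\right|^{-1}$ and the norm bound on $A_{k}$ finished the exclusion. For $h_{NB}$ the eigenvalue is $\theta$ itself, this W.L.O.G.\ is unavailable, and the norm bound on $h_{NB}^{m}$ only excludes $\left|\theta\right|>q^{(p-1)/p}$. You still must rule out the entire punctured region $0<\left|\theta\right|<q^{1/p}$, $\theta\ne\pm1$ (nonempty for every $p\ge2$), and no estimate on $\left\Vert h_{NB}^{m}\right\Vert _{p}$ can do this, since $\pm1$ are honest $L^{p}$-eigenvalues of modulus $1<q^{1/p}$ coming from the Steinberg realizations. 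The fix is to upgrade your boundary argument to the whole region: given $0\ne f\in L^{p}(E_{T})$ with $h_{NB}f=\theta f$, the relation $h_{NB}h_{\tau}=qh_{\tau}h_{NB}^{-1}+(q-1)$ shows that the span of $f$ and $h_{\tau}f$ is an at most two-dimensional $H_{\phi}$-module (a quotient of $U_{\theta}$ when $\theta\ne\pm1,\pm q$, on which $h_{NB}$ also has the eigenvalue $q\theta^{-1}$); averaging over spheres about a well-chosen base edge $e_{0}$ with $f(e_{0})\ne0$ produces a nonzero geometric realization of this module lying in $L^{p}(E_{T})$, and the $p$-finiteness computation of Proposition \ref{prop:non-backtracking equiv} then forces $\max\{\left|\theta\right|,q\left|\theta\right|^{-1}\}\le q^{(p-1)/p}$ unless the module degenerates to a Steinberg quotient, which is exactly where the exceptional eigenvalues $\pm1$ enter. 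Without this step the claimed description of the eigenvalue set is not established.
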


\subsection{\label{subsec:L_p-Expander-Theorem}The $L^{p}$-Expander Theorem}

We summarize the discussion above by the following theorem. For simplicity,
we look at a finite non-bipartite graph $X$, and denote $L_{0}^{2}(V_{X})=\left\{ f\in L^{2}(V_{X}):\sum_{v\in V_{X}}f(v)=0\right\} $
and $L_{0}^{2}(E_{X})=\left\{ f\in L^{2}(E_{X}):\sum_{e\in E_{X}}f(e)=0\right\} $.
\begin{thm}
\label{thm:Full Theorem}Let $X$ be a finite, connected, non-bipartite,
$\left(q+1\right)$-regular graph. For $p\ge2$, the following are
equivalent:

1. Every eigenvalue $\lambda$ of $A_{X}$ on $L_{0}^{2}(V_{X})$
satisfies $\left|\lambda\right|\le q^{\nicefrac{1}{p}}+q^{\nicefrac{\left(p-1\right)}{p}}$.

2. The only representations appearing in the decomposition of the
$H_{0}$-action on $L^{2}(V_{X})$ are $\left(\pi_{\theta}^{0},V_{\theta}\right)$
with $\max\{\left|\theta\right|,q\left|\theta\right|^{-1}\}\le q^{\nicefrac{\left(p-1\right)}{p}}$
and $\left(\pi_{q}^{0},V_{q}\right)$.

3. The only representations appearing in the decomposition of the
$H_{\phi}$-action on $L^{2}(E_{X})$ are $\left(\pi_{\theta}^{\phi},U_{\theta}\right)$
with $\max\{\left|\theta\right|,q\left|\theta\right|^{-1}\}\le q^{\nicefrac{\left(p-1\right)}{p}}$,
$\left(\pi_{S}^{\phi,\pm},U_{S}^{\pm}\right)$ and $\left(\pi_{T}^{\phi,\pm},U_{T}\right)$.

4. The $H_{0}$-representation $\left(\pi_{X},L_{0}^{2}(V_{X})\right)$
is $p$-tempered.

5. The eigenvalues of $\pi_{X}\left(h\right)$ for every $h\in H$
on $L_{0}^{2}(V_{X})\oplus L_{0}^{2}(E_{X})$ are contained in the
spectrum of $h$ on $L^{p}(V_{T}\oplus E_{T})$. 

6. For every $k$, every eigenvalue $\lambda_{k}$ of $\pi_{X}\left(A_{k}\right)$
on $L_{0}^{2}(V_{X})$ satisfies $\left|\lambda_{k}\right|\le A_{k}(q^{\nicefrac{\left(p-1\right)}{p}})\le(k+1)q^{\nicefrac{k\left(p-1\right)}{p}}$.

7. Every eigenvalue $\lambda$ of Hashimoto's non-backtracking operator
$\pi_{X}\left(h_{NB}\right)$ on $L_{0}^{2}(E_{X})$ satisfies $\left|\lambda\right|\le q^{\nicefrac{\left(p-1\right)}{p}}$.
\end{thm}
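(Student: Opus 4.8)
The plan is to establish Theorem \ref{thm:Full Theorem} by proving a cycle of implications among the seven conditions, leaning on the structural results already developed. The governing principle is that all seven conditions are really statements about the Satake/non-backtracking parameters $\theta$ appearing in the decomposition of $L^{2}(V_X)$ and $L^{2}(E_X)$, so the strategy is to reduce each condition to a uniform bound $\max\{|\theta|,q|\theta|^{-1}\}\le q^{(p-1)/p}$ and chain them together.

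First I would handle the equivalences that are essentially already proven. The equivalence $1\Leftrightarrow 2$ is exactly the translation of the eigenvalue bound on $A$ into the Satake parameter via $\lambda=\theta+q\theta^{-1}$, using the computation in Section \ref{sec:Action-on-Finite-graphs} that $|\lambda|\le q^{1/p}+q^{(p-1)/p}$ corresponds to $\max\{|\theta|,q|\theta|^{-1}\}\le q^{(p-1)/p}$; here I must remember to include the trivial $V_q$ coming from the constant function, which lies outside $L_0^2(V_X)$. The equivalence $2\Leftrightarrow 4$ is Corollary \ref{cor:theta to tempered} applied to the finite-dimensional decomposition of $L_0^2(V_X)$. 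The equivalence $4\Leftrightarrow 5$ is precisely Theorem \ref{thm:Main-Theorem for graphs} applied to the $H$-representation on $L_0^2(V_X)\oplus L_0^2(E_X)$, once I check that $p$-temperedness of the $H_0$-part is equivalent to $p$-temperedness of the full $H$-representation, which holds because induction/restriction (Proposition \ref{prop:induction and restriction}) matches the parameters and the Steinberg pieces $U_S^{\pm}$ are $1$-tempered hence harmless.

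Next I would connect the edge side. The equivalence $2\Leftrightarrow 3$ comes from the bijection of Proposition \ref{prop:induction and restriction} between irreducible $H_\phi$-representations and irreducible $H$-representations: each $V_\theta$ appearing in $L^2(V_X)$ corresponds to a $U_\theta$ in $L^2(E_X)$ with the same $\theta$, the trivial $V_q$ corresponds to $U_T$, and the extra Steinberg factors $U_S^{\pm}$ are the edge-representations whose restriction to $H_0$ vanishes (so they carry no vertex data). The equivalence $3\Leftrightarrow 7$ is then Proposition \ref{prop:non-backtracking equiv}: the largest eigenvalue of $h_{NB}$ on $U_\theta$ is $\max\{|\theta|,q|\theta|^{-1}\}$, so the bound $|\lambda|\le q^{(p-1)/p}$ on $L_0^2(E_X)$ is exactly the temperedness condition on the $\theta$'s, with the eigenvalues $\pm 1$ on $U_S^{\pm}$ automatically satisfying the bound since $1\le q^{(p-1)/p}$. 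Finally, $1\Leftrightarrow 6$ follows from the Satake isomorphism (Corollary \ref{cor:Twisted-Satake}): the eigenvalue of $A_k$ on $V_\theta$ is $A_k(\theta)$, and the monotonicity estimate $|A_k(\theta)|\le A_k(q^{(p-1)/p})\le (k+1)q^{k(p-1)/p}$ from Lemma \ref{lem:Growth Lemma} and Proposition \ref{Kesten's} shows the $A_k$-bound is equivalent to the bound on $\theta$, the subtlety being that one direction requires knowing $A_k(\theta)$ is monotone in $|\theta|$ on the relevant range.

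The main obstacle I anticipate is the bookkeeping of the trivial and Steinberg representations across the vertex/edge correspondence, rather than any single hard estimate. Since $L_0^2(V_X)$ and $L_0^2(E_X)$ are defined by removing the constants, I must verify that exactly the trivial representations are excised on each side while the Steinberg pieces $U_S^{\pm}$ remain in $L_0^2(E_X)$ and contribute the eigenvalues $\pm 1$ to $h_{NB}$ but nothing to the vertex operators; getting the dictionary between which irreducibles survive in which space exactly right is where the proof is most likely to require care. The second delicate point is the non-strict versus strict inequalities: conditions $1$--$7$ are stated with $\le$, matching the approximate point spectrum rather than the eigenvalue set of Theorem \ref{thm:Main-Theorem for graphs}, so I would phrase condition $5$ in terms of containment in the spectrum (not the eigenvalues) to keep the boundary case $\max\{|\theta|,q|\theta|^{-1}\}=q^{(p-1)/p}$ consistent throughout the cycle.
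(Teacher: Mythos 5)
Your proposal is correct and follows essentially the same route as the paper, which states Theorem \ref{thm:Full Theorem} without a separate proof, presenting it as a summary of the results already established (Corollary \ref{cor:theta to tempered}, Propositions \ref{prop:induction and restriction}, \ref{prop:non-backtracking equiv}, \ref{Kesten's}, and Theorem \ref{thm:Main-Theorem for graphs}); your chain of implications is precisely the intended assembly of those ingredients, including the correct bookkeeping of the trivial and Steinberg constituents. The only simplification worth noting is that for $1\Rightarrow 6$ you can bypass the monotonicity of $A_k(\theta)$ in $\left|\theta\right|$ by going through condition 5 and quoting the norm bound $\left\Vert A_{k}\right\Vert _{p}\le A_{k}(q^{(p-1)/p})$ of Proposition \ref{Kesten's} directly.
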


\subsection{\label{subsec:Bipartite-Biregular-Graphs}Bipartite Biregular Graphs}

In this subsection we show how to extend the previous results to biregular
graphs.

Let $\tilde{T}$ be a biregular tree, i.e.~each vertex $v$ is colored
by $t(v)\in\left\{ 0,1\right\} $, each edge contains one vertex of
type $0$ and one vertex of type $1$, and each vertex of type $i\in\left\{ 0,1\right\} $
is contained in $q_{i}+1$ edges. As the case $q_{0}=q_{1}$ was covered
above, we assume $q_{1}>q_{0}\ge1$.

Following Proposition \ref{prop:Hecke_commutes} and Proposition \ref{prop:Hecke_commutes-1},
we define:
\begin{defn}
The vertex Hecke algebra $\tilde{H}_{0}$ is the algebra of row and
column finite operators acting on $\mathbb{C}^{V_{\tilde{T}}}$ and
commuting with automorphisms of $\tilde{T}$. 
\end{defn}

Since we have two types of vertices and an automorphism never sends
a vertex of one type to the other (since $q_{1}>q_{0})$, the algebra
$\tilde{H}_{0}$ is slightly more complicated than the algebra $H_{0}$
of the regular case. In particular, $\tilde{H}_{0}$ is not generated
by the adjacency operator $A$ and is not commutative. However, the
algebra still contains the operators $A_{m}\colon\mathbb{C}^{V_{\tilde{T}}}\to\mathbb{C}^{V_{\tilde{T}}}$.
Each operator $A_{m}$ sums a function on a sphere of radios $m$
on $\tilde{T}$ around each vertex, which is of approximate size $\left(\sqrt{q_{0}q_{q}}\right)^{m}$
(the exact size depends on the type of the vertex).

As for operators acting on edges, since each edge has a natural ``direction''
from vertices of type 0 to vertices of type 1, it is simpler to consider
the set $\tilde{E}_{\tilde{T}}$ of non-directed edges. Give each
non-directed edge $e=\{x,y\}\in\tilde{E}_{\tilde{T}}$ a direction
from its 0-vertex $x$ to its 1-vertex $y$. We therefore write $e=\left(x,y\right)$.

Biregular trees also have an Iwahori-Hecke algebra attached to them,
which was used by Hashimoto in \cite{hashimoto1989zeta}. This time
it is natural to use the standard Iwahori-Hecke algebra and not the
extended one (see Remark \ref{rem:history remark}). The shortest
way to define this algebra is as follows:
\begin{defn}
The Iwahori-Hecke (or edge-Hecke) algebra $\tilde{H}_{\phi}$ is the
algebra of row and column finite operators acting on $\mathbb{C}^{\tilde{E}_{\tilde{T}}}$
and commuting with automorphisms of $\tilde{T}$.
\end{defn}

The description of $\tilde{H}_{\phi}$ is very similar to the regular
case. In particular, we have the operators $h_{s_{0}}$ and $h_{s_{1}}$
(but not $h_{\tau}$), defined as in Subsection \ref{subsec:The-Directed-Edge-Hecke}.
The basis of $\tilde{H}_{\phi}$ consists of the operators $h_{w}$,
$w\in W$, where $W$ is the (non-extended) Coxeter group $W=\left\langle s_{0},s_{1}:s_{0}^{2}=s_{1}^{2}=1\right\rangle $
(see Remark \ref{rem:history remark}). The Iwahori-Hecke relations
are:
\begin{eqnarray*}
h_{s_{0}}^{2} & = & (q_{0}-1)h_{s_{0}}+q_{0}Id\\
h_{s_{1}}^{2} & = & (q_{1}-1)h_{s_{1}}+q_{1}Id\\
h_{w}h_{s} & = & h_{ws}\,\,\,\,\,\,\,\,\,\,\,\,\,\,\,\,\,\,\,\,\mbox{if}\,\,l(ws)>l(w).
\end{eqnarray*}

Hashimoto's non-backtracking operator $h_{NB}$ is not part of our
algebra. However, we do have the non-backtracking operator $\tilde{h}_{NB}=h_{s_{1}s_{0}}=h_{s_{1}}h_{s_{0}}$
(which corresponds to $h_{NB}^{2}$ in the regular graph case).
\begin{defn}
The full graph Hecke algebra $\tilde{H}$ is the algebra of row and
column finite operators acting on $\mathbb{C}^{\tilde{E}_{\tilde{T}}}\oplus C^{V_{\tilde{T}}}$
and commuting with tree automorphisms.
\end{defn}

In this case it is useful to define the raising operators $u_{0},u_{1}\colon C^{V_{\tilde{T}}}\to\mathbb{C}^{\tilde{E}_{\tilde{T}}}$,
and the lowering operators $d_{0},d_{1}\colon\mathbb{C}^{\tilde{E}_{\tilde{T}}}\to C^{V_{\tilde{T}}}$
by:
\[
\begin{array}{c}
u_{0}f(x,y)=f(x),\,u_{1}(x,y)=f(y)\\
d_{0}f(x)=\begin{cases}
\sum_{y\sim x}f(x,y) & x\text{ of type 0}\\
0 & \text{otherwise }
\end{cases},\,d_{1}f(y)=\begin{cases}
\sum_{x\sim y}f(x,y) & y\text{ of type 1}\\
0 & \text{otherwise }
\end{cases}
\end{array}
\]

We extend $u_{0},u_{1},d_{0},d_{1}$ as usual to operators in $\tilde{H}$.
The relations satisfied are:
\begin{align*}
u_{0}d_{0} & =h_{s_{0}}+Id_{\tilde{H}_{\phi}},\,u_{1}d_{1}=h_{s_{1}}+Id_{\tilde{H}_{\phi}}\\
A & =d_{0}u_{1}+d_{1}u_{0}.
\end{align*}

The algebra $\tilde{H}$ and the raising and lowering operators allow
us to transfer results between $\tilde{H}_{0}$ and $\tilde{H}_{\phi}$.
In addition, The restriction operator $\tilde{W}\rightarrow\tilde{U}=Id_{\tilde{H}_{\phi}}\tilde{W}$
from an $\tilde{H}$-representation $\tilde{W}$ to an $\tilde{H}_{\phi}$-representation
$\tilde{U}$ defines a bijection between equivalence classes of irreducible
representations of $\tilde{H}$ and $\tilde{H}_{\phi}$, as in Proposition
\ref{prop:induction and restriction}. 

The $L^{p}$-theory remains essentially the same. It is summarized
in the following theorem:
\begin{thm}
\label{thm:Bipartite-Lp-theorem}Let $\left(\pi,\tilde{W}\right)$
be a finite dimensional representation of $\tilde{H}$. Let $p\ge2$.
The following are equivalent:

1. Each eigenvalue $\lambda$ of $\pi\left(h\right)$ for every $h\in\tilde{H}$
on $\tilde{W}$ is contained in the approximate point spectrum of
$h$ on $L^{p}(V_{\tilde{T}}\oplus\tilde{E}_{\tilde{T}})$. 

2. The representation $\left(\pi,\tilde{W}\right)$ is $p$-tempered,
i.e.~all of its geometric realizations are in $L^{p+\epsilon}(V_{\tilde{T}}\oplus\tilde{E}_{\tilde{T}})$
for every $\epsilon>0$.

3. Each eigenvalue $\theta'$ of $\pi\left(\tilde{h}_{NB}\right)$
on $Id_{\tilde{H}_{\phi}}\tilde{W}$ satisfies $\left|\theta'\right|\le\left|q_{0}q_{1}\right|^{\nicefrac{\left(p-1\right)}{p}}$.
\end{thm}

Since this theorem is a special case of the generalized theory in
\cite{kamber2016lpcomplex} (specifically, Theorem 1.6 and Corollary
1.12), we only give its sketch. The fact that (2) derives (1) is as
in Proposition \ref{prop:Temperd derives wea containment}. The equivalence
between (2) and (3) is as in Proposition \ref{prop:non-backtracking equiv}.
Finally, to prove that (1) derives (3) one generalizes Proposition
\ref{prop:Kesten's}. It also gives a more qualitative part, given
as follows:
\begin{prop}
\label{prop:Am-norm-Bipartite}The norm of $A_{m}\in\tilde{H}_{0}$
on $L^{p}(V_{\tilde{T}})$ (and therefore on every $p$-tempered unitary
representation) is bounded by $(m+1)q_{1}\left(q_{0}q_{1}\right)^{(\nicefrac{m}{2})\nicefrac{\left(p-1\right)}{p}}$.
\end{prop}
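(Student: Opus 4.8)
The plan is to imitate the proof of Proposition \ref{Kesten's}: decompose $A_m$ along a horocyclic coordinate and estimate each piece by the biregular bipartite bound of Lemma \ref{sum of numbers lemma-1}. First I would fix an infinite non-backtracking ray $R$ in $\tilde{T}$ with end $\xi$ and define the relative distance $c:V_{\tilde{T}}\to\mathbb{Z}$ exactly as in Section \ref{sec:Geometric-Realization}. Each vertex $v$ then has a unique down-neighbor (with $c$ smaller by $1$) and $q_{t(v)}$ up-neighbors (with $c$ larger by $1$), where $t(v)\in\{0,1\}$ is the type of $v$, and each up-step reverses the type.

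Next I would write $A_m=\sum_{i=0}^{m}h_i$, where $h_i f(v)=\sum_{u}f(u)$ runs over the vertices $u$ with $d(v,u)=m$ and $c(u)-c(v)=m-2i$. Since $\tilde{T}$ is a tree, the geodesic from $v$ to such a $u$ first descends $i$ steps to a confluence $w$ and then ascends $m-i$ steps, so $h_i$ is the adjacency operator of a bipartite graph $G_i$ joining each $v$ (as output) to its family of such $u$ (as input). Writing $N(j)$ for the maximal number of length-$j$ ascending paths issuing from a vertex — which by the alternating degree pattern equals $(q_0q_1)^{j/2}$ for $j$ even and $(q_0q_1)^{(j-1)/2}q_{t}$ for $j$ odd — the in-degree of an output $v$ in $G_i$ is at most $N(m-i)$ and the out-degree of an input $u$ is at most $N(i)$; the constraint of not back-tracking into the branch one came from only lowers these counts.

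Now I would invoke the biregular bound. Since these degrees depend on the type and on the parity of the number of ascending steps, $G_i$ is not literally biregular, so Lemma \ref{sum of numbers lemma-1} does not apply verbatim; however its one-line estimate (via Lemma \ref{sum of numbers lemma}), with each constant degree replaced by its supremum, still yields $\|h_i\|_p\le N(i)^{1/p}N(m-i)^{(p-1)/p}$. Inserting the uniform bound $N(j)\le q_1(q_0q_1)^{(j-1)/2}$ — which is where the hypothesis $q_1>q_0$ enters and produces the crude factor $q_1$ — makes the exponent of $q_0q_1$ an affine function of $i$, hence for $p\ge2$ bounded by its value at the endpoint $i=0$; one reads off $\|h_i\|_p\le q_1(q_0q_1)^{(m/2)(p-1)/p}$ for every $i$. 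Summing the $m+1$ pieces gives the stated bound on $\|A_m\|_p$, and the fact that it controls every $p$-tempered representation follows from the temperedness framework exactly as in Section \ref{sec:L_p-Spectrum-of-Hecke}.

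The main obstacle, and essentially the only place the argument departs from the regular case of Proposition \ref{Kesten's}, is precisely this type- and parity-dependence of the degrees: in a biregular tree the number of ascending paths from a vertex oscillates with a factor of $\sqrt{q_1/q_0}$ depending on whether one starts at a type-$0$ or type-$1$ vertex and on the parity of the path length, so $G_i$ is not biregular. I expect the resolution to be routine rather than deep — one replaces $K_0,K_1$ in Lemma \ref{sum of numbers lemma-1} by their suprema $N(i),N(m-i)$ and absorbs the leftover $\sqrt{q_1/q_0}$ into the loose constant $q_1$ — but identifying the correct uniform estimate $N(j)\le q_1(q_0q_1)^{(j-1)/2}$ and checking the endpoint maximization for all $p\ge2$ is the step that requires care.
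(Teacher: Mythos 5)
Your argument is correct and is precisely the generalization of Proposition \ref{Kesten's} that the paper indicates but does not write out (the proof is deferred to \cite{kamber2016lpcomplex}). The horocyclic decomposition $A_m=\sum_{i=0}^m h_i$, the supremum version of Lemma \ref{sum of numbers lemma-1}, the uniform bound $N(j)\le q_1(q_0q_1)^{(j-1)/2}$, and the endpoint maximization at $i=0$ for $p\ge2$ all check out and in fact yield the slightly stronger constant $\sqrt{q_1/q_0}$ in place of $q_1$, so the stated bound follows.
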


The proposition states that up to $O(m)$, $A_{m}$ is bounded by
the $\nicefrac{\left(p-1\right)}{p}$-th power of the number of vertices
it sums.
\begin{defn}
A $\left(q_{0}+1,q_{1}+1\right)$-biregular graph $X$ is \emph{an
$L^{p}$-expander} if every non-trivial $\tilde{H}$-subrepresentation
of $\left(\pi_{X},L^{2}\left(\tilde{E}_{X}\oplus V_{X}\right)\right)$
is $p$-tempered.
\end{defn}

The theorems above are general and do not require the classification
of $\tilde{H}$-representations. However, understanding the exact
connection between the eigenvalues of the adjacency operator $A$
and $p$-temperedness does require the classification. To classify
irreducible representations, we embed $\tilde{H}$ in $M_{4\times4}\left(\mathbb{C}\left[\theta^{\prime},\theta^{\prime-1}\right]\right)$
($\theta^{\prime}$ indeterminate) by%
\begin{comment}
sector is above. $v_{0}$ is between 0 upto 1. $v_{1}$ is below,
between the 1 below and 0. $u_{0}$ is the 0. $u_{1}$ is the 1 below.

\begin{align*}
A & =D_{0}U_{1}+D_{1}U_{0}=\left(\begin{array}{cccc}
0 & 0 & 0 & 0\\
0 & 0 & 0 & 0\\
0 & 0 & 0 & 1+q_{0}\theta^{\prime-1}\\
0 & 0 & q_{1}+\theta & 0
\end{array}\right)\\
L & =D_{0}U_{0}+D_{1}U_{1}-A=\left(\begin{array}{cccc}
0 & 0 & 0 & 0\\
0 & 0 & 0 & 0\\
0 & 0 & 1+q_{0} & 1+q_{0}\theta^{\prime-1}\\
0 & 0 & q_{1}+\theta & 1+q_{1}
\end{array}\right)\\
\theta_{L} & =\left(x-1-q_{0}\right)\left(x-1-q_{1}\right)-\left(q_{0}\theta^{\prime-1}\right)\left(q_{1}+\theta\right)\\
\theta_{L}^{\pm} & =\frac{2+q_{0}+q_{1}\pm\sqrt{\left(2+q_{0}+q_{1}\right)^{2}-4\left(\left(1+q_{0}\right)\left(1+q_{1}\right)+\left(1+\theta^{\prime-1}q_{0}\right)\left(q_{1}+\theta^{\prime}\right)\right)}}{2}
\end{align*}
\\
\end{comment}
\begin{align*}
h_{s_{0}} & \rightarrow\left(\begin{array}{cccc}
0 & q_{0} & 0 & 0\\
1 & q_{0}-1 & 0 & 0\\
0 & 0 & 0 & 0\\
0 & 0 & 0 & 0
\end{array}\right),\,h_{s_{1}}\to\left(\begin{array}{cccc}
q_{1}-1 & \theta^{\prime} & 0 & 0\\
\theta^{\prime-1}q_{1} & 0 & 0 & 0\\
0 & 0 & 0 & 0\\
0 & 0 & 0 & 0
\end{array}\right)\\
\tilde{h}_{NB} & \rightarrow\left(\begin{array}{cccc}
\theta^{\prime-1} & q_{0}\left(q_{1}-1\right)+\theta^{\prime}\left(q_{0}-1\right) & 0 & 0\\
0 & \theta^{\prime-1}q_{0}q_{1} & 0 & 0\\
0 & 0 & 0 & 0\\
0 & 0 & 0 & 0
\end{array}\right)\\
D_{0} & \to\left(\begin{array}{cccc}
0 & 0 & 0 & 0\\
0 & 0 & 0 & 0\\
1 & q_{0} & 0 & 0\\
0 & 0 & 0 & 0
\end{array}\right),\,U_{0}\to\left(\begin{array}{cccc}
0 & 0 & 1 & 0\\
0 & 0 & 1 & 0\\
0 & 0 & 0 & 0\\
0 & 0 & 0 & 0
\end{array}\right)\\
D_{1} & \to\left(\begin{array}{cccc}
0 & 0 & 0 & 0\\
0 & 0 & 0 & 0\\
0 & 0 & 0 & 0\\
q_{1} & \theta^{\prime} & 0 & 0
\end{array}\right),\,U_{1}\to\left(\begin{array}{cccc}
0 & 0 & 0 & 1\\
0 & 0 & 0 & \theta^{\prime-1}\\
0 & 0 & 0 & 0\\
0 & 0 & 0 & 0
\end{array}\right).
\end{align*}

For $0\ne\theta'\in\mathbb{C}$, we denote the resulting 4-dimensional
$\tilde{H}$-representation by $\left(\tilde{\pi}_{\theta'},\tilde{W}_{\theta^{\prime}}\right)$.
Notice that $\theta^{\prime}$ corresponds to an eigenvalue of $\tilde{h}_{NB}$,
so $\tilde{W}_{\theta^{\prime}}$ will be similar to the representation
$W_{\theta^{\prime\nicefrac{1}{2}}}$ of Subsection \ref{subsec:Representation-Theory-of}.

The proposition below gives the classification of unitary $\tilde{H}$-representations
which can be found similarly to Subsection \ref{subsec:Representation-Theory-of}.
For simplicity we omit $\pi$ from the notations. The edge dimension
of a representation $\tilde{W}$ is the dimension of $Id_{\tilde{H}_{\phi}}\tilde{W}$.
The vertex dimension is the dimension of $Id_{\tilde{H}_{0}}\tilde{W}$.
The eigenvalues of $\tilde{h}_{NB}$ are calculated on $Id_{\tilde{H}_{\phi}}\tilde{W}$
and the eigenvalues of $A$ are calculated on $Id_{\tilde{H}_{0}}\tilde{W}$.
A similar classification can be found in \cite{hashimoto1989zeta}.
\begin{prop}
\label{prop:Classification of tree unitary}The unitary representations
of $\tilde{H}$ are the following representations:

1. The representation $\tilde{W}_{\theta^{\prime}}$, for (a) $\left|\theta^{\prime}\right|=\sqrt{q_{0}q_{1}}$
or (b) $\theta^{\prime}$ real with \textup{$1<\theta^{\prime}<q_{0}q_{1}$}
or (c) $\theta^{\prime}$ real with $-q_{1}<\theta^{\prime}<-q_{0}$.
This representation is of dimension 4: vertex dimension 2 and edge
dimension 2. 

The eigenvalues of $\tilde{h}_{NB}$ are $\theta^{\prime},q_{0}q_{1}\theta^{\prime-1}$
and it is $p$-tempered if and only\textbf{ }if $\max\left\{ \left|\theta^{\prime}\right|,q_{0}q_{1}\left|\theta^{\prime}\right|^{-1}\right\} \le\left(q_{0}q_{1}\right)^{\nicefrac{\left(p-1\right)}{p}}$. 

Write $\theta^{\prime}=\sqrt{q_{0}q_{1}}\hat{\theta}$. The eigenvalues
of $A$ are
\[
\pm\sqrt{\left(1+\theta^{\prime-1}q_{0}\right)\left(q_{1}+\theta^{\prime}\right)}=\pm\sqrt{\left(\hat{\theta}^{\nicefrac{1}{2}}\sqrt{q_{1}}+\hat{\theta}^{-\nicefrac{1}{2}}\sqrt{q_{0}}\right)\left(\hat{\theta}^{\nicefrac{1}{2}}\sqrt{q_{0}}+\hat{\theta}^{-\nicefrac{1}{2}}\sqrt{q_{1}}\right)}.
\]

(a) If $\left|\theta^{\prime}\right|=\sqrt{q_{0}q_{1}}$ the representation
is 2-tempered and the eigenvalues $\lambda_{\pm}$ of $A$ are $\lambda_{\pm}=\pm\left|\hat{\theta}^{\nicefrac{1}{2}}\sqrt{q_{1}}+\hat{\theta}^{-\nicefrac{1}{2}}\sqrt{q_{0}}\right|$,
and it holds that $\sqrt{q_{1}}-\sqrt{q_{0}}\le\left|\lambda_{\pm}\right|\le\sqrt{q_{1}}+\sqrt{q_{0}}$. 

(b)+(c) If $\left|\theta^{\prime}\right|\ne\sqrt{q_{0}q_{1}}$ the
representation is not $2$-tempered. (b) For $1<\theta^{\prime}<q_{0}q_{1}$
the eigenvalues $\lambda_{\pm}$ of $A$ satisfy $\sqrt{q_{1}}+\sqrt{q_{0}}<\left|\lambda\right|<\sqrt{\left(1+q_{0}\right)\left(1+q_{1}\right)}$.
(c) For $-q_{1}<\theta^{\prime}<-q_{0}$ the eigenvalues $\lambda_{\pm}$
of $A$ satisfy $0<\left|\lambda\right|<\sqrt{q_{1}}-\sqrt{q_{0}}$.

2. \emph{The Steinberg representation} $\tilde{W_{S}}$: $h_{0},h_{1}$
act by $-1$. This representation is of dimension 1: vertex dimension
0 and edge dimension 1. The eigenvalue of $\tilde{h}_{NB}$ is $1$.
There are no eigenvalues for $A$ since the vertex dimension is 0.
The representation is $1$-tempered

3. \emph{The trivial representation} $\tilde{W}_{T}$: $h_{0},h_{1}$
act by $q$. This representation is of dimension 3: vertex dimension
2 and edge dimension 1. The eigenvalue of $\tilde{h}_{NB}$ is $q_{0}q_{1}$.
The eigenvalues of $A$ are $\pm\sqrt{\left(1+q_{0}\right)\left(1+q_{1}\right)}$.
The representation is $\infty$-tempered.

4. The representations $\tilde{W}^{0}$: $h_{0}$ acts by $q$, $h_{1}$
acts by $-1$. This representation is of total dimension 2: vertex
dimension 1 and edge dimension 1. The eigenvalue of $\tilde{h}_{NB}$
is $-q_{0}$. The eigenvalue of $A$ is 0, with an eigenvector supported
on vertices of type 0. The representation is 2-finite.

5. The representations $\tilde{W}^{1}$: $h_{0}$ acts by $-1$, $h_{1}$
acts by $q_{1}$. This representation is of total dimension 2: vertex
dimension 1 and edge dimension 1. The eigenvalue of $\tilde{h}_{NB}$
is $-q_{1}$. The eigenvalue of $A$ is 0, with an eigenvector supported
on vertices of type 1. The representation is not $2$-tempered.
\end{prop}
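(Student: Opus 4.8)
The plan is to mirror the regular-case treatment of Section \ref{sec:Representation-Theory-of}, in three stages: first classify the irreducible representations of $\tilde{H}$, then extract the necessary conditions for unitarity from the involution $\ast$, and finally compute the eigenvalues of $\tilde{h}_{NB}$ and $A$ together with the temperedness on each surviving family. Using the bijection between irreducible $\tilde{H}$- and $\tilde{H}_{\phi}$-representations, I would first work on the edge side. Diagonalizing $\tilde{h}_{NB}$ on a finite-dimensional $\tilde{H}_{\phi}$-representation and sending an eigenvector with eigenvalue $\theta'$ into $\tilde{U}_{\theta'}$ realizes every irreducible as a subquotient of some $\tilde{U}_{\theta'}$, exactly as in the regular case. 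The reducibility points are located by searching for a common eigenvector of the self-adjoint generators $h_{s_{0}}$ (eigenvalues $q_{0},-1$) and $h_{s_{1}}$ (eigenvalues $q_{1},-1$): a short computation with the $2\times2$ blocks shows these occur precisely at $\theta'\in\{1,q_{0}q_{1},-q_{0},-q_{1}\}$, and that the split-off pieces assemble, after inducing back to $\tilde{H}$, into $\tilde{W}_{T},\tilde{W}_{S},\tilde{W}^{0},\tilde{W}^{1}$.

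For the necessary conditions I would use that $\ast$ sends $h_{w}$ to $h_{w^{-1}}$, so each $h_{s_{i}}$ is self-adjoint and $\tilde{h}_{NB}^{\ast}=h_{s_{0}}h_{s_{1}}$. Since $h_{s_{0}}h_{s_{1}}$ and $h_{s_{1}}h_{s_{0}}$ share a spectrum, while in a unitary representation the spectrum of $\tilde{h}_{NB}^{\ast}$ is the complex conjugate of that of $\tilde{h}_{NB}$, the multiset $\{\theta',q_{0}q_{1}\theta'^{-1}\}$ must be conjugation-invariant; this forces either $\theta'$ real or $\left|\theta'\right|=\sqrt{q_{0}q_{1}}$. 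Self-adjointness of $h_{s_{i}}$ confines its eigenvalues to $\{q_{i},-1\}$ and bounds its norm by $q_{i}$, so $\Vert\tilde{h}_{NB}\Vert\le q_{0}q_{1}$ and hence $1\le\left|\theta'\right|\le q_{0}q_{1}$. The case $\left|\theta'\right|=\sqrt{q_{0}q_{1}}$ yields the tempered (unitary principal series) family (a); the real case must still be cut down to the open intervals $(1,q_{0}q_{1})$ and $(-q_{1},-q_{0})$.

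Pinning down the exact real ranges, and proving that the surviving $\tilde{U}_{\theta'}$ are genuinely unitary, is the main obstacle. For positive $\theta'$ the norm bound together with reducibility at the endpoints already forces $(1,q_{0}q_{1})$, and positive-definiteness of the invariant Hermitian form holds throughout (the complementary series). The delicate point is the negative range: the norm bound only gives $-q_{0}q_{1}\le\theta'\le-1$, whereas the form stays positive definite only on the strictly smaller interval $(-q_{1},-q_{0})$, degenerating exactly at the reducibility points $\theta'=-q_{0},-q_{1}$, where the mixed representations $\tilde{W}^{0},\tilde{W}^{1}$ split off. I would establish this by writing down the Gram matrix of the $\ast$-invariant form on $\tilde{U}_{\theta'}$ and testing its signature, the positive-definite locus being precisely $(-q_{1},-q_{0})$. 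Following the regular case, where the analogous positivity was deferred, one may instead quote \cite{savin2002lectures,barbasch1993reduction}, or \cite{hashimoto1989zeta} for the bipartite setting, for the sufficiency.

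The remaining assertions are routine computations with the embedding. The eigenvalues of $\tilde{h}_{NB}$ are read off as $\theta',q_{0}q_{1}\theta'^{-1}$, or the single boundary value for the edge-one-dimensional pieces, and $p$-temperedness follows from these via the non-backtracking criterion of Proposition \ref{prop:non-backtracking equiv} as generalized in Theorem \ref{Bipartite-Lp-theorem}. For $A=D_{0}U_{1}+D_{1}U_{0}$ I would compute its block on the vertex space from $\Phi'$, obtaining
\[
A|_{V}=\begin{pmatrix}0 & 1+q_{0}\theta'^{-1}\\ q_{1}+\theta' & 0\end{pmatrix},
\]
whose eigenvalues are $\pm\sqrt{(1+q_{0}\theta'^{-1})(q_{1}+\theta')}$; substituting $\theta'=\sqrt{q_{0}q_{1}}\hat{\theta}$ and factoring gives the stated product form, and bounding this expression over each of the ranges (a), (b), (c) yields the claimed intervals for $\left|\lambda_{\pm}\right|$. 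The eigenvalues of $A$ on the special representations follow by the same substitution at $\theta'\in\{1,q_{0}q_{1},-q_{0},-q_{1}\}$.
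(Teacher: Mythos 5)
Your proposal is correct and follows exactly the route the paper intends: the paper gives no written proof of this proposition, saying only that the classification "can be found similarly to Section \ref{sec:Representation-Theory-of}" and citing \cite{hashimoto1989zeta}, and your outline reproduces that regular-case argument (irreducibles as subquotients of $\tilde{U}_{\theta'}$, necessary conditions from the involution and the norm bound, explicit matrix computations for $\tilde{h}_{NB}$ and $A|_{V}$, positivity of the invariant form deferred to the references). You also correctly isolate the one genuinely new subtlety of the biregular case --- that the norm bound alone gives $-q_{0}q_{1}\le\theta'\le-1$ while unitarity holds only on $(-q_{1},-q_{0})$, which must be settled by the signature of the Gram matrix --- a point the paper itself leaves implicit.
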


Let us apply the classification to graphs, i.e.~to the decomposition
of $L^{2}\left(\tilde{E}_{X}\oplus V_{X}\right)$ as a unitary $\tilde{H}$-representation.

The trivial representation $\tilde{W}_{T}$ appears once: it is the
subrepresentation consisting of functions having constant value on
every type of face. By a dimension argument, the Steinberg representation
appears $\chi(X)+1=\left|\tilde{E}_{X}\right|-\left|V_{X}\right|+1$
times in the decomposition. The rest of the representations are either
$\tilde{W}_{\theta^{\prime}}$ or $\tilde{W}^{0}$ or $\tilde{W}^{1}$.
Counting dimensions again, we know that the difference between the
number of appearances of $\tilde{W}^{0}$ and $\tilde{W}^{1}$ is
$\left|V_{X}^{0}\right|-\left|V_{X}^{1}\right|$.

To make a graph an $L^{2}$-expander, we need that:
\begin{enumerate}
\item For each $\tilde{W}_{\theta^{\prime}}$ appearing in the decomposition,
$\theta^{\prime}$ will satisfy $\left|\theta^{\prime}\right|=\sqrt{q_{0}q_{1}}$.
\item The representation $\tilde{W}^{1}$ will not appear in the decomposition. 
\end{enumerate}
The classification prove the final part of Theorem \ref{thm:Bipartite theorem}:
\begin{thm}
A bipartite $\left(q_{0}+1,q_{1}+1\right)$-biregular graph is an
$L^{2}$-expander (i.e.~Ramanujan) if and only if the eigenvalues
of $A$ are $\lambda=0,\pm\sqrt{\left(1+q_{0}\right)\left(1+q_{1}\right)}$
or satisfy $\sqrt{q_{1}}-\sqrt{q_{0}}\le\left|\lambda\right|\le\sqrt{q_{1}}+\sqrt{q_{0}}$,
and the multiplicity of $\lambda=0$ is $\left|V_{X}^{0}\right|-\left|V_{X}^{1}\right|$.
\end{thm}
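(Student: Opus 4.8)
The plan is to read off both the $L^{2}$-expander property and the eigenvalue condition from the decomposition of $L^{2}(\tilde{E}_{X}\oplus V_{X})$ into irreducible unitary $\tilde{H}$-representations, using Proposition \ref{prop:Classification of tree unitary} as a dictionary. By definition $X$ is an $L^{2}$-expander precisely when every non-trivial summand is $2$-tempered. Scanning the classification, the Steinberg representation $\tilde{W}_{S}$ is $1$-tempered and the representations $\tilde{W}^{0}$ are $2$-finite, so these impose no constraint, while the trivial representation $\tilde{W}_{T}$ is excluded as the trivial summand. Hence the $L^{2}$-expander property is equivalent to the two conditions already isolated before the theorem: every $\tilde{W}_{\theta^{\prime}}$ occurring has $|\theta^{\prime}|=\sqrt{q_{0}q_{1}}$, and $\tilde{W}^{1}$ — the unique summand failing to be $2$-tempered — does not occur.

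First I would establish the forward direction. Assuming $X$ is an $L^{2}$-expander, the vertex eigenvalues of $A$ arise only from $\tilde{W}_{T}$, from $\tilde{W}_{\theta^{\prime}}$ with $|\theta^{\prime}|=\sqrt{q_{0}q_{1}}$, and from $\tilde{W}^{0}$ (recall $\tilde{W}_{S}$ has vertex dimension $0$). By the classification these contribute, respectively, $\pm\sqrt{(1+q_{0})(1+q_{1})}$, values with $\sqrt{q_{1}}-\sqrt{q_{0}}\le|\lambda|\le\sqrt{q_{1}}+\sqrt{q_{0}}$, and $\lambda=0$, which is exactly the claimed eigenvalue range. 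For the multiplicity, note that $\lambda=0$ never arises from any $\tilde{W}_{\theta^{\prime}}$: its eigenvalues satisfy $|\lambda|\ge\sqrt{q_{1}}-\sqrt{q_{0}}>0$ when $|\theta^{\prime}|=\sqrt{q_{0}q_{1}}$, and $|\lambda|>0$ in the remaining cases as well. Thus the multiplicity of $0$ equals the number of $\tilde{W}^{0}$-summands plus the number of $\tilde{W}^{1}$-summands; since $\tilde{W}^{1}$ does not occur, this is the number of $\tilde{W}^{0}$-summands, which by the dimension count before the theorem equals $|V_{X}^{0}|-|V_{X}^{1}|$.

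For the reverse direction I would assume the eigenvalue range condition together with the multiplicity condition and recover the two representation-theoretic conditions. If some $\tilde{W}_{\theta^{\prime}}$ with $|\theta^{\prime}|\ne\sqrt{q_{0}q_{1}}$ occurred, its $A$-eigenvalues would fall in one of the open ranges $\sqrt{q_{1}}+\sqrt{q_{0}}<|\lambda|<\sqrt{(1+q_{0})(1+q_{1})}$ or $0<|\lambda|<\sqrt{q_{1}}-\sqrt{q_{0}}$, both disjoint from the allowed set, a contradiction; so the first condition holds. To exclude $\tilde{W}^{1}$, let $n_{0},n_{1}$ be the numbers of $\tilde{W}^{0},\tilde{W}^{1}$ summands. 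As above the multiplicity of $\lambda=0$ is $n_{0}+n_{1}$, which by hypothesis equals $|V_{X}^{0}|-|V_{X}^{1}|$; combining with the identity $n_{0}-n_{1}=|V_{X}^{0}|-|V_{X}^{1}|$ noted before the theorem forces $n_{1}=0$.

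The main obstacle — and the reason a plain eigenvalue bound on $A$ does not suffice — is that the non-tempered representation $\tilde{W}^{1}$ contributes the same $A$-eigenvalue $\lambda=0$ as the harmless tempered representation $\tilde{W}^{0}$, so it is invisible at the level of the spectrum of $A$. Detecting it requires the extra multiplicity bookkeeping, which is exactly where the identity $n_{0}-n_{1}=|V_{X}^{0}|-|V_{X}^{1}|$ enters. This reflects the phenomenon, flagged in the introduction, that for bipartite biregular graphs the connection between $p$-temperedness and the eigenvalues of $A$ is more subtle than a one-sided bound.
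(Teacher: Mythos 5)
Your proposal is correct and follows essentially the same route as the paper: the paper derives the theorem directly from Proposition \ref{prop:Classification of tree unitary} together with the two conditions (every occurring $\tilde{W}_{\theta^{\prime}}$ has $\left|\theta^{\prime}\right|=\sqrt{q_{0}q_{1}}$, and $\tilde{W}^{1}$ does not occur) and the dimension count $n_{0}-n_{1}=\left|V_{X}^{0}\right|-\left|V_{X}^{1}\right|$, exactly as you do. Your write-up merely makes explicit the bookkeeping (in particular why $\lambda=0$ comes only from $\tilde{W}^{0}$ and $\tilde{W}^{1}$) that the paper leaves as "we therefore conclude."
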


\section*{Acknowledgments }

This work was done as part of an M.Sc.~thesis submitted to the Hebrew
University of Jerusalem. The author would like to thank his adviser
Prof.~Alex Lubotzky for his guidance, support and patience regarding
the author's English, and the referee for multiple corrections and
improvements to the text.

\bibliographystyle{amsalpha}
\bibliography{database}

\end{document}